\newcommand{\ud}[0]{\,\mathrm{d}}
\newcommand{\abs}[1]{|#1|}
\newcommand{\Babs}[1]{\Big|#1\Big|}
\newcommand{\Norm}[2]{\|#1\|_{#2}}
\newcommand{\BNorm}[2]{\Big\|#1\Big\|_{#2}}
\newcommand{\pair}[2]{\langle #1,#2 \rangle}
\newcommand{\ave}[1]{\langle #1\rangle}
\newcommand{\BMO}[0]{\operatorname{BMO}}
\renewcommand{\Re}[0]{\operatorname{Re}}
\newcommand{\R}{\mathbb{R}}
\newcommand{\Z}{\mathbb{Z}}
\newcommand{\Exp}[0]{\mathbb{E}}
\newcommand{\eps}[0]{\varepsilon}
\newcommand{\strt}[1]{\rule{0pt}{#1}}
\newcommand{\al}{\alpha}
\newcommand{\be}{\beta}
\newcommand{\ez}{\epsilon}
\newcommand{\la}{\lambda}
\newcommand{\si}{\sigma}
\newcommand{\vp}{\varphi}
\newcommand{\Om}{\Omega}
\def\Xint#1{\mathchoice
  {\XXint\displaystyle\textstyle{#1}}%
  {\XXint\textstyle\scriptstyle{#1}}%
  {\XXint\scriptstyle\scriptscriptstyle{#1}}%
  {\XXint\scriptscriptstyle\scriptscriptstyle{#1}}%
  \!\int}
\def\XXint#1#2#3{{\setbox0=\hbox{$#1{#2#3}{\int}$}
    \vcenter{\hbox{$#2#3$}}\kern-.5\wd0}}
\def\avgint{\Xint-}
\def\cyr{\fontencoding{OT2}\fontfamily{wncyr}\selectfont}
\DeclareTextFontCommand{\textcyr}{\cyr}
\newcommand{\sha}[0]{\textup{\textcyr{SH}}}
\swapnumbers \numberwithin{equation}{section}
\theoremstyle{plain}
\newtheorem{theorem}[equation]{Theorem}
\newtheorem{proposition}[equation]{Proposition}
\newtheorem{corollary}[equation]{Corollary}
\newtheorem{lemma}[equation]{Lemma}
\theoremstyle{definition}
\theoremstyle{remark}
\newtheorem{remark}[equation]{Remark}
\begin{document}

\title{Sharp weighted bounds involving $A_{\infty}$}

\author[T.~Hyt\"onen]{Tuomas Hyt\"onen}
\address{Department of Mathematics and Statistics, P.O.B.~68 (Gustaf H\"all\-str\"omin katu~2b), FI-00014 University of Helsinki, Finland}
\email{tuomas.hytonen@helsinki.fi}

\author[C.~P\'erez]{Carlos P\'erez}
\address{
Departamento de An\'alisis Matem\'atico, Facultad de Matem\'aticas,
Universidad De Sevilla, 41080 Sevilla, Spain
}
\email{carlosperez@us.es}

%\date{\today}

\keywords{Weighted norm inequalities, $A_p$ weights, sharp estimates, maximal function, Calder\'on--Zygmund operators}
\subjclass[2010]{42B25 (Primary); 42B20, 42B35 (Secondary)}
% 42B20 Singular and oscillatory integrals (Calder\'on-Zygmund, etc.) 
% 42B25 Maximal functions, Littlewood-Paley theory
% 42B35 Function spaces arising in harmonic analysis 

\thanks{T. H. was supported by the Academy of Finland, projects 130166, 133264 and 218148.}
\thanks{C. P. was supported by the Spanish Ministry of Science and Innovation grant MTM2009-08934 and by the Junta de Andaluc\'ia, grant FQM-4745}

\maketitle

\begin{abstract}
We improve on several weighted inequalities of recent interest by replacing a part of the $A_p$ bounds by weaker $A_\infty$ estimates
involving Wilson's $A_\infty$ constant
\begin{equation*}
  [w]_{A_\infty}':=\sup_Q\frac{1}{w(Q)}\int_Q M(w\chi_Q).
\end{equation*}

In particular, we show the following improvement of the first author's $A_2$ theorem for Calder\'on--Zygmund operators $T$:
\begin{equation*}
  \Norm{T}{\mathscr{B}(L^2(w))}\leq c_T\,[w]_{A_2}^{1/2}\big([w]_{A_\infty}'+[w^{-1}]_{A_\infty}'\big)^{1/2}.
\end{equation*}
Corresponding $A_{p}$ type results are obtained from a new extrapolation theorem with appropriate mixed $A_p$--$A_\infty$ bounds. This uses new two-weight estimates for the maximal function, which improve on Buckley's classical bound.  

We also derive mixed $A_1$--$A_\infty$ type results  of Lerner, Ombrosi and the second author (Math. Res. Lett. 2009) of the form:     
\begin{equation*}
\begin{split}
\Norm{T}{\mathscr{B}(L^p(w))}
&\le c\,pp'\, [w]_{A_1}^{1/p}([w]_{A_{\infty}}')^{1/p'}, \qquad  1<p<\infty, \\
 \|Tf\|_{L^{1,\infty}(w)} &\le c[w]_{A_1} \log(e+[w]'_{A_{\infty}})\|f\|_{L^1(w)}.
\end{split}
\end{equation*}
An estimate dual to the last one is also found, as well as new bounds for commutators of singular integrals.
\end{abstract}

% commands to create average integrals with horizontal bar

%\newcommand{\note}[1]{\vskip.3cm\fbox{%\parbox{0.93\linewidth}{\footnotesize #1}} \vskip.3cm}

\section{Introduction and statements of the main results}

The weights $w$ for which the usual operators $T$ of Classical Analysis (like the Hardy--Littlewood maximal operator, the Hilbert transform, and general classes of Calder\'on--Zygmund operators) act boundedly on $L^p(w)$ were identified in the 1970's in the works of Muckenhoupt, Hunt, Wheeden, Coifman and Fefferman \cite{CF,HMW,Muckenhoupt:Ap}. This class consists of the Muckenhoupt $A_p$ weights, defined by the condition that (see \cite{GCRdF})
\begin{equation*}
  [w]_{A_p}:= %\sup_Q \avgint_Q w(x)\, \ud x \left(\avgint_Q w(x)^{1-p'}\, \ud x\right)^{p-1}
 \sup_Q\Big(\avgint_Q w\Big)\Big(\avgint_Q w^{-1/(p-1)}\Big)^{p-1}
  <\infty,\qquad p\in(1,\infty),
\end{equation*}
where the supremum is over all cubes in $\R^d$.
Hence it is shown for any of these important operators $T$, whether it is linear or not, that
\begin{equation*}
   \Norm{T}{\mathscr{B}(L^p(w))}:=\sup_{f\neq 0}\frac{\Norm{Tf}{L^p(w)}}{\Norm{f}{L^p(w)}}
\end{equation*}
is finite if and only if $[w]_{A_p}<\infty$.

\

\subsection{The $A_p$ theory} 

\

It is a natural question to look for optimal quantitative bounds of $\Norm{T}{\mathscr{B}(L^p(w))}$ in terms of $[w]_{A_p}$. The first author who studied that question was S. Buckley in his 1992 Ph.D. thesis (see~\cite{Buckley})  who proved 
\begin{equation}\label{pet}
\Norm{M }{\mathscr{B}(L^p(w))}\le
c_{p,d}\,[w]_{A_p}^{\frac{1}{p-1}}
\qquad1<p<\infty,
\end{equation}
where $M$ is the usual Hardy-Littlewood maximal function on ${\mathbb R}^d$. However,  there has been a big impetus on finding such precise dependence for more singular operators after the work of  Astala, Iwaniec and Saksman~\cite{AIS} due to the connections with sharp regularity results for solutions to the Beltrami equation. The key fact was to prove that the operator norm of the Beurling-Ahlfors transform on $L^2(w)$ grows linearly in terms of the $A_2$ constant of $w$. This was
proved by S. Petermichl and A. Volberg~\cite{PV} and by
Petermichl~\cite{Petermichl:Hilbert, Petermichl:Riesz} for the Hilbert transform and
the Riesz transforms. To be precise, in these papers it has been shown that if $T$
is any of these operators, then
\begin{equation}\label{Aptheorem}
\Norm{T}{\mathscr{B}(L^p(w))}\leq c_{p,T}\,[w]_{A_p}^{\max\{1,\frac{1}{p-1}\}}.
\end{equation}
The exponents are optimal in the sense that the exponent cannot be replaced by any smaller quantity.
 It was conjectured then that the same estimate
holds for any Calder\'on-Zygmund operator $T$. This was first proven for special classes of integral transforms \cite{CMP-ERAMS,LPR} and finally, for  general Calder\'on--Zygmund operators by the first author in \cite{Hytonen:A2}, using the main result from \cite{PTV} where it is shown that a weak type estimate is enough to prove the strong type. A direct proof of this result can be found in  \cite{HPTV}.   Other related work are \cite{CMP, HLRSUV,LMPT,Lerner:Ap,Vagharshakyan}. 
 
The main purpose of this paper is to show that these results can be further improved. To do this, we recall the following definitions of the $A_\infty$ constant of a weight $w$:
First, there is the notion introduced by Hru\v{s}\v{c}ev \cite{Hruscev} (see also \cite{GCRdF}):
\begin{equation*}
 [w]_{A_\infty}:=\sup_Q\Big(\avgint_Q w\Big)\exp\Big(\avgint_Q \log w^{-1}\Big),
\end{equation*}
and second, the smaller (as it turns out) quantity, which appeared with a different notation in the work of Wilson 
\cite{Wilson:87,Wilson:89,  Wilson-LNM} and was recently termed the `$A_\infty$ constant' by Lerner \cite[Section~5.5]{Lerner:Ap}:
\begin{equation*}
  [w]_{A_\infty}':=\sup_Q\frac{1}{w(Q)}\int_Q M(w\chi_Q).
\end{equation*}
Observe that 
\begin{equation*}
  c_d[w]_{A_\infty}'\leq [w]_{A_\infty}\leq[w]_{A_p},\qquad\forall p\in[1,\infty),
\end{equation*}
where the second estimate is elementary, and the first will be checked in Proposition~\ref{prop:2Ainftys}. While the constant $[w]_{A_\infty}$ is more widely used in the literature, and also more flexible for our purposes, it is of interest to observe situations, where the smaller constant $[w]_{A_\infty}'$ is sufficient for our estimates, thereby giving a sharper bound.

Now, if $\si=w^{-1/(p-1)}$ is  the dual weight of $w$, we also have $[\si]_{A_\infty}^{p-1}\leq[\si]_{A_{p'}}^{p-1}=[w]_{A_p}$. The point here is that  these quantities can be much smaller for some classes of weights. Our results will be of the form
\begin{equation*}
   \Norm{T}{\mathscr{B}(L^p(w))}\leq c_{p,T}\,\sum [w]_{A_p}^{\alpha(p)}[w]_{A_\infty}^{\beta(p)}[\si]_{A_\infty}^{(p-1)\gamma(p)},
\end{equation*}
sometimes even with the smaller $[\ ]_{A_\infty}'$ constant instead of $[\ ]_{A_\infty}$, 
where the sum is over at most two triplets $(\alpha,\beta,\gamma)$, and the exponents satisfy $\alpha(p)+\beta(p)+\gamma(p)=\tau(p)$, where $\tau(p)$ is the exponent from the earlier sharp results, as it should. However, we will have $\alpha(p)<\tau(p)$, which shows that part of the necessary $A_p$ control may in fact be replaced by weaker $A_\infty$ control.

As in the usual case, a key point to understand our main result for Calder\'on--Zygmund operators is to consider first the case $p=2$.  

\begin{theorem}\label{thm:theA2&Ainfty}
Let $T$ be a Calder\'on--Zygmund operator, $w\in A_2$ and $\si=w^{-1}$. Then there is a constant $c=c_{d,T}$ such that 
\begin{equation}\label{eq:theA2&Ainfty}
\begin{split}
 \Norm{T}{\mathscr{B}(L^2(w))}
 &\leq c[w]_{A_2}^{1/2}\big( [w]_{A_\infty}'+[w^{-1}]_{A_\infty}' \big)^{1/2} \\
 &\leq c[w]_{A_2}^{1/2}\big([w]_{A_\infty}+[w^{-1}]_{A_\infty}\big)^{1/2}.
\end{split}
\end{equation}
\end{theorem}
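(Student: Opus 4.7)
The second inequality in \eqref{eq:theA2&Ainfty} follows from the comparison $[w]_{A_\infty}'\leq c_d[w]_{A_\infty}$ of Proposition~\ref{prop:2Ainftys}, so the task is to prove the first. The plan is to follow the $A_2$-theorem strategy of \cite{Hytonen:A2}, but to replace the final Carleson-type embedding, which costs a full $[w]_{A_2}$ factor, by a sharper Wilson-type analogue that costs only $[\,\cdot\,]_{A_\infty}'$.

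Write $\sigma:=w^{-1}$. The dyadic representation theorem reduces matters to a single dyadic shift $S=S^{m,n}_{\mathscr{D}}$, with polynomial-in-$m+n$ dependence allowed. Viewing the $L^2(w)$ bound on $S$ as the two-weight bound $L^2(\sigma)\to L^2(w)$ for $f\mapsto S(\sigma f)$, the P\'erez--Treil--Volberg theorem reduces the problem to the testing constants
\begin{equation*}
\mathfrak{T}^2:=\sup_R\frac{1}{\sigma(R)}\int_R|S(\sigma\chi_R)|^2 w,\qquad (\mathfrak{T}^*)^2:=\sup_R\frac{1}{w(R)}\int_R|S^*(w\chi_R)|^2\sigma.
\end{equation*}

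To bound $\mathfrak{T}$, fix a cube $R$ and run a principal-cube/stopping-time construction on the $\mathscr{D}$-subcubes of $R$ with thresholds on $\langle\sigma\rangle_Q$, producing a sparse subfamily $\mathcal{F}(R)$ with pairwise-disjoint major subsets $E_Q\subset Q$, $|E_Q|\geq\tfrac12|Q|$, along which $\langle\sigma\rangle_Q$ decays geometrically. The paraproduct structure of the shift yields the pointwise domination
\begin{equation*}
|S(\sigma\chi_R)(x)|\leq c(m+n+1)\sum_{Q\in\mathcal{F}(R)}\langle\sigma\rangle_Q\chi_Q(x),
\end{equation*}
and expanding the square against $w$ with the geometric decay along principal chains bounds its $L^2(w)$ norm squared by $\sum_{Q\in\mathcal{F}(R)}\langle\sigma\rangle_Q^2 w(Q)$. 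I spend one $A_2$ factor via $\langle\sigma\rangle_Q^2 w(Q)=\langle\sigma\rangle_Q\langle w\rangle_Q\cdot\sigma(Q)\leq [w]_{A_2}\sigma(Q)$ and invoke the sparse Wilson-type Carleson bound
\begin{equation*}
\sum_{Q\in\mathcal{F}(R)}\sigma(Q)\leq 2\sum_{Q\in\mathcal{F}(R)}\langle\sigma\rangle_Q|E_Q|\leq 2\int_R M(\sigma\chi_R)\leq 2[\sigma]_{A_\infty}'\,\sigma(R),
\end{equation*}
which is immediate from the disjointness of the $E_Q$ together with the definition of $[\sigma]_{A_\infty}'$. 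Combining gives $\mathfrak{T}\leq c(m+n+1)[w]_{A_2}^{1/2}\big([\sigma]_{A_\infty}'\big)^{1/2}$; by symmetry $\mathfrak{T}^*\leq c(m+n+1)[w]_{A_2}^{1/2}\big([w]_{A_\infty}'\big)^{1/2}$. Summing over complexities $(m,n)$ and averaging over $\mathscr{D}$ yields \eqref{eq:theA2&Ainfty}.

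The main obstacle is the pointwise sparse domination in the third paragraph: for general complexity $(m,n)$ the shift must be decomposed so that a single stopping-cube structure simultaneously tames the paraproduct and collapses the $L^2(w)$ cross-terms produced on expansion. Once this is in hand, the short calculation above shows that Wilson's constant $[\,\cdot\,]_{A_\infty}'$ is precisely the quantity that absorbs one of the two classical $[w]_{A_2}^{1/2}$ factors, delivering the claimed sharper bound.
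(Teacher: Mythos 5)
Your overall architecture coincides with the paper's: dyadic representation theorem, reduction to the two-weight testing conditions via Theorem~\ref{thm:HPTV}, a principal-cube (stopping-time) construction with thresholds on the averages of the testing weight, and finally the Carleson-type packing estimate $\sum_{Q\in\mathcal{F}(R)}\sigma(Q)\leq 2\int_R M(\sigma\chi_R)\leq 2[\sigma]_{A_\infty}'\sigma(R)$, which is exactly Lemma~\ref{lem:princAinfty} and is indeed the one genuinely new ingredient that converts the second $[w]_{A_2}^{1/2}$ of the classical argument into an $A_\infty'$ factor. That part of your proposal is correct and correctly identified as the crux.

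The gap is the step you yourself flag: the pointwise domination
\begin{equation*}
|S(\sigma\chi_R)(x)|\leq c(m+n+1)\sum_{Q\in\mathcal{F}(R)}\langle\sigma\rangle_Q\chi_Q(x)
\end{equation*}
for a shift of general complexity $(m,n)$ is asserted, not proved, and it does not follow ``from the paraproduct structure'' in any routine way; it is a substantial theorem in its own right (pointwise sparse domination of shifts), and nothing in the paper or in the cited literature supplies it. The paper deliberately avoids needing any pointwise bound: on each principal block it uses only the \emph{distributional} exponential-decay estimate \eqref{eq:expEst} from \cite{HPTV}, namely $\sigma(|\sha_{\mathscr{K}^a(S)}(w\chi_Q)|>t\ave{w}_S)\leq Ce^{-ct}\sigma(S)$, which is much weaker than pointwise control by $\ave{w}_S\chi_S$ but suffices after summing the geometric series in $t$. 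A second, related structural difference: because the distributional estimate is normalized by $\ave{w}_S$ rather than by a product of averages, the paper cannot spend the $A_2$ factor cube-by-cube as you do via $\langle\sigma\rangle_Q^2 w(Q)\leq[w]_{A_2}\sigma(Q)$; instead it first splits the cubes into buckets $\mathscr{K}^a$ according to \eqref{eq:fixA2} and recovers $[w]_{A_2}^{1/2}$ from the geometric sum $\sum_{a\leq\log_2[w]_{A_2}}2^{a/2}$. To make your route rigorous you would have to either prove the sparse domination for arbitrary $(m,n)$ (with a single stopping family that works for the paraproduct components as well), or retreat to the distributional estimate and reinstate the $A_2$-bucket decomposition as in the paper.
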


We will prove this by following the approach form \cite{Hytonen:A2,HPTV} to the $A_2$ theorem $\Norm{T}{\mathscr{B}(L^2(w))}\leq c_{T}\,[w]_{A_2}$, and modifying the proof at some critical points. Indeed, the original argument uses the $A_2$ property basically \emph{twice}, each producing the factor $[w]_{A_2}^{1/2}$, and it suffices to observe that only the $A_\infty$ property is actually needed in one of these estimates. 

An interesting consequence of this theorem is the following:  for any fixed Calder\'on--Zygmund operator $T$, we have
\begin{equation}\label{eq:reverseA2false}
  \inf_{w\in A_2}\frac{\Norm{T}{\mathscr{B}(L^2(w))}}{[w]_{A_2}}=0.
\end{equation}
This follows once we describe, in Section~\ref{examples}, a family of weights $w\in A_2$ for which both $[w]_{A_\infty}'$ and $[\si]_{A_\infty}'$ (and even $[w]_{A_\infty}$ and $[\si]_{A_\infty}$) grow slower than $[w]_{A_2}$.
In particular, the ``reverse $A_2$ conjecture'' $[w]_{A_2}\leq c_T\,\Norm{T}{\mathscr{B}(L^2(w))}$ is false.

The result for $p\neq 2$ will be  obtained by means of a new quantitative variant of the Rubio de Francia extrapolation theorem adapted to the $A_\infty$ control, which we discuss further below.

\begin{corollary}\label{cor:theAp&Ainfty}
Let $T$ be a Calder\'on--Zygmund operator and let  $p\in(1,\infty)$.  Then if $w\in A_p$ and $\si=w^{-1/(p-1)}$, we have
\begin{equation}\label{cor:CZlower}
\begin{split}
  \Norm{T}{\mathscr{B}(L^p(w))}
  &\lesssim[w]_{A_p}^{2/p-1/2}\Big([w]_{A_\infty}^{1/2}+[\si]_{A_\infty}^{(p-1)/2}\Big)([\si]_{A_\infty}')^{2/p-1}  \\
   &\lesssim[w]_{A_p}^{2/p}([\si]_{A_\infty}')^{2/p-1}\qquad\text{for }p\in(1,2], \\
\end{split}
\end{equation}
and
\begin{equation}\label{cor:CZupper}
\begin{split}
  \Norm{T}{\mathscr{B}(L^p(w))}
  &\lesssim[w]_{A_p}^{2/p-1/[2(p-1)]}\Big([w]_{A_\infty}^{1/[2(p-1)]}+[\si]_{A_\infty}^{1/2}\Big)([w]_{A_\infty}')^{1-2/p} \\
  &\lesssim[w]_{A_p}^{2/p}([w]_{A_\infty}')^{1-2/p}\qquad\text{for }p\in[2,\infty).
\end{split}
\end{equation}
\end{corollary}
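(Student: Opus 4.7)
The plan is to derive Corollary~\ref{cor:theAp&Ainfty} from the $L^2(w)$ estimate in Theorem~\ref{thm:theA2&Ainfty} via the quantitative mixed $A_p$--$A_\infty$ extrapolation theorem announced in the introduction (to be established elsewhere in the paper). That extrapolation theorem converts a hypothesis of the form
\begin{equation*}
\Norm{T}{\mathscr{B}(L^2(w))}\leq C\,[w]_{A_2}^{1/2}\bigl([w]_{A_\infty}+[w^{-1}]_{A_\infty}\bigr)^{1/2}
\end{equation*}
into an $L^p(w)$ bound whose three-factor structure $[w]_{A_p}^{\alpha(p)}[w]_{A_\infty}^{\beta(p)}[\si]_{A_\infty}^{(p-1)\gamma(p)}$ has exponents summing to $\tau(p)$, accompanied by a leftover factor in the primed $A_\infty$ constant that measures the cost of the underlying Rubio de Francia iteration.

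For $p\in(1,2]$, aiming at~\eqref{cor:CZlower}, I would run the Rubio de Francia algorithm in conjunction with the refined two-weight Hardy--Littlewood maximal function estimate announced in the abstract (which sharpens Buckley's bound~\eqref{pet}). This produces an auxiliary weight $W\in A_2$ to which Theorem~\ref{thm:theA2&Ainfty} is applied; a careful bookkeeping of the resulting exponents, which must satisfy $\alpha+\beta+\gamma=\tau(p)=1/(p-1)$, together with the iteration cost factor $([\si]_{A_\infty}')^{2/p-1}$ produces the first inequality of~\eqref{cor:CZlower}. The key point here is that the \emph{primed}, smaller, constant $[\si]_{A_\infty}'$ (rather than the unprimed $[\si]_{A_\infty}$) appears as the iteration cost.

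For $p\in[2,\infty)$ I would argue by duality: $T^*$ is again a Calder\'on--Zygmund operator, and $\Norm{T}{\mathscr{B}(L^p(w))}=\Norm{T^*}{\mathscr{B}(L^{p'}(\si))}$ with $\si=w^{-1/(p-1)}$, so applying~\eqref{cor:CZlower} to $T^*$ at exponent $p'\in(1,2]$ with $w\leftrightarrow\si$ swapped yields~\eqref{cor:CZupper}; the identity $[\si]_{A_{p'}}=[w]_{A_p}^{1/(p-1)}$ converts the resulting exponents to their final form and explains the mirror-image shape of~\eqref{cor:CZupper} versus~\eqref{cor:CZlower}. The second (cleaner) inequality in each of~\eqref{cor:CZlower} and~\eqref{cor:CZupper} is then a purely algebraic consequence of the comparisons $[w]_{A_\infty}\le[w]_{A_p}$ and $[\si]_{A_\infty}\le[w]_{A_p}^{1/(p-1)}$: both summands in the sum collapse to $[w]_{A_p}^{2/p}$, and only the primed leftover factor survives. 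The main obstacle I anticipate is tracking the exponents through the extrapolation step to ensure that the claimed primed constant --- rather than its larger unprimed counterpart --- indeed appears as the iteration cost, which is precisely what the sharp two-weight maximal function inequality of the paper is designed to deliver.
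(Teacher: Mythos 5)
Your proposal is correct and follows essentially the same route as the paper: the authors deduce \eqref{cor:CZlower} by applying the lower extrapolation theorem (Theorem~\ref{thm:lowerExtrap}) with $r=2$ and $\varphi(x,y,z)=Cx^{1/2}(y+z)^{1/2}$ to Theorem~\ref{thm:theA2&Ainfty}, the iteration cost $(2\Norm{M}{\mathscr{B}(L^p(w))})^{2-p}\lesssim([w]_{A_p}[\si]_{A_\infty}')^{(2-p)/p}$ supplying exactly the factor $([\si]_{A_\infty}')^{2/p-1}$, and they explicitly note that \eqref{cor:CZupper} then follows by the duality $\Norm{T}{\mathscr{B}(L^p(w))}=\Norm{T^*}{\mathscr{B}(L^{p'}(\si))}$ together with $[\si]_{A_{p'}}=[w]_{A_p}^{1/(p-1)}$, just as you propose (they also observe the alternative of using the upper extrapolation theorem directly). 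Your bookkeeping checks out, including the point that the unprimed $A_\infty$ constants must be kept inside $\varphi$ while only the maximal-function cost factor carries the primed constant.
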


Here the simpler forms of the estimates in \eqref{cor:CZlower} and \eqref{cor:CZupper} are almost as good as the more complicated ones, since for many common weights, like power weights, we have $[w]_{A_\infty}+[\si]_{A_\infty}^{p-1}\eqsim[w]_{A_p}$; see Section~\ref{examples}.

These two statements \eqref{cor:CZlower} and \eqref{cor:CZupper} are actually equivalent to each other by using
\begin{equation*}
  \Norm{T}{\mathscr{B}(L^p(w))}=\Norm{T^*}{\mathscr{B}(L^{p'}(\si))}
\end{equation*}
and the fact that $T^*$ is also a Calder\'on--Zygmund operator.

\

\subsection{The maximal function and extrapolation with $A_\infty$ control} 

\

As mentioned above, a key point to understand Corollary \ref{cor:theAp&Ainfty}  is to prove a version  of the quantitative extrapolation theorem adapted to $A_\infty$ control. The proof of this theorem requires to study the corresponding question for the Hardy--Littlewood maximal function, which we first do in a two-weight setting. We need 
a new two-weight constant $B_p[w,\si]$  defined by the functional
\begin{equation}
B_p[w,\si]:=\sup_Q\Big(\avgint_Q w\Big)\Big(\avgint_Q \si\Big)^{p}\exp\Big(\avgint_Q\log\si^{-1}\Big).
\end{equation}
which clearly satisfies
\begin{equation*}
 [w]_{A_p}\leq B_p[w,\si]\leq [w]_{A_p}[\si]_{A_\infty}.
\end{equation*}

\begin{theorem}\label{sharpBuckley} Let $M$ be the Hardy-Littlewood  maximal operator and let $p\in (1,\infty)$. Then we have the estimates
\begin{equation}\label{eq:BuckleyBp}
  \Norm{M (f\si)}{L^p(w)}
  \leq C_d\cdot p'\cdot \big(B_p[w,\si]\big)^{1/p}\Norm{f}{L^p(\si)},
\end{equation}
and
\begin{equation}\label{eq:BuckleyApAinfty}
  \Norm{M (f\si)}{L^p(w)}
  \leq C_d\cdot p'\cdot \big([w]_{A_p}[\si]_{A_\infty}'\big)^{1/p}\Norm{f}{L^p(\si)}.
\end{equation}

\end{theorem}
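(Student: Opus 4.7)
The plan is to follow the classical sparse / principal-cubes approach to Buckley's theorem, with the key point being the verification of appropriate Carleson-type testing conditions matched to the two different right-hand sides of~\eqref{eq:BuckleyBp} and~\eqref{eq:BuckleyApAinfty}. First, the standard one-third trick reduces matters to a dyadic maximal operator $M^{\mathcal{D}}$. Given non-negative $f \in L^p(\sigma)$, I would perform a Calder\'on--Zygmund stopping-time decomposition on the $\sigma$-averages $\langle f\rangle^\sigma_Q := \sigma(Q)^{-1}\int_Q f\sigma$ to produce a sparse family $\mathcal{S}$ with pairwise disjoint major sets $E_Q \subset Q$ satisfying $|E_Q|\geq \tfrac12|Q|$, and the pointwise bound
\begin{equation*}
M^{\mathcal{D}}(f\sigma)(x) \leq C_d \sum_{Q\in\mathcal{S}}\langle f\sigma\rangle_Q\,\chi_{E_Q}(x),\qquad \langle f\sigma\rangle_Q=\langle\sigma\rangle_Q\langle f\rangle^\sigma_Q.
\end{equation*}
Raising to the $p$th power and using disjointness of the $E_Q$ yields $\|M^{\mathcal{D}}(f\sigma)\|_{L^p(w)}^p \lesssim \sum_Q (\langle f\rangle^\sigma_Q)^p\,\alpha_Q$ with $\alpha_Q:=\langle\sigma\rangle_Q^p w(E_Q)$.

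Next, I would apply the dyadic Carleson embedding theorem (with measure $\sigma\,dx$) to the sequence $(\alpha_Q)$. This reduces both estimates to a testing condition of the form
\begin{equation*}
\sum_{Q\in\mathcal{S},\,Q\subset R}\alpha_Q \leq A\,\sigma(R) \qquad \text{for every cube } R,
\end{equation*}
after which Carleson embedding provides $\sum_Q(\langle f\rangle^\sigma_Q)^p\alpha_Q\leq A\,(p')^p\|f\|^p_{L^p(\sigma)}$; taking $p$th roots delivers the claimed form of constant $C_d\cdot p'\cdot A^{1/p}$.

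For~\eqref{eq:BuckleyApAinfty}, the $A_p$ condition provides
\begin{equation*}
\alpha_Q \leq \langle w\rangle_Q\langle\sigma\rangle_Q^{p-1}\cdot\sigma(Q)\cdot\frac{w(E_Q)}{w(Q)} \leq [w]_{A_p}\sigma(Q),
\end{equation*}
and one then invokes the by-now standard consequence of Wilson's definition: for sparse $\mathcal{S}$,
\begin{equation*}
\sum_{Q\in\mathcal{S},\,Q\subset R}\sigma(Q)\leq 2\int_R M^{\mathcal{D}}(\sigma\chi_R)\,dx\leq 2[\sigma]_{A_\infty}'\sigma(R),
\end{equation*}
which comes from $M^{\mathcal{D}}(\sigma\chi_R)(x)\geq \langle\sigma\rangle_Q$ for $x\in E_Q$ together with $|E_Q|\geq|Q|/2$ and the disjointness of the $E_Q$. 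This gives $A\lesssim[w]_{A_p}[\sigma]_{A_\infty}'$, as required. For~\eqref{eq:BuckleyBp}, the $B_p$ condition instead yields $\alpha_Q\leq B_p[w,\sigma]\cdot|Q|\exp(\avgint_Q\log\sigma)$, so one needs the cleaner, $A_\infty$-free inequality
\begin{equation*}
\sum_{Q\in\mathcal{S},\,Q\subset R}|Q|\exp\Bigl(\avgint_Q\log\sigma\Bigr)\leq C_d\,\sigma(R).
\end{equation*}

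The main obstacle is precisely this last inequality: the naive AM--GM bound $|Q|\exp(\avgint_Q\log\sigma)\leq\sigma(Q)$, followed by Wilson's lemma, only yields $C_d[\sigma]_{A_\infty}'\sigma(R)$, which would merely reproduce~\eqref{eq:BuckleyApAinfty} (since $B_p[w,\sigma]\leq[w]_{A_p}[\sigma]_{A_\infty}$) and forfeit the sharpness encoded in the $\exp(\avgint_Q\log\sigma^{-1})$ factor of $B_p[w,\sigma]$. Resolving this obstacle requires a more refined exploitation of the geometric-mean structure, along the lines of a reverse-Jensen / John--Nirenberg type argument for $\log\sigma$ on nested sparse cubes, which is precisely where the $B_p$ functional enters decisively.
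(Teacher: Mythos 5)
Your reduction is correct and matches the paper's strategy: the one-third trick, the sparse domination with disjoint sets $E_Q$ of measure $\abs{E_Q}\geq\tfrac12\abs{Q}$, the dyadic Carleson embedding with respect to $\si\,dx$, and the verification of the testing condition for \eqref{eq:BuckleyApAinfty} via $\sum_{Q\subset R}\si(Q)\leq 2\int_R M(\si\chi_R)\leq 2[\si]_{A_\infty}'\si(R)$ are all exactly what the paper does. So \eqref{eq:BuckleyApAinfty} is fully proved. But for \eqref{eq:BuckleyBp} you stop at the Carleson condition
\begin{equation*}
  \sum_{Q\in\mathcal{S},\,Q\subset R}\abs{Q}\exp\Big(\avgint_Q\log\si\Big)\leq C_d\,\si(R),
\end{equation*}
declare it the main obstacle, and do not prove it. This is a genuine gap, and it is precisely the point of the theorem. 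Moreover, the direction you suggest for closing it (a John--Nirenberg / reverse-Jensen argument for $\log\si$) cannot work as stated: $\si$ is an arbitrary weight in this two-weight theorem, so $\log\si$ need not be in $\BMO$, and no reverse Jensen inequality is available.

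The missing ingredient is much more elementary: the logarithmic (geometric-mean) maximal function $M_0 f:=\sup_Q\chi_Q\exp\big(\avgint_Q\log\abs{f}\big)$ is bounded on $L^1$ --- indeed on every $L^p$, $p>0$ --- for \emph{arbitrary} $f$. This follows from the pointwise identity $M_0 f=(M\abs{f}^{1/q})^q$ (Jensen) together with the $L^q$-boundedness of $M$ for any fixed $q>1$; in the dyadic case one may let $q\to\infty$ to get $\int M_0 f\leq e\int\abs{f}$. Granting this, the Carleson condition is immediate from your own sparse structure: for $x\in E_Q\subseteq Q\subseteq R$ one has $\exp\big(\avgint_Q\log\si\big)\leq M_0(\si\chi_R)(x)$, so by disjointness of the $E_Q$ and $\abs{Q}\leq 2\abs{E_Q}$,
\begin{equation*}
  \sum_{Q\subset R}\abs{Q}\exp\Big(\avgint_Q\log\si\Big)
  \leq 2\sum_{Q\subset R}\int_{E_Q}M_0(\si\chi_R)
  \leq 2\int_R M_0(\si\chi_R)\leq 2e\,\si(R).
\end{equation*}
With this lemma supplied, your argument is complete and coincides with the paper's proof.
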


We refer to Section \ref{two weight section} for the proof and for more information and background about this two-weight estimate for $M$.  By a well-known change-of-weight argument, \eqref{eq:BuckleyApAinfty} implies:

\begin{corollary}\label{CorsharpBuckley}   For $M$ and $p$ as above, and $\si=w^{-1/(p-1)}$, we have
\begin{equation}\label{HLLpAinfty}
  \Norm{M}{\mathscr{B}(L^p(w))}\leq C_d\cdot p'\cdot \big([w]_{A_p}[\si]_{A_\infty}'\big)^{1/p}.
\end{equation}
\end{corollary}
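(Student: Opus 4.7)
The plan is to deduce the one-weight bound directly from \eqref{eq:BuckleyApAinfty} via the standard duality substitution, essentially replacing the input $f$ in the two-weight statement by $g/\sigma$.

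First I would set up the change of variables. Given an arbitrary $g\in L^p(w)$, define $f:=g\sigma^{-1}$. Since $\sigma=w^{-1/(p-1)}$, we have $\sigma^{1-p}=w$, hence
\begin{equation*}
\Norm{f}{L^p(\sigma)}^p
=\int |g|^p\sigma^{-p}\sigma\,dx
=\int |g|^p\sigma^{1-p}\,dx
=\int |g|^p w\,dx
=\Norm{g}{L^p(w)}^p,
\end{equation*}
so $f\mapsto f\sigma$ is an isometric bijection from $L^p(\sigma)$ onto $L^p(w)$.

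Next I would simply apply \eqref{eq:BuckleyApAinfty} to this $f$. Since $Mg=M(f\sigma)$, the two-weight estimate yields
\begin{equation*}
\Norm{Mg}{L^p(w)}
=\Norm{M(f\sigma)}{L^p(w)}
\leq C_d\cdot p'\cdot\big([w]_{A_p}[\sigma]_{A_\infty}'\big)^{1/p}\Norm{f}{L^p(\sigma)}
=C_d\cdot p'\cdot\big([w]_{A_p}[\sigma]_{A_\infty}'\big)^{1/p}\Norm{g}{L^p(w)}.
\end{equation*}
Taking the supremum over $g\neq 0$ gives the claimed operator bound.

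There is essentially no obstacle here: the argument is a one-line substitution once Theorem~\ref{sharpBuckley} is in hand. The only point worth verifying is the identity $\sigma^{1-p}=w$, which is exactly the defining relation between $w$ and its dual weight in the $A_p$ theory, and the fact that the substitution covers all of $L^p(w)$, which is clear from the isometry above.
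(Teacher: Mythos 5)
Your argument is correct and is precisely the ``well-known change-of-weight argument'' (the standard dual-weight trick) that the paper invokes without writing out: the substitution $f=g\sigma^{-1}$, the identity $\sigma^{1-p}=w$, and the isometry $\Norm{f}{L^p(\sigma)}=\Norm{g}{L^p(w)}$ are exactly what is meant there. Nothing is missing.
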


This improves on Buckley's theorem $\Norm{M}{\mathscr{B}(L^p(w))}\leq C_d\cdot p'\cdot [w]_{A_p}^{1/(p-1)}$. Corollary~\ref{CorsharpBuckley}, at least for $p=2$, was also independently discovered by A.~Lerner and S.~Ombrosi \cite{LO:personal}.

\

We now recall the following quantitative version of Rubio de Francia's classical extrapolation theorem due to Dragi\v{c}evi\'c, Grafakos, Pereyra, and Petermichl \cite{DGPP}: If an operator $T$ satisfies
\begin{equation*}
  \Norm{T}{\mathscr{B}(L^r(w))}
  \leq\varphi([w]_{A_r})
\end{equation*}
for a fixed increasing function $\varphi$ and for all $w\in A_r$, then it satisfies a similar estimate for all $p\in(1,\infty)$:
\begin{equation*}
  \Norm{T}{\mathscr{B}(L^p(w))}
  \leq 2\varphi\left(c_{p,r,d}[w]_{A_p}^{\max\{1,(r-1)/(p-1)\}}\right);
\end{equation*}
in particular, $\Norm{T}{\mathscr{B}(L^r(w))}\lesssim[w]_{A_r}^{\tau(r)}$ implies that
\begin{equation*}
  \Norm{T}{\mathscr{B}(L^p(w))}\lesssim[w]_{A_p}^{\tau(r)\max\{1,(r-1)/(p-1)\}}.
\end{equation*}

With our new quantitative estimates involving both $A_p$ and $A_\infty$ control, it seems of interest to extrapolate such bounds as well. Hence we consider weighted estimates of the form
\begin{equation}\label{eq:generalBound}
  \Norm{Tf}{L^r(w)}
  \leq\varphi\left([w]_{A_r}, [w]_{A_{\infty}},[w^{-1/(r-1)}]_{A_{\infty}}^{(r-1)}\right)   \Norm{f}{L^r(w)}
\end{equation}
where $\varphi:[1,\infty)^3\to[0,\infty)$ is an increasing function with respect to each of the variables.
An example is our bound for singular integrals \eqref{thm:theA2&Ainfty}, where 
\begin{equation}\label{eq:examplePhi}
  \varphi(x,y,z)=Cx^{1/2}(y+z)^{1/2}.
\end{equation}
We now aim to extrapolate bounds like \eqref{eq:generalBound} from the given $r\in(1,\infty)$ to other exponents $p\in(1,\infty)$. 

\begin{theorem}[Lower Extrapolation] \label{thm:lowerExtrap}
Suppose that for some $r$ and every $w\in A_r$, an operator $T$ satisfies \eqref{eq:generalBound}.
Then for every $p\in(1,r)$, it satisfies
\begin{equation*}
\begin{split}
  \Norm{Tf}{L^p(w)}
  &\leq 2\varphi\left((2\Norm{M}{\mathscr{B}(L^p(w))})^{r-p}\big([w]_{A_r}, [w]_{A_{\infty}},[w^{-1/(p-1)}]_{A_{\infty}}^{(p-1)}\big)\right)   \Norm{f}{L^p(w)} \\
  &\leq 2\varphi\left((c_d([w]_{A_p}[w^{-1/(p-1)}]_{A_\infty}')^{1/p})^{r-p}\right. \\
    &\qquad\qquad\times \left.\big([w]_{A_p}, [w]_{A_{\infty}},[w^{-1/(p-1)}]_{A_{\infty}}^{(p-1)}\big)\right)   \Norm{f}{L^p(w)}
\end{split}
\end{equation*}
\end{theorem}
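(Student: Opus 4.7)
The plan is to adapt the quantitative Rubio de Francia extrapolation of Dragi\v{c}evi\'c--Grafakos--Pereyra--Petermichl to the present multi-constant setting, using a single application of the Rubio de Francia iteration algorithm in $L^p(w)$ and H\"older's inequality to transfer the $L^r(W)$ estimate back to $L^p(w)$.

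First I would set $N:=\Norm{M}{\mathscr{B}(L^p(w))}$ and apply the Rubio de Francia algorithm to $|f|\in L^p(w)$:
\begin{equation*}
  h := \sum_{k=0}^{\infty}\frac{M^k|f|}{(2N)^k},
\end{equation*}
obtaining $h\ge|f|$, $\Norm{h}{L^p(w)}\le 2\Norm{f}{L^p(w)}$, and $[h]_{A_1}\le 2N$. I would then define the auxiliary weight $W := w\,h^{p-r}$. The crucial technical step is to verify the three estimates
\begin{align*}
  [W]_{A_r} &\le (2N)^{r-p}\,[w]_{A_r},\\
  [W]_{A_{\infty}} &\le (2N)^{r-p}\,[w]_{A_{\infty}},\\
  [W^{-1/(r-1)}]_{A_{\infty}}^{r-1} &\le (2N)^{r-p}\,[w^{-1/(p-1)}]_{A_{\infty}}^{p-1}.
\end{align*}
The second and third are obtained by a logarithmic calculation combining Jensen's inequality with the bound $(\avgint_Q h)\exp(-\avgint_Q\log h)\le[h]_{A_{\infty}}\le[h]_{A_1}\le 2N$. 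The first uses the pointwise $A_1$ bound $h(x)\ge(\avgint_Q h)/[h]_{A_1}$ (to estimate $\avgint_Q W = \avgint_Q w\,h^{p-r}$ from above), combined with a carefully chosen H\"older splitting of $\avgint_Q W^{-1/(r-1)} = \avgint_Q w^{-1/(r-1)}\,h^{(r-p)/(r-1)}$.

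The main estimate is then H\"older's inequality with exponents $r/p$ and $(r/p)' = r/(r-p)$:
\begin{equation*}
  \int|Tf|^p\,w \le \Bigl(\int|Tf|^r\,W\Bigr)^{p/r} \Bigl(\int w^{r/(r-p)}W^{-p/(r-p)}\Bigr)^{(r-p)/r},
\end{equation*}
where a direct computation identifies $w^{r/(r-p)}W^{-p/(r-p)} = w\,h^p$. Applying the hypothesis at level $L^r(W)$ to the first factor, using the pointwise bound $|f|^r h^{p-r}\le|f|^p$ (valid because $h\ge|f|$ and $p<r$) to bound $\int|f|^r\,W\le\int|f|^p\,w$, and using $\int w\,h^p = \Norm{h}{L^p(w)}^p\le(2\Norm{f}{L^p(w)})^p$ to control the second factor, one collects the constants to obtain
\begin{equation*}
  \Norm{Tf}{L^p(w)}\le 2^{(r-p)/r}\,\varphi\bigl([W]_{A_r},[W]_{A_{\infty}},[W^{-1/(r-1)}]_{A_{\infty}}^{r-1}\bigr)\Norm{f}{L^p(w)}.
\end{equation*}
The first stated inequality now follows from the monotonicity of $\varphi$ and the three $W$-estimates above. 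The second stated inequality is obtained from the first by inserting the sharp bound $\Norm{M}{\mathscr{B}(L^p(w))}\le C_d\,p'\,([w]_{A_p}[w^{-1/(p-1)}]_{A_{\infty}}')^{1/p}$ from Corollary~\ref{CorsharpBuckley}, together with the monotonicity $[w]_{A_r}\le[w]_{A_p}$ applied inside $\varphi$.

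I expect the most delicate step to be the $A_r$ bound on $W$: a naive H\"older on $\avgint_Q W^{-1/(r-1)}$ produces $(\avgint_Q w^{-1/(p-1)})^{p-1}$ rather than the sharper $(\avgint_Q w^{-1/(r-1)})^{r-1}$, so the exponents must be arranged so that the $h$-dependent averages coming from $\avgint_Q W$ and $(\avgint_Q W^{-1/(r-1)})^{r-1}$ cancel exactly, leaving the $A_r$ functional of $w$ together with the clean factor $[h]_{A_1}^{r-p}\le(2N)^{r-p}$.
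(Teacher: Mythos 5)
Your overall strategy is exactly the paper's: the Rubio de Francia majorant $h$ with $[h]_{A_1}\le 2\Norm{M}{\mathscr{B}(L^p(w))}$, the auxiliary weight $W=w\,h^{p-r}$, H\"older's inequality to pass between $\Norm{Tf}{L^p(w)}$ and $\Norm{Tf}{L^r(W)}$ (your computation $w^{r/(r-p)}W^{-p/(r-p)}=w\,h^p$ and the bound $\int|f|^rW\le\int|f|^pw$ are both correct and match the paper's, up to the paper's normalization $g=|f|/\Norm{f}{L^p(w)}$), and the three weight-constant estimates for $W$ followed by Corollary~\ref{CorsharpBuckley}. The $A_\infty$ estimates for $W$ and $W^{-1/(r-1)}$ are also as in the paper.

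The one genuine issue is your first claimed estimate, $[W]_{A_r}\le(2N)^{r-p}[w]_{A_r}$. This is not what the method delivers, and the refinement you hope for in your last paragraph is not available: to replace $\big(\avgint_Q w^{-1/(p-1)}\big)^{p-1}$ by $\big(\avgint_Q w^{-1/(r-1)}\big)^{r-1}$ in bounding $\avgint_Q W^{-1/(r-1)}=\avgint_Q w^{-1/(r-1)}h^{(r-p)/(r-1)}$ you would need H\"older with the exponent $1$ on the $w$-factor, i.e.\ an upper bound $\sup_Q h\lesssim\ave{h}_Q$, which is exactly what the $A_1$ condition does \emph{not} provide (it only gives $\inf_Q h\ge\ave{h}_Q/[h]_{A_1}$, which is what you correctly use for $\avgint_Q W$). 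The "naive" H\"older you describe, with exponents $(r-1)/(r-p)$ and $(r-1)/(p-1)$, gives $\ave{W^{-1/(r-1)}}_Q^{r-1}\le\ave{h}_Q^{r-p}\ave{w^{-1/(p-1)}}_Q^{p-1}$; the factors $\ave{h}_Q^{\mp(r-p)}$ cancel against those from $\ave{W}_Q$, and the correct outcome is
\begin{equation*}
  [W]_{A_r}\le[h]_{A_1}^{r-p}\,[w]_{A_p},
\end{equation*}
with the $A_p$ constant of $w$, not the $A_r$ constant. This is precisely what the paper's proof establishes and what its second display requires (the $[w]_{A_r}$ appearing in the first display of the statement is inconsistent with the proof and with the second display, and should be read as $[w]_{A_p}$). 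Since $[w]_{A_r}\le[w]_{A_p}$ for $p<r$, nothing in your final collection of constants changes once you make this correction; in particular your derivation of the second display goes through verbatim.
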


In typical applications, like \eqref{eq:examplePhi}, the function $\varphi$ will have a homogeneity of the form $\varphi(\lambda x,\lambda y,\lambda z)=\lambda^s\varphi(x,y,z)$, and hence the common factor
\begin{equation*}
  (2\Norm{M}{\mathscr{B}(L^p(w))})^{r-p}\leq (c_d([w]_{A_p}[w^{-1/(p-1)}]_{A_\infty}')^{1/p})^{r-p}
\end{equation*}
may be extracted out of $\varphi$.

Observe that the condition \eqref{eq:generalBound} is of course implied by the stronger inequality
\begin{equation*}
  \Norm{Tf}{L^r(w)}
  \leq\varphi\big([w]_{A_r},c_d^{-1}[w]_{A_{\infty}}',(c_d^{-1}[w^{-1/(r-1)}]_{A_{\infty}}')^{(r-1)}\big)\Norm{f}{L^r(w)};
\end{equation*}
however, even if we have this stronger inequality to start with (as is the case with the $A_2$ theorem for Calder\'on--Zygmund operators), we do not know how to exploit it to get a stronger conclusion than what we can derive from \eqref{eq:generalBound}. A related difficulty will be pointed out in the proof. This is why we restrict to the assumption \eqref{eq:generalBound} only.

\begin{theorem}[Upper Extrapolation]  \label{thm:upperExtrap}
Suppose that for some $r$ and every $w\in A_r$, an operator $T$ satisfies \eqref{eq:generalBound}.
Then for every $p\in(r,\infty)$, it satisfies
\begin{equation*}
\begin{split}
 \Norm{Tf}{L^p(w)}
  &\leq 2\varphi\left((2\Norm{M}{\mathscr{B}(L^{p'}(w^{1-p'}))})^{(p-r)/(p-1)}\right.\\
    &\qquad\qquad\times\left.\big([w]_{A_p}^{(r-1)/(p-1)},[w]_{A_\infty}^{(r-1)/(p-1)},
         [w^{-1/(p-1)}]_{A_\infty}^{(r-1)}\big)\right)\Norm{f}{L^p(w)} \\
  &\leq 2\varphi\left(\big(c_d[w]_{A_p}^{1/p}([w]_{A_\infty}')^{1/p'}\big)^{(p-r)/(p-1)}\right.\\
    &\qquad\qquad\times\left.\big([w]_{A_p}^{(r-1)/(p-1)},[w]_{A_\infty}^{(r-1)/(p-1)},
         [w^{-1/(p-1)}]_{A_\infty}^{(r-1)}\big)\right)\Norm{f}{L^p(w)}.
\end{split}
\end{equation*}
\end{theorem}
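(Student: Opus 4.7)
The plan is to follow the duality-based Rubio de Francia extrapolation blueprint, adapted to propagate the mixed $A_p$--$A_\infty$ information. Setting $\sigma := w^{-1/(p-1)} \in A_{p'}$, I dualize
\[
\|Tf\|_{L^p(w)} = \sup_{\|g\|_{L^{p'}(\sigma)}=1,\, g\geq 0} \int |Tf|\,g\,dx,
\]
and run the Rubio de Francia iteration on $L^{p'}(\sigma)$. With $K := 2\|M\|_{\mathscr{B}(L^{p'}(\sigma))}$ and $Rg := \sum_{k \geq 0} M^k g / K^k$, the usual properties $g \leq Rg$, $\|Rg\|_{L^{p'}(\sigma)} \leq 2$ and $[Rg]_{A_1} \leq K$ hold.

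The core of the argument is the choice of interpolation weight $v := w^{\alpha}(Rg)^{\beta}$ with $\alpha := (r-1)/(p-1)$ and $\beta := 1-\alpha = (p-r)/(p-1)$. A direct manipulation of exponents gives the algebraic identity
\[
(Rg)^{r'} v^{1-r'} = (Rg)^{p'} \sigma,
\]
which will drive the closing Hölder step. Estimating $\avgint_Q v$ by Hölder with the conjugate pair $(1/\alpha, 1/\beta)$, and $\avgint_Q v^{1-r'}$ via the pointwise inequality $Rg \geq \avgint_Q Rg/[Rg]_{A_1}$ on $Q$, I obtain $[v]_{A_r} \leq [Rg]_{A_1}^{\beta}[w]_{A_p}^{\alpha} \leq K^{(p-r)/(p-1)}\,[w]_{A_p}^{(r-1)/(p-1)}$. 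Rerunning the same scheme with Jensen's inequality $\exp(\avgint \log u) \leq \avgint u$ in place of Hölder produces the matching bounds $[v]_{A_\infty} \leq K^{\beta}[w]_{A_\infty}^{\alpha}$ and $[v^{1-r'}]_{A_\infty}^{r-1} \leq K^{\beta}[\sigma]_{A_\infty}^{r-1}$.

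To close the estimate I apply Hölder twice. First,
\[
\int |Tf|\,g\,dx \leq \int |Tf|\,Rg\,dx \leq \|Tf\|_{L^r(v)}\,\|Rg\|_{L^{r'}(v^{1-r'})},
\]
and the algebraic identity reduces the last factor to $\|Rg\|_{L^{p'}(\sigma)}^{p'/r'} \leq 2^{p'/r'}$. Then I invoke the hypothesis on the $A_r$ weight $v$ and use Hölder with exponents $(p/r, p/(p-r))$ together with the same identity to bound $\|f\|_{L^r(v)} \leq 2^{p'(p-r)/(pr)}\,\|f\|_{L^p(w)}$. The total numerical constant collapses to exactly $2$ because
\[
\frac{p'}{r'} + \frac{p'(p-r)}{pr} = p'\Big(\frac{r-1}{r} + \frac{p-r}{pr}\Big) = 1.
\]
The second inequality in the statement then follows by inserting the Buckley-type bound $\|M\|_{\mathscr{B}(L^{p'}(\sigma))} \lesssim [w]_{A_p}^{1/p}([w]_{A_\infty}')^{1/p'}$ from Corollary~\ref{CorsharpBuckley}.

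The main obstacle, already flagged by the authors in the remark following Theorem~\ref{thm:lowerExtrap}, is the $A_\infty$ estimate for $v$ and $v^{1-r'}$: the proof genuinely uses the exponential (Hru\v{s}\v{c}ev) form of $[\cdot]_{A_\infty}$ together with Jensen's inequality, and the smaller Wilson constant $[\cdot]_{A_\infty}'$ does not obey an analogous submultiplicative rule under pointwise products of weights, so the extrapolated bound must be phrased in terms of $[\cdot]_{A_\infty}$ rather than $[\cdot]_{A_\infty}'$.
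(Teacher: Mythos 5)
Your proposal is correct and follows essentially the same route as the paper: duality against $L^{p'}(\sigma)$, the Rubio de Francia iteration there, the intermediate weight $v=w^{(r-1)/(p-1)}(Rg)^{(p-r)/(p-1)}$ (which coincides with the paper's $W=(R'h)^{(p-r)/(p-1)}w$ upon setting $g=hw$, $Rg=wR'h$), the same two H\"older applications, and the same $A_1$/Jensen estimates for $[v]_{A_r}$, $[v]_{A_\infty}$ and $[v^{1-r'}]_{A_\infty}^{r-1}$. The only differences are notational, and your concluding remark about why the larger constant $[\,\cdot\,]_{A_\infty}$ must be used matches the authors' own observation.
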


It is immediate to check that Theorems~\ref{thm:lowerExtrap} and \ref{thm:upperExtrap}, in combination with Theorem~\ref{thm:theA2&Ainfty}, give Corollary~\ref{cor:theAp&Ainfty}. In fact, thanks to the mentioned equivalence of the two parts \eqref{cor:CZlower} and \eqref{cor:CZupper} of  Corollary~\ref{cor:theAp&Ainfty}, we would only need one of Theorems~\ref{thm:lowerExtrap} and \ref{thm:upperExtrap} to deduce this corollary.
But for other classes of operators without a self-dual structure, it is useful to have both upper and lower extrapolation results available.

\

\subsection{The $A_1$ theory} 

\

It is an interesting fact that if we assume that the weight satisfy the stronger condition $w\in A_1$, then the estimate \eqref{Aptheorem} can be considerably improved. Indeed, if $T$ is any  Calder\'on--Zygmund operator, then $T$ is of course  bounded on $L^p(w)$, because $A_1\subset A_p$  but with a much better bound, namely
\begin{equation} \label{A1strong(p,p)}
\|T\|_{\mathscr{B}(L^p(w))}\le c\,pp'\, [w]_{A_1},\qquad  1<p<\infty.
\end{equation}
Observe that the dependence on the $A_1$ constant is linear for any $p$ while in the $A_p$ case it is highly nonlinear for $1<p<2$, see \eqref{Aptheorem}.  The result is sharp both in terms of the dependence on $[w]_{A_1}$, and in terms of the dependence on $p$ when taking $w=1$ by the classical theory. This fact was used to get the following endpoint result: 
\begin{equation}\label{LLog}
\|Tf\|_{L^{1,\infty}(w)}\le c[w]_{A_1}\log(e +[w]_{A_{1}})\|f\|_{L^1(w)}.
\end{equation}
See  \cite{LOP3} and also \cite{LOP1} for these results and for more information about the problem.  It was conjectured in \cite{LOP3} that 
the growth of this bound would be linear; however, it  has been recently shown in \cite{NRVV}  that the growth of the bound is worse than linear. It seems  that most probably the $L \log L$  result \eqref{LLog} is the best possible. 

On the other hand, in \cite{LOP2} a sort of ``dual'' estimate to the last bound was found, which is also of interest for related matters:
$$
\Big\| \frac{Tf}{w} \Big\|_{L^{1,\infty}(w)} \le c\, [w]_{A_{1}}\log(e+[w]_{A_1})\,\int_{{\mathbb R}^d}|f|\, \ud x.
$$

In this paper we improve these results following our new quantitative estimates involving this time $A_1$ and $A_\infty$ control. To be precise, we will prove the following new results: 

\begin{theorem}\label{thm:A1strong(p,p)}   Let $T$ be a Calder\'on-Zygmund operator and let $1<p<\infty$. Then
\begin{equation*}
\|T\|_{\mathscr{B}(L^p(w))}\le c\,pp'\, [w]_{A_1}^{1/p}  ([w]_{A_\infty}')^{1/p'}
\end{equation*}
where $c=c(d,T)$.
\end{theorem}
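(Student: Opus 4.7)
The plan is to adapt the Lerner--Ombrosi--P\'erez proof~\cite{LOP3} of the earlier bound \eqref{A1strong(p,p)}, sharpening it by replacing the $A_1$ control with the finer $A_\infty'$ control wherever only the $A_\infty$ character of the weight is actually needed. The final estimate should then split as a product of two separately sharp inequalities, one carrying the $A_1$ information and the other carrying the $A_\infty'$ information.

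The first ingredient is a refined Coifman--Fefferman-type comparison between $T$ and $M$ of the form
\begin{equation*}
\Norm{Tf}{L^p(w)} \le c\, p\, ([w]_{A_\infty}')^{1/p'}\, \Norm{Mf}{L^p(w)}, \qquad w\in A_\infty,
\end{equation*}
whose virtue is that it extracts the $A_\infty$ character of $w$ through the Wilson constant. I would derive this from the sharp reverse H\"older inequality for $A_\infty$ weights, which provides self-improvement at the scale $1+c_d/[w]_{A_\infty}'$, combined with a good-$\lambda$ argument for $T$ against $M$, tracking the dependence of the constants on $p$ and on the reverse H\"older exponent.

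The second ingredient is the sharp Kerman--Torchinsky--Wilson bound for the Hardy--Littlewood maximal function on $A_1$ weights,
\begin{equation*}
\Norm{Mf}{L^p(w)} \le c\, p'\, [w]_{A_1}^{1/p}\, \Norm{f}{L^p(w)},
\end{equation*}
which is classical and matches the spirit of Corollary~\ref{CorsharpBuckley} specialized to $w\in A_1$. Composing the two bounds immediately yields
\begin{equation*}
\Norm{Tf}{L^p(w)} \le c\, pp'\, [w]_{A_1}^{1/p}\, ([w]_{A_\infty}')^{1/p'}\, \Norm{f}{L^p(w)},
\end{equation*}
which is the claim.

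The main obstacle is the first step, where two subtleties have to be addressed simultaneously: getting the exponent of $[w]_{A_\infty}'$ to be exactly $1/p'$ (and not $1/p$, which is what a single application of $A_\infty$ self-improvement would give), and keeping the $p$-dependence linear rather than exponential. Both features appear to require a carefully calibrated iteration of the sharp reverse H\"older inequality tuned to Wilson's constant, inserted into the good-$\lambda$ proof of the classical Coifman--Fefferman inequality; a crude bookkeeping would produce either $([w]_{A_\infty}')^{1}$ or an $O(p^p)$ constant, both of which would spoil the final form of the theorem.
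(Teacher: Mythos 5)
Your overall architecture is genuinely different from the paper's, but it rests on an ingredient that is false. The proposed ``refined Coifman--Fefferman inequality'' $\Norm{Tf}{L^p(w)}\le c\,p\,([w]_{A_\infty}')^{1/p'}\Norm{Mf}{L^p(w)}$ cannot hold: the correct sharp dependence in the Coifman--Fefferman comparison is \emph{linear} in the $A_\infty$ constant, and no calibration of the reverse H\"older iteration will bring the exponent down to $1/p'$. Concretely, take $T=H$ the Hilbert transform, $f=\chi_{(0,1)}$, and $w=t\chi_{(1,1+\delta)}+\chi_{\R\setminus(1,1+\delta)}$ with $t\delta=1$ and $t$ large. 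By Lemma~\ref{lem:wAinftyPrime}, $[w]_{A_\infty}'\le 4\log t$. On $(1,1+\delta)$ one has $Mf\eqsim 1$ while $Hf(x)\ge\tfrac1\pi\log\tfrac{1}{x-1}$, so $\Norm{Hf}{L^p(w)}^p\ge t\int_0^\delta(\log\tfrac1u)^p\ud u\ge t\delta(\log\tfrac1\delta)^p=(\log t)^p$, whereas $\Norm{Mf}{L^p(w)}^p\le t\delta+\Norm{Mf}{L^p(\R)}^p\le C_p$. Thus $\Norm{Hf}{L^p(w)}/\Norm{Mf}{L^p(w)}\gtrsim_p\log t$, which eventually exceeds $c\,p\,(4\log t)^{1/p'}$. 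Moreover, even with the true linear exponent, composing with your second ingredient would only give $[w]_{A_1}^{1/p}([w]_{A_\infty}')^{1}$, which misses the claimed bound for every finite $p$. So the factorization of $T$ through $M$ \emph{in the same weighted space} cannot produce the theorem; this is exactly the obstacle you flagged, and it is not repairable along those lines.

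The paper's proof avoids this by putting the maximal operator on the \emph{weight} rather than on $f$. The engine is Lemma~\ref{keylemma} (from \cite{LOP3}), a two-weight estimate valid for \emph{arbitrary} $w$: $\Norm{Tf}{L^p(w)}\le c\,pp'\,(r')^{1/p'}\Norm{f}{L^p(M_rw)}$, where $M_rw=M(w^r)^{1/r}$; the power $1/p'$ on $r'$ is built into that lemma by duality, not extracted from $A_\infty$ self-improvement. One then chooses $r=r(w)=1+1/(\tau_d[w]_{A_\infty}')$ from the sharp reverse H\"older inequality (Theorem~\ref{thm:SharpRHI}), so that $M_{r(w)}w\le 2Mw$ and $r(w)'\eqsim[w]_{A_\infty}'$, and finally uses $Mw\le[w]_{A_1}w$ to pick up $[w]_{A_1}^{1/p}$. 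Your use of the sharp reverse H\"older inequality and of the $A_1$ property are the right instincts, but they must be routed through the two-weight lemma to land the exponent $1/p'$.
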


We will prove this by following the approach from  \cite{LOP1, LOP3} to \eqref{A1strong(p,p)}, modifying the proof at several points.
In analogy to \eqref{eq:reverseA2false}, Theorem~\ref{thm:A1strong(p,p)} disproves the ``reverse $A_1$ conjecture'' $[w]_{A_1}\leq c_T\,\Norm{T}{\mathscr{B}(L^p(w))}$ for all $p\in (1,\infty)$: considering a family of weights $w\in A_1$ for which  $[w]_{A_\infty}$ grow slower than $[w]_{A_1}$, for any fixed Calder\'on--Zygmund operator $T$, we have (see Section~\ref{examples} for details)
\begin{equation*}
\inf_{w\in A_1}\frac{\Norm{T}{\mathscr{B}(L^p(w))}}{[w]_{A_1}}=0, \quad 1<p<\infty.
\end{equation*}

Finally we will also use the approach from  \cite{LOP3} and  \cite{LOP2} to prove the following theorems respectively.

\begin{theorem}  \label{thm:A1weak(1,1)} Let $T$ be a Calder\'on-Zygmund operator.
Then
\begin{equation*}
\|Tf\|_{L^{1,\infty}(w)}\le c_{d,T}\,[w]_{A_1}\log (e+  [w]_{A_\infty}'  )\|f\|_{L^1(w)}.
\end{equation*}
\end{theorem}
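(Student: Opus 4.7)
The plan is to follow the approach of \cite{LOP3} (which produced the original bound $[w]_{A_1}\log(e+[w]_{A_1})$), but with the sharper strong-type $L^p(w)$ estimate from Theorem~\ref{thm:A1strong(p,p)} substituted for the earlier $\|T\|_{\mathscr{B}(L^p(w))}\lesssim pp'[w]_{A_1}$. The improved logarithmic factor $\log(e+[w]_{A_\infty}')$ will be extracted by choosing $p>1$ in a way driven by $[w]_{A_\infty}'$ rather than by $[w]_{A_1}$. By scaling I may normalize to $\lambda=1$ and, by density, assume $f\in L^1\cap L^1(w)$, then perform the Calder\'on--Zygmund decomposition of $f$ at height $1$ to write $f=g+b$, with $\|g\|_\infty\le c_d$, $b=\sum_j b_j$, $\supp b_j\subset Q_j$, $\int b_j=0$, and the $Q_j$ pairwise disjoint dyadic cubes with $\avgint_{Q_j}|f|>1$.

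For the good part, Chebyshev combined with Theorem~\ref{thm:A1strong(p,p)} gives
\begin{equation*}
  w\{|Tg|>1/2\}\le 2^p\|Tg\|_{L^p(w)}^p\le c\,(pp')^p\,[w]_{A_1}([w]_{A_\infty}')^{p-1}\|g\|_{L^p(w)}^p.
\end{equation*}
Using $\|g\|_\infty\le c_d$ and the standard $L^p(w)$-estimate of the good part built into the Calder\'on--Zygmund construction as in \cite{LOP3}, and then taking the critical exponent $p=1+1/\log(e+[w]_{A_\infty}')\in(1,2]$, I obtain $([w]_{A_\infty}')^{p-1}\le e$ (from $x^{1/\log(e+x)}\le e$), $p'=1+\log(e+[w]_{A_\infty}')$, and $(p')^{p-1}\lesssim 1$, so $(pp')^p\lesssim\log(e+[w]_{A_\infty}')$. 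Consequently, the good part contributes $\lesssim [w]_{A_1}\log(e+[w]_{A_\infty}')\|f\|_{L^1(w)}$.

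For the bad part, I split $w\{|Tb|>1/2\}\le w(\bigcup_j 2Q_j)+w\{x\notin\bigcup_j 2Q_j:|Tb(x)|>1/2\}$. The usual Calder\'on--Zygmund kernel smoothness together with $\int b_j=0$ yields $\int_{(2Q_j)^c}|Tb_j|w\lesssim\|b_j\|_{L^1}\inf_{Q_j}Mw\le[w]_{A_1}\|b_j\|_{L^1}\inf_{Q_j}w\lesssim[w]_{A_1}\int_{Q_j}|f|w$; summing in $j$ and applying Chebyshev gives the second term $\lesssim[w]_{A_1}\|f\|_{L^1(w)}$. For the first term, the inclusion $\bigcup_j 2Q_j\subset\{Mf>c_d\}$ combined with the classical sharp linear weak-$(1,1)$ bound $\|M\|_{L^1(w)\to L^{1,\infty}(w)}\lesssim[w]_{A_1}$ yields $w(\bigcup_j 2Q_j)\lesssim[w]_{A_1}\|f\|_{L^1(w)}$. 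Combined with the good-part estimate, this completes the bound.

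The main obstacle is the optimization in the good-part estimate, which produces a single factor of $\log(e+[w]_{A_\infty}')$ precisely because Theorem~\ref{thm:A1strong(p,p)} carries the Wilson constant $[w]_{A_\infty}'$ (rather than $[w]_{A_1}$) in the exponent $1/p'$. The choice $p-1=1/\log(e+[w]_{A_\infty}')$ is calibrated exactly to make $([w]_{A_\infty}')^{p-1}$ absorbed into a constant while extracting one logarithm from $(p')^p$, leaving only the linear $[w]_{A_1}$ coming from the exponent $1/p$. The older bound $\|T\|_{\mathscr{B}(L^p(w))}\lesssim pp'[w]_{A_1}$ from \eqref{A1strong(p,p)} used in \cite{LOP3} would instead force the optimization $p-1=1/\log(e+[w]_{A_1})$, yielding the weaker logarithm of LOP3; thus the improvement here is a direct consequence of the improvement in the strong-type bound.
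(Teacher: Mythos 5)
Your treatment of the bad part, of $w(\bigcup_j 2Q_j)$, and your optimization $p-1=1/\log(e+[w]_{A_\infty}')$ (which indeed gives $([w]_{A_\infty}')^{p-1}\le e$ and $(pp')^p\lesssim p'\lesssim\log(e+[w]_{A_\infty}')$) are all correct and match the paper. The gap is in the good part. You apply the one-weight bound of Theorem~\ref{thm:A1strong(p,p)} to $g$ and then invoke a ``standard $L^p(w)$-estimate of the good part,'' implicitly claiming $\|g\|_{L^p(w)}^p\lesssim \lambda^{p-1}\|f\|_{L^1(w)}$. This is false in general: since $g=\sum_j \ave{f}_{Q_j}\chi_{Q_j}+f\chi_{\Omega^c}$, one has
\begin{equation*}
  \int_{Q_j}|g|\,w=\abs{\ave{f}_{Q_j}}\,w(Q_j)=\Big(\int_{Q_j}|f|\Big)\avgint_{Q_j}w,
\end{equation*}
and $\avgint_{Q_j}w$ can exceed $\operatorname*{ess\,inf}_{Q_j}w$ by a full factor of $[w]_{A_1}$ (take $f$ concentrated on a small subset of $Q_j$ where $w$ is tiny). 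So the honest bound is $\|g\|_{L^1(w)}\lesssim\|f\|_{L^1(Mw)}\le[w]_{A_1}\|f\|_{L^1(w)}$, and combined with the factor $[w]_{A_1}^{(1/p)\cdot p}=[w]_{A_1}$ coming from Theorem~\ref{thm:A1strong(p,p)}, your good-part estimate yields $[w]_{A_1}^2\log(e+[w]_{A_\infty}')$, not the claimed linear dependence. This is not what \cite{LOP3} does either.

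The paper avoids this double payment by \emph{not} using the one-weight strong bound: it applies Chebyshev together with the two-weight Lemma~\ref{keylemma},
\begin{equation*}
  \|Tg\|_{L^p(u)}\le c\,pp'(r')^{1/p'}\|g\|_{L^p(M_r u)},\qquad u:=w\chi_{(\widetilde\Omega)^c},
\end{equation*}
with $r=r(w)$ from the sharp reverse H\"older inequality. The point is that on each $Q_j$ the function $M_r(w\chi_{(\widetilde\Omega)^c})\le M_r(w\chi_{\R^d\setminus 2Q_j})$ is essentially constant (estimate \eqref{MisConstant}), hence comparable to its infimum over $Q_j$, so that
\begin{equation*}
  \int_{Q_j}|g|\,M_r(w\chi_{(\widetilde\Omega)^c})\lesssim\Big(\int_{Q_j}|f|\Big)\inf_{Q_j}M_r w
  \le 2[w]_{A_1}\Big(\int_{Q_j}|f|\Big)\inf_{Q_j}w\le 2[w]_{A_1}\int_{Q_j}|f|\,w,
\end{equation*}
using $M_rw\le 2Mw\le 2[w]_{A_1}w$. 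Only one factor of $[w]_{A_1}$ appears. Your kernel estimate for the bad part already uses exactly this infimum trick (via $\|b_j\|_{L^1}\inf_{Q_j}Mw$); the same idea must be used for the good part, which forces the passage through the two-weight lemma with the truncated weight rather than through the one-weight Theorem~\ref{thm:A1strong(p,p)}.
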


\begin{theorem}  \label{thm:dualA1weak(1,1)} Let $T$ be a Calder\'on-Zygmund operator.
Then
\begin{equation*}
\Big\| \frac{Tf}{w} \Big\|_{L^{1,\infty}(w)} \le c_{d,T}\, [w]_{A_\infty}' \, \log(e+[w]_{A_1}))\,  \|f\|_{L^{1}({\mathbb R}^d)}.
\end{equation*}
\end{theorem}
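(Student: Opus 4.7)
The plan is to mirror the strategy of Lerner, Ombrosi and the second author \cite{LOP2}, who established the analogous estimate with $[w]_{A_1}$ in place of $[w]_{A_\infty}'$, and to sharpen it by invoking the refined strong-type bound of Theorem~\ref{thm:A1strong(p,p)} (equivalently, Corollary~\ref{cor:theAp&Ainfty}) at the good-part estimate, and by replacing the naive $A_1$-doubling step in the bad-part estimate by a refined argument based on Wilson's identity $\int_Q M(w\chi_Q) \le [w]_{A_\infty}' w(Q)$.

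By the standard linearization of the weak-$L^{1,\infty}(w)$ norm, it suffices to show that
\[
\lambda\, w(E_\lambda) \le \int_{E_\lambda} |Tf|\, dx \le c\,[w]_{A_\infty}' \log(e+[w]_{A_1}) \|f\|_{L^{1}}
\]
for every $\lambda > 0$, with $E_\lambda = \{|Tf|/w > \lambda\}$. I would perform a Calder\'on--Zygmund decomposition of $f/w$ with respect to the doubling measure $w\,dx$ (which is doubling since $w\in A_1$) at level $\lambda$, obtaining disjoint cubes $\{Q_j\}$ with $|g|/w \le c[w]_{A_1}\lambda$ almost everywhere and $b = \sum_j b_j$, $\supp b_j \subseteq Q_j$, $\int b_j\,dx = 0$, $\lambda w(\Omega)\le \|f\|_{L^1}$ for $\Omega = \bigcup Q_j$.

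Split $E_\lambda \subseteq \{|Tg|/w > \lambda/2\} \cup \{|Tb|/w > \lambda/2\}$. For the bad part, the standard kernel cancellation yields $\int_{(\Omega^*)^c} |Tb|\, dx \le c\|f\|_{L^1}$ for $\Omega^* = \bigcup 2Q_j$, whereas the $w$-measure of $\Omega^*$ is controlled via Wilson's identity together with the Carleson-type structure of the CZ cubes, giving $\lambda w(\Omega^*) \le c[w]_{A_\infty}' \|f\|_{L^1}$, which supplies the leading $[w]_{A_\infty}'$ factor. For the good part, Chebyshev's inequality with exponent $p \in (1,2)$ gives $\lambda w(\{|Tg|/w > \lambda/2\}) \le c\lambda^{1-p}\|Tg\|_{L^p(w^{1-p})}^p$. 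Since $w \in A_1$ forces $w^{1-p} \in A_p$ with $[w^{1-p}]_{A_p} \le [w]_{A_1}^{p-1}$, and the dual weight of $w^{1-p}$ is $w$, Corollary~\ref{cor:theAp&Ainfty} provides a bound on $\|T\|_{L^p(w^{1-p})}$ in terms of $[w]_{A_1}$ and $[w]_{A_\infty}'$. Using the pointwise bound $|g|/w \le c[w]_{A_1}\lambda$ to estimate $\|g\|_{L^p(w^{1-p})}^p \le (c[w]_{A_1}\lambda)^{p-1}\|f\|_{L^1}$, and optimizing $p-1 \sim 1/\log(e+[w]_{A_1})$ so that all powers of $[w]_{A_1}$ reduce to dimensional constants, the logarithmic factor emerges from the $(p')^{2-p}$-type term hidden in the extrapolation constant of Corollary~\ref{cor:theAp&Ainfty}.

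The principal technical obstacle is the refined $w(\Omega^*)$ estimate: the classical $A_1$-doubling $w(2Q) \le c[w]_{A_1} w(Q)$ (used in \cite{LOP2}) would introduce a spurious $[w]_{A_1}$ factor in the leading constant, and replacing it by an $A_\infty$-type bound with linear dependence on $[w]_{A_\infty}'$ requires exploiting Wilson's identity directly, most likely through a Carleson-embedding or sparse-type summation adapted to the Calder\'on--Zygmund family $\{Q_j\}$, rather than through the blunt doubling of the underlying weight.
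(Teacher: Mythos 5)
Your good-part estimate is essentially sound and runs parallel to the paper's: the paper uses the dual form \eqref{dualkeylemma} of Lemma~\ref{keylemma} with the reverse H\"older exponent $r(w)=1+1/(\tau_d[w]_{A_\infty}')$ (so that $r'\approx[w]_{A_\infty}'$ and $M_rw\le 2[w]_{A_1}w$), and then optimizes the auxiliary exponent against $\log(e+[w]_{A_1})$ exactly as you propose; your route through Corollary~\ref{cor:theAp&Ainfty} would additionally require tracking the $p$-dependence hidden in the extrapolation constants as $p\to1$, which the corollary as stated does not display, so the direct lemma is the safer tool. The genuine gap is in your treatment of the exceptional set. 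You decompose $f/w$ with respect to $w\,dx$ on (dyadic) cubes and must therefore pass to the dilated set $\Omega^*=\bigcup_j 2Q_j$ before the kernel cancellation applies, and you assert that $\lambda\,w(\Omega^*)\le c[w]_{A_\infty}'\|f\|_{L^1}$ can be extracted from Wilson's identity. This step fails: $[w]_{A_\infty}'$ does not control the doubling constant of $w$, even up to a fixed power. For the two-valued weight $w=t\chi_E+\chi_{\R\setminus E}$ of Section~\ref{examples} one has $[w]_{A_\infty}'\le 4\log t$ by Lemma~\ref{lem:wAinftyPrime}, yet choosing $E=[0,1]$ and $Q=[1,2]$ gives $w(2Q)/w(Q)\approx t/2$, exponentially larger than $[w]_{A_\infty}'$. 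Nor does a Carleson-type summation over the family $\{Q_j\}$ rescue the bound, since the mass of $w$ on $2Q_j\setminus Q_j$ need not be seen by $\int_{Q_j}M(w\chi_{2Q_j})$ in a way summable against $w(\Omega)$; the estimate $\frac{w(2Q_j)}{|2Q_j|}\le\inf_{Q_j}M(w\chi_{2Q_j})$ only returns $\int M(w\chi_R)$ over an ambient cube $R$, which produces $w(R)$ rather than $\sum_jw(Q_j)$ on the right.

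The paper avoids this obstacle entirely by a different decomposition: it sets $\Omega_\lambda=\{M_w^c(f/w)>\lambda\}$ for the \emph{weighted centered} maximal function and takes the \emph{Whitney} cubes of $\Omega_\lambda$. The Besicovitch covering lemma gives $w(\Omega_\lambda)\le\frac{c_d}{\lambda}\|f\|_{L^1(\ud x)}$ with \emph{no} weight constant at all, and since $\R^d\setminus\Omega_\lambda$ lies at distance $\gtrsim\ell(Q_j)$ from each Whitney cube $Q_j$, the cancellation estimate \eqref{estimate-atom-like} (with $w\equiv1$) applies directly on $\Omega_\lambda^c$ without any dilation, yielding $I_1\le\frac{c}{\lambda}\|f\|_{L^1}$. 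In particular, neither the exceptional set nor the bad part contributes the factor $[w]_{A_\infty}'$; that factor comes entirely from the good part via $r'\approx[w]_{A_\infty}'$, contrary to your attribution. The Whitney geometry is then used once more in the good part (each $Q_j$ has a comparable dilate $Q_j^*$ centered outside $\Omega_\lambda$, so $\ave{|f|}_{Q_j^*}\le c\lambda\ave{w}_{Q_j^*}$) to control $\sum_j(\ave{|f|}_{Q_j})^{p'}\int_{Q_j}w^{1-p'}$. If you wish to keep your decomposition, you must either replace the dilation step by the Whitney/Besicovitch argument or accept an $A_1$-doubling loss there, which would degrade the leading constant to $[w]_{A_1}$.
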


\

\subsection{Commutators with BMO functions} \label{commutators}

\

We further pursue the $A_\infty$  point-of-view by proving  a result  in the spirit of  Theorem \ref{thm:theA2&Ainfty} for commutators of linear operators $T$ with BMO functions.  These operators are defined formally by
the expression
$$[b,T]f=b T(f) - T(b\,f). $$
More generally we can consider the  $k$th order commutator defined by 
$$T_b^k:= [b, T^{k-1}_b].$$

When $T$ is a singular integral operator, these operators were
considered by Coifman, Rochberg and Weiss \cite{CRW-commutators} and since then many results have been obtaind. We refer to \cite{CPP} for more information about these operators. It is shown in  \cite{CPP} that if $T$ is a linear operator bounded on $L^2(w)$ for any $w\in A_2$ with bound 
\begin{equation*}
\|T\|_{\mathscr{B}(L^{2}(w))}  \le \vp([w]_{A_2}),
\end{equation*}
where $\vp$ is an increasing function  $\vp:[1,\infty)\to
[0,\infty)$, then there is a dimensional constant  $c$ such that
\begin{equation*}
\|[b,T]\|_{\mathscr{B}(L^{2}(w))} \le c\, \vp(c[w]_{A_2})\,[w]_{A_2}
\|b\|_{BMO}.
\end{equation*}
In particular, if $T$ is any  Calder\'on-Zygmund operator we can use the linear $A_2$ theorem for $T$ to deduce
\begin{equation*}
\|[b,T]\|_{\mathscr{B}(L^{2}(w))} \le c\,[w]_{A_2}^2\,
\|b\|_{BMO},
\end{equation*}
and the quadratic exponent cannot be improved. 

An analogous result adapted to the $A_\infty$ control reads as follows:

\begin{theorem}\label{main}
Let $T$ be a linear operator bounded on $L^2(w)$ for any $w\in A_2$ and let $b\in BMO$.
Suppose further that there is a function  $\vp:[1,\infty)^3\to
[0,\infty)$,  increasing with respect to each component, such that
\begin{equation*}
\|T\|_{\mathscr{B}(L^{2}(w))} \le \vp\Big(  [w]_{A_2},[w]_{A_\infty}',[\si]_{A_{\infty}}'\Big).
\end{equation*}
then there is a dimensional constant $c$ such that
\begin{equation*}
\|[b,T]\|_{\mathscr{B}(L^{2}(w))} \le c\, \vp\Big(c\,[w]_{A_2},c\,[w]_{A_\infty}',c\,[\si]_{A_{\infty}}'\Big)\, 
\big([w]_{A_\infty}'+[\si]_{A_{\infty}}'\big)\,
\|b\|_{BMO},
\end{equation*}
or more generally 
\begin{equation*}
\|T_b^k\|_{\mathscr{B}(L^{2}(w))} \le c\, \vp\Big(c\,[w]_{A_2},c\,[w]_{A_\infty}',c\,[\si]_{A_{\infty}}'\Big)\, 
\big([w]_{A_\infty}'+[\si]_{A_{\infty}}'\big)^k\,
\|b\|^k_{BMO}.
\end{equation*}
\end{theorem}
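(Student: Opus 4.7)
The plan is to follow the Coifman--Rochberg--Weiss conjugation method, in the quantitative form used in \cite{CPP} to prove the analogous statement with only $[w]_{A_2}$ on the right-hand side, but carrying out the crucial ``stability'' estimate with the help of the sharp reverse H\"older inequality governed by Wilson's constant $[\,\cdot\,]_{A_\infty}'$, rather than by $[\,\cdot\,]_{A_2}$. Concretely, for $z\in\C$ we set $T_z f := e^{zb}T(e^{-zb}f)$, so that $T_0=T$ and $\frac{d^k}{dz^k}|_{z=0}T_z=T_b^k$. Cauchy's integral formula then gives, for any radius $r>0$,
\begin{equation*}
  T_b^k f = \frac{k!}{2\pi \img}\oint_{|z|=r}\frac{T_z f}{z^{k+1}}\,\ud z,
  \qquad\text{hence}\qquad
  \Norm{T_b^k f}{L^2(w)}\leq \frac{k!}{r^k}\max_{|z|=r}\Norm{T_z f}{L^2(w)}.
\end{equation*}
A direct computation gives $\Norm{T_z f}{L^2(w)}=\Norm{T(e^{-zb}f)}{L^2(W_z)}$ with $W_z:=e^{2\Re(zb)}w$, so by hypothesis
\begin{equation*}
  \Norm{T_z f}{L^2(w)}\leq \vp\bigl([W_z]_{A_2},[W_z]_{A_\infty}',[W_z^{-1}]_{A_\infty}'\bigr)\Norm{f}{L^2(w)}.
\end{equation*}
Everything thus reduces to controlling the three constants of $W_z$ by those of $w$ for $z$ on a sufficiently large circle.

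The key step is the following \textbf{stability lemma}: there exists a small dimensional constant $\eta$ such that for
\begin{equation*}
  r:=\frac{\eta}{\bigl([w]_{A_\infty}'+[\si]_{A_\infty}'\bigr)\Norm{b}{BMO}}
\end{equation*}
and every $z\in\C$ with $|z|\leq r$,
\begin{equation*}
  [W_z]_{A_2}\leq c\,[w]_{A_2},\qquad [W_z]_{A_\infty}'\leq c\,[w]_{A_\infty}',\qquad [W_z^{-1}]_{A_\infty}'\leq c\,[\si]_{A_\infty}'.
\end{equation*}
The proof combines two sharp ingredients. First, the sharp reverse H\"older inequality governed by $[w]_{A_\infty}'$ (established by the authors elsewhere and freely available here): $w$ satisfies $\bigl(\avgint_Q w^{1+\de}\bigr)^{1/(1+\de)}\leq 2\avgint_Q w$ with $\de\sim 1/[w]_{A_\infty}'$, and an analogous statement for $\si$ with exponent $1+\de'\sim 1+1/[\si]_{A_\infty}'$. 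Second, the John--Nirenberg inequality, giving $\avgint_Q e^{\lambda|b-b_Q|}\leq 2$ whenever $|\lambda|\Norm{b}{BMO}\leq c_d$. Applying H\"older's inequality with exponents $(1+\de,1+1/\de)$ to $\avgint_Q W_z = e^{2\Re(zb_Q)}\avgint_Q e^{2\Re(z(b-b_Q))}w$, one needs $2|z|(1+1/\de)\Norm{b}{BMO}\leq c_d$, i.e.\ $|z|\lesssim 1/([w]_{A_\infty}'\Norm{b}{BMO})$; a symmetric argument for $\avgint_Q W_z^{-1}$ forces $|z|\lesssim 1/([\si]_{A_\infty}'\Norm{b}{BMO})$. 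Choosing $r$ as above covers both. The $A_\infty$ bounds for $W_z$ and $W_z^{-1}$ follow from the $A_2$ bound via $[\,\cdot\,]_{A_\infty}'\lesssim[\,\cdot\,]_{A_2}$ together with a further direct computation that tracks the $A_\infty$ constant separately; alternatively one can argue the $A_\infty$ bound directly from the definition and the same reverse H\"older/John--Nirenberg pairing.

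Once the stability lemma is in hand, the estimate is immediate: uniformly for $|z|=r$,
\begin{equation*}
  \Norm{T_z f}{L^2(w)}\leq \vp\bigl(c[w]_{A_2},c[w]_{A_\infty}',c[\si]_{A_\infty}'\bigr)\Norm{f}{L^2(w)},
\end{equation*}
and substituting into the Cauchy bound with $r^{-k}\leq c_k\bigl([w]_{A_\infty}'+[\si]_{A_\infty}'\bigr)^k\Norm{b}{BMO}^k$ yields the desired inequality for $T_b^k$, with the case $k=1$ giving the statement for $[b,T]$. The \textbf{main obstacle} is the stability lemma: the novelty compared with \cite{CPP} is that the radius $r$ must be allowed to depend only on the (potentially much smaller) $A_\infty$ constants, not on $[w]_{A_2}$; it is precisely the sharp reverse H\"older exponent $1+1/([w]_{A_\infty}'\cdot\text{const})$ that makes this possible, and the balance with John--Nirenberg in H\"older's inequality is what determines the exact dependence of $r$ on $[w]_{A_\infty}'+[\si]_{A_\infty}'$.
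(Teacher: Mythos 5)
Your proposal is correct and follows essentially the same route as the paper: the Coifman--Rochberg--Weiss conjugation $T_z=e^{zb}Te^{-zb}$ with a Cauchy integral on a circle of radius $\sim\big(\Norm{b}{BMO}([w]_{A_\infty}'+[\si]_{A_\infty}')\big)^{-1}$, combined with stability of $[\,\cdot\,]_{A_2}$ and $[\,\cdot\,]_{A_\infty}'$ under multiplication by $e^{\Re zb}$, proved via the sharp reverse H\"older inequality and John--Nirenberg (the paper's Lemmas~\ref{lem:expRezbwA2} and~\ref{lem:expRezbwAinfty}); the paper iterates the $k=1$ case rather than using the $k$-th derivative Cauchy formula, but this is cosmetic. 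One caution: your first suggested route for the $A_\infty'$ stability, via $[W_z]_{A_\infty}'\lesssim[W_z]_{A_2}$, would only yield $c[w]_{A_2}$ in the second and third slots of $\vp$ and hence a weaker conclusion; the correct argument is your alternative (show $W_z$ satisfies a reverse H\"older inequality with exponent $1+\de$, $\de\sim 1/[w]_{A_\infty}'$, and invoke part b) of Theorem~\ref{thm:SharpRHI}), which is exactly what the paper does and is the genuinely nontrivial step.
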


We can now apply Theorem \ref{thm:theA2&Ainfty}.

\begin{corollary}\label{corollary:commutatorCZO}
Let $T$ be  any Calder\'on-Zygmund operator and let $b\in BMO$. Then 
\begin{equation*}
\|[b,T]\|_{\mathscr{B}(L^{2}(w))} \leq c\,[w]_{A_2}^{1/2}\big([w]_{A_\infty}'+[w^{-1}]_{A_\infty}'\big)^{3/2}\,\|b\|_{BMO},
\end{equation*}
or more generally, 
\begin{equation*}
\|T_b^k\|_{\mathscr{B}(L^{2}(w))} \le c\,  [w]_{A_2}^{1/2}\, 
\big([w]_{A_\infty}'+[\si]_{A_{\infty}}'\big)^{k+1/2}\,
\|b\|^k_{BMO}.
\end{equation*}
\end{corollary}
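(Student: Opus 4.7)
The plan is immediate: combine Theorem~\ref{main} with Theorem~\ref{thm:theA2&Ainfty}. Any Calder\'on--Zygmund operator $T$ is linear and bounded on $L^2(w)$ for every $w\in A_2$, and Theorem~\ref{thm:theA2&Ainfty} supplies the concrete function
\[
\varphi(x,y,z) = K\,x^{1/2}(y+z)^{1/2},
\]
where $K$ is a dimensional constant depending on $T$, for which $\|T\|_{\mathscr{B}(L^2(w))}\le \varphi([w]_{A_2},[w]_{A_\infty}',[\sigma]_{A_\infty}')$. This $\varphi$ is increasing in each variable, so the hypothesis of Theorem~\ref{main} is met, and the iterated commutators $T_b^k$ make sense because $T$ is linear.

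Next I insert this specific $\varphi$ into the conclusion of Theorem~\ref{main}. The key observation is that $\varphi$ is positively homogeneous of degree $1$:
\[
\varphi(\lambda x,\lambda y,\lambda z)
= K(\lambda x)^{1/2}(\lambda y+\lambda z)^{1/2}
= \lambda\,\varphi(x,y,z)\qquad(\lambda>0),
\]
so the universal constants that Theorem~\ref{main} slips inside the arguments of $\varphi$ simply factor out and are absorbed into the overall dimensional constant. In particular,
\[
\varphi\bigl(c[w]_{A_2},\,c[w]_{A_\infty}',\,c[\sigma]_{A_\infty}'\bigr)
\le c'\,[w]_{A_2}^{1/2}\bigl([w]_{A_\infty}'+[\sigma]_{A_\infty}'\bigr)^{1/2}.
\]

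For the first-order commutator, multiplying this by the additional factor $\bigl([w]_{A_\infty}'+[\sigma]_{A_\infty}'\bigr)\|b\|_{BMO}$ supplied by Theorem~\ref{main} raises the exponent on the $A_\infty$ sum from $1/2$ to $3/2$, giving the first displayed estimate. For the $k$-th order commutator $T_b^k$, the analogous extra factor in Theorem~\ref{main} is $\bigl([w]_{A_\infty}'+[\sigma]_{A_\infty}'\bigr)^{k}\|b\|_{BMO}^{k}$, producing the exponent $k+1/2$ exactly as claimed.

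There is no real obstacle here: the corollary is a mechanical composition of two already proved results, and the precise homogeneity of the Calder\'on--Zygmund bound from Theorem~\ref{thm:theA2&Ainfty} is exactly what is needed to absorb the universal constants that Theorem~\ref{main} introduces inside $\varphi$. The only thing worth emphasizing is that the $A_\infty$ sum appears with a \emph{higher} power than $[w]_{A_2}$ itself, so the ``$A_\infty$--improvement'' of the $A_2$ theorem for $T$ is partially counterbalanced by the commutator's need for $k+1/2$ copies of this sum, rather than $k$; this is consistent with the heuristic that commutators behave, weight-theoretically, like one extra application of $T$.
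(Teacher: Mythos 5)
Your proposal is correct and coincides with the paper's own (implicit) argument: the corollary is stated immediately after the remark ``We can now apply Theorem~\ref{thm:theA2&Ainfty},'' i.e.\ one feeds $\varphi(x,y,z)=Kx^{1/2}(y+z)^{1/2}$ from Theorem~\ref{thm:theA2&Ainfty} into Theorem~\ref{main} and uses the degree-one homogeneity of $\varphi$ to absorb the dimensional constants, exactly as you do. Nothing further is needed.
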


\

\subsection{An end-point estimate when $p=\infty$}

\

Having investigated the sharp bounds for Calder\'on-Zygmund operators $T:L^p(w)\to L^p(w)$ for $p\in(1,\infty)$ and $w\in A_p$, we finally consider the end-point
\begin{equation*}
  T:L^{\infty}(w)\to\BMO(w),\qquad w\in A_\infty.
\end{equation*}
Qualitatively, this situation seems slightly uninteresting, as these end-point spaces simply reduce to their unweighted analogues: That $L^{\infty}(w)=L^{\infty}$ with equal norms is immediate from the fact that $w$ and the Lebesgue measure share the same zero sets for $w\in A_{\infty}$. That the weighted norm
\begin{equation*}
  \Norm{f}{\BMO(w)}:=\sup_Q\inf_c\frac{1}{w(Q)}\int_Q\abs{f-c}w<\infty,
\end{equation*}
is equivalent to the usual $\Norm{f}{\BMO}$ for $w\in A_\infty$ was proven by Muckenhoupt and Wheeden \cite{MW}, Theorem~5. However, one may still investigate the quantitative bound of operators $T:L^{\infty}\to\BMO=\BMO(w)$, when the latter space is equipped with the norm $\Norm{\ }{\BMO(w)}$. We start with:

\begin{theorem}\label{thm:embNorm}
For $w\in A_{\infty}$, we have a bounded embedding $I:\BMO\hookrightarrow\BMO(w)$ of norm at most $c[w]_{A_{\infty}}'$, where $c$ is dimensional. This estimate is sharp in the following sense: if the norm of the embedding is bounded by $\phi([w]_{A_{\infty}}')$, or just by $\phi([w]_{A_\infty})$, for all $w\in A_{\infty}$, then $\phi(t)\geq ct$.
\end{theorem}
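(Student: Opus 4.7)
I would prove this by a layer--cake argument combining John--Nirenberg with the sharp reverse H\"older inequality for Wilson's $A_\infty$ constant. Fix $f\in\BMO$ and a cube $Q$, and pick the competitor $c=f_Q:=\avgint_Q f$ in the infimum defining $\Norm{f}{\BMO(w)}$. The layer--cake formula gives $\int_Q|f-f_Q|w\ud x=\int_0^\infty w(E_t)\ud t$ with $E_t:=\{x\in Q:|f-f_Q|>t\}$. John--Nirenberg provides $|E_t|\leq c_1|Q|e^{-c_2 t/\Norm{f}{\BMO}}$, and the sharp reverse H\"older inequality for Wilson's constant---a standard companion of the quantitative $A_\infty$ machinery used throughout this paper---yields, for every $E\subset Q$,
\begin{equation*}
  \frac{w(E)}{w(Q)}\leq C_d\Big(\frac{|E|}{|Q|}\Big)^{\delta},\qquad\delta\eqsim\frac{1}{[w]_{A_\infty}'}.
\end{equation*}
Combining these and splitting the $t$--integration at the threshold $t_0\eqsim\Norm{f}{\BMO}$ beyond which the John--Nirenberg bound becomes effective,
\begin{equation*}
  \int_Q|f-f_Q|w\ud x\lesssim w(Q)\Big(t_0+\int_{t_0}^\infty e^{-c_2\delta t/\Norm{f}{\BMO}}\ud t\Big)\lesssim\frac{\Norm{f}{\BMO}}{\delta}w(Q)\eqsim[w]_{A_\infty}'\Norm{f}{\BMO}w(Q);
\end{equation*}
dividing by $w(Q)$ and taking the supremum over $Q$ yields $\Norm{f}{\BMO(w)}\lesssim[w]_{A_\infty}'\Norm{f}{\BMO}$.

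\textbf{Sharpness.} For small $\eps>0$ I would take the $A_\infty$ weight $w(x):=|x|^{-d(1-\eps)}$ and test the embedding on $f(x):=\log|x|\in\BMO$, whose $\BMO$ norm is $O(1)$ independent of $\eps$. On the unit cube $Q_0$ centred at the origin, polar coordinates together with the identity $\int_0^1(\log r)\,r^{d\eps-1}\ud r=-1/(d\eps)^2$ give $w(Q_0)\eqsim 1/\eps$; the $w$--median of $\log|x|$ on $Q_0$ sits at $\eqsim-1/\eps$, and computing the $L^1(w)$ deviation from it yields
\begin{equation*}
  \Norm{f}{\BMO(w)}\geq\inf_{c\in\R}\frac{1}{w(Q_0)}\int_{Q_0}|f-c|w\ud x\eqsim\frac{1}{\eps}.
\end{equation*}
A parallel computation shows $[w]_{A_\infty}\eqsim 1/\eps$, with the supremum attained (up to constants) on cubes centred at the origin, and the pointwise lower bound $M(w\chi_{Q_0})(x)\gtrsim w(x)/\eps$ for small $|x|$---obtained by testing the maximal function on the ball $B(x,2|x|)\supset B(0,|x|)$---shows $[w]_{A_\infty}'\gtrsim 1/\eps$ as well. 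Letting $\eps\downarrow 0$, the embedding norm is therefore comparable to both $[w]_{A_\infty}'$ and $[w]_{A_\infty}$, forcing $\phi(t)\gtrsim t$ in either formulation.

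The only real technical delicacy is invoking the \emph{correct} sharp reverse H\"older inequality with exponent $\delta\eqsim 1/[w]_{A_\infty}'$ rather than the coarser $1/[w]_{A_\infty}$; this precise quantitative form is exactly what produces the linear dependence on the smaller Wilson constant asserted in the theorem.
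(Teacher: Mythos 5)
Your proposal is correct and follows essentially the same route as the paper: the sharpness example (the weight $|x|^{-1+\eps}$ tested against $f=\log|x|$, with $[w]_{A_\infty}\eqsim[w]_{A_\infty}'\eqsim 1/\eps$ and the exponential change of variables reducing the $L^1(w)$ deviation to $\eps^{-2}\int_0^\infty|u-\alpha|e^{-u}\ud u$) is identical to the paper's. For the upper bound the paper applies H\"older's inequality with the reverse-H\"older exponent $r(w)'$ together with the sharp growth $\big(\avgint_Q|f-f_Q|^{p}\big)^{1/p}\lesssim p\Norm{f}{\BMO}$, whereas you run a layer--cake argument through John--Nirenberg and the condition $w(E)/w(Q)\leq 2(|E|/|Q|)^{1/r(w)'}$; both are immediate consequences of the same key ingredient, Theorem~\ref{thm:SharpRHI} with $r(w)-1\eqsim 1/[w]_{A_\infty}'$, so the difference is only organizational.
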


The following corollary for Calder\'on--Zygmund operators can be seen as an easy endpoint estimate of the bound $\Norm{T}{\mathscr{B}(L^p(w))}\leq c_{p,T}\,[w]_{A_p}$ for $p\in[2,\infty)$.

\begin{corollary}\label{cor:LinftyBMO}
Let $T$ be  any Calder\'on-Zygmund operator and let $w\in A_{\infty}$, then \, $T:L^{\infty}\to\BMO(w)$ with norm at most $c_{T}\,[w]_{A_{\infty}}'$.  Furthermore,  this estimate is sharp in terms of the dependence on $[w]_{A_{\infty}}'$ in the same way as Theorem~\ref{thm:embNorm}.
\end{corollary}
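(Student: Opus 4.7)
The plan is to factor the operator through the unweighted $\BMO$ space. For the upper bound, two ingredients suffice: the classical (unweighted) fact that every Calder\'on--Zygmund operator $T$ maps $L^\infty$ boundedly into $\BMO$ with $\Norm{Tf}{\BMO}\leq c_T\Norm{f}{L^\infty}$, and Theorem~\ref{thm:embNorm}, which provides the embedding $\BMO\hookrightarrow\BMO(w)$ with norm at most $c[w]_{A_\infty}'$. Composing these immediately yields
\[
  \Norm{Tf}{\BMO(w)}\leq c\,c_T\,[w]_{A_\infty}'\,\Norm{f}{L^\infty}.
\]

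For sharpness, I would reverse the factorization: starting from an extremizing pair $(g,w)$ that witnesses sharpness of Theorem~\ref{thm:embNorm}, realize $g$ as $Tf_0$ for a fixed $f_0\in L^\infty$. The standard sharp example for weighted $\BMO$ bounds is the logarithm $g(x)=\log|x|$ paired with power weights $w(x)=|x|^{\alpha}$, and such a logarithm is naturally in the image of a Calder\'on--Zygmund operator applied to a bounded function. In dimension one the Hilbert transform gives $H(\sign)(x)=-\tfrac{2}{\pi}\log|x|$; in higher dimensions a single Riesz transform applied to an odd bounded function such as $\sign(x_1)$ produces an analogous logarithmic $\BMO$ function.

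With such $(T,f_0)$ fixed, testing against the family of weights $w_n$ from the sharpness of Theorem~\ref{thm:embNorm} yields
\[
  \Norm{T}{\mathscr{B}(L^\infty,\BMO(w_n))}
   \geq\frac{\Norm{Tf_0}{\BMO(w_n)}}{\Norm{f_0}{L^\infty}}
   \gtrsim [w_n]_{A_\infty}'.
\]
For such power weights the two $A_\infty$ constants are comparable, so any increasing $\phi$ satisfying $\Norm{T}{\mathscr{B}(L^\infty,\BMO(w))}\leq\phi([w]_{A_\infty}')$ or $\phi([w]_{A_\infty})$ for all $w\in A_\infty$ must obey $\phi(t)\gtrsim t$. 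The main obstacle is verifying that the extremizers in Theorem~\ref{thm:embNorm} are indeed of logarithmic shape so that they lie in the range of $T$ acting on bounded data; this should be a direct consequence of the standard sharpness construction for the embedding rather than requiring a separate argument.
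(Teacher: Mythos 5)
Your proposal is correct and follows essentially the same route as the paper: the upper bound is obtained by factoring $T$ through unweighted $\BMO$ via Theorem~\ref{thm:embNorm}, and sharpness is obtained by producing the logarithmic extremizer of that theorem as the Hilbert transform of a bounded function (the paper uses $H\chi_{(-1,0)}(x)=\log(x+1)-\log x$, which differs from $\log\frac1x$ by a bounded function on $[0,1]$, rather than $H(\sign)$, but this is immaterial). The "obstacle" you flag at the end is indeed vacuous, since the sharpness example in Theorem~\ref{thm:embNorm} is exactly $f(x)=\log\abs{x}$ with $w(x)=\abs{x}^{-1+\eps}$.
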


We conclude the introduction by stating the following observation which may be of some interest.

\begin{proposition}\label{BMO-Ainfty}
If $w\in A_\infty$, then $\log w\in\BMO$ with 
\begin{equation*}
  \Norm{\log w}{\BMO}\leq\log(2e[w]_{A_\infty}).
\end{equation*}
\end{proposition}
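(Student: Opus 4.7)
The plan is, for each cube $Q$, to bound $\inf_c \avgint_Q |\log w - c|$ by choosing the centering constant $c := \log \avgint_Q w$. Writing $A := \avgint_Q w$ and $B := \avgint_Q \log w$, the hypothesis $w \in A_\infty$ reads $A e^{-B} \le [w]_{A_\infty}$, equivalently
\begin{equation*}
\avgint_Q \log\frac{A}{w} \;=\; \log A - B \;\le\; \log[w]_{A_\infty}.
\end{equation*}
The task is then to control $\avgint_Q |\log(w/A)|$, which I will split as $\avgint_Q (\log(w/A))_+ + \avgint_Q (\log(A/w))_+$ and estimate each half separately.

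The first (``positive'') term is handled directly by Jensen's inequality applied to $e^{x}$:
\begin{equation*}
\exp\Bigl(\avgint_Q (\log(w/A))_+\Bigr) \;\le\; \avgint_Q \max\bigl(w/A,\,1\bigr) \;\le\; 1 + \avgint_Q w/A \;=\; 2,
\end{equation*}
which gives $\avgint_Q (\log(w/A))_+ \le \log 2$. Note that this half uses only $\avgint_Q w/A = 1$ and does not invoke the $A_\infty$ constant at all.

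The second term is where I expect the main (if modest) obstacle to lie: no local $L^1$ control on $w^{-1}$ is available for a general $A_\infty$ weight, so Jensen cannot be applied directly to $\log(A/w)$. The remedy is the elementary mean-zero identity
\begin{equation*}
\avgint_Q (\log(A/w))_+ \;=\; \avgint_Q \log(A/w) + \avgint_Q (\log(A/w))_- \;=\; \avgint_Q \log(A/w) + \avgint_Q (\log(w/A))_+,
\end{equation*}
which reduces this direction to the already estimated positive part plus the quantity $\avgint_Q \log(A/w)$, precisely what the $A_\infty$ hypothesis controls. This yields $\avgint_Q (\log(A/w))_+ \le \log 2 + \log[w]_{A_\infty}$. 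Adding the two estimates gives
\begin{equation*}
\avgint_Q |\log w - \log A| \;\le\; 2\log 2 + \log[w]_{A_\infty} \;=\; \log(4[w]_{A_\infty}) \;\le\; \log(2e[w]_{A_\infty}),
\end{equation*}
since $4 < 2e$, which is the claimed bound.
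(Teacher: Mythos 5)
Your proof is correct and follows essentially the same route as the paper's: both split $\log(w/\ave{w}_Q)$ into its positive and negative parts, control the positive part using only $\avgint_Q w/\ave{w}_Q=1$ (you via Jensen, the paper via the pointwise bound $\log t\le t$), and reduce the negative part to the positive part plus $\avgint_Q\log(\ave{w}_Q/w)\le\log[w]_{A_\infty}$ via the same mean-value identity. Your centering at $\ave{w}_Q$ rather than the paper's $2\ave{w}_Q$ even gives the marginally better constant $\log(4[w]_{A_\infty})$.
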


\section{The two different $A_\infty$ constants}

Before pursuing further our analysis of inequalities with $A_\infty$ control, we include this short section to compare the two $A_\infty$ constants
\begin{equation*}
  [w]_{A_\infty}:=\sup_Q\Big(\avgint_Q w\Big)\exp\Big(\avgint_Q\log w^{-1}\Big),\qquad
  [w]_{A_\infty}':=\sup_Q\frac{1}{w(Q)}\int_Q M(w\chi_Q).
\end{equation*}

We need the following auxiliary estimate, which is also used later in the paper:

\begin{lemma}\label{lem:logMaxFn}
The logarithmic maximal function
\begin{equation*}
  M_0 f:=\sup_{Q}\exp\Big(\avgint_Q\log\abs{f}\Big) \,\chi_Q
\end{equation*}
satisfies
\begin{equation*}
  \Norm{M_0f}{L^p}\leq c_{d}^{1/p}\Norm{f}{L^p}
\end{equation*}
for all $p\in(0,\infty)$. For the dyadic version, we can take $c_{d}=e$, independent of dimension $d$.
\end{lemma}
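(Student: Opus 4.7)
The key observation is Jensen's inequality: for any exponent $s>0$, applying the convex function $e^x$ to the average of $\log \abs{f}^s$ gives
\begin{equation*}
  \exp\Big(\avgint_Q\log\abs{f}\Big)=\exp\Big(\frac{1}{s}\avgint_Q\log\abs{f}^s\Big)\leq\Big(\avgint_Q\abs{f}^s\Big)^{1/s},
\end{equation*}
and hence pointwise $M_0 f\leq (M\abs{f}^s)^{1/s}$ for every $s\in(0,\infty)$. The plan is to feed this into the strong $(q,q)$ bound of the (dyadic) maximal operator at the exponent $q:=p/s$ and then optimise the free parameter $s$.

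In the dyadic setting, one has the sharp Doob inequality $\Norm{M^d g}{L^q}\leq q'\Norm{g}{L^q}$ for $q\in(1,\infty)$, with a dimension-free constant. Choosing any $s\in(0,p)$ and setting $q=p/s$, this yields
\begin{equation*}
  \Norm{M_0 f}{L^p}\leq\Norm{(M^d\abs{f}^s)^{1/s}}{L^p}=\Norm{M^d\abs{f}^s}{L^{p/s}}^{1/s}\leq\Big(\frac{p}{p-s}\Big)^{1/s}\Norm{f}{L^p}.
\end{equation*}
Writing $u:=s/p\in(0,1)$, the multiplicative constant becomes $\exp\bigl(\tfrac{1}{p}\cdot\tfrac{-\log(1-u)}{u}\bigr)$, and since $-\log(1-u)/u\to 1$ as $u\to 0^+$, the infimum over $s$ is $e^{1/p}$. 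Taking $s$ sufficiently small (depending on $p$ and an auxiliary $\varepsilon$) and then letting $\varepsilon\to 0$ yields $\Norm{M_0 f}{L^p}\leq e^{1/p}\Norm{f}{L^p}$, which is the claimed bound with $c_d=e$ in the dyadic case.

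For the continuous maximal operator $M$ in $\R^d$, the same scheme works verbatim, except that one replaces the dyadic Doob bound by the Hardy--Littlewood--Wiener $L^q$ inequality $\Norm{M}{\mathscr{B}(L^q)}\leq A_d\cdot q'$. Picking, for instance, $s=p/2$ so that $p/s=2$, this produces
\begin{equation*}
  \Norm{M_0 f}{L^p}\leq (2A_d)^{2/p}\Norm{f}{L^p}= c_d^{1/p}\Norm{f}{L^p},\qquad c_d:=(2A_d)^2,
\end{equation*}
which gives a dimensional constant $c_d$ valid uniformly in $p\in(0,\infty)$. No step presents a genuine obstacle; the only subtlety is that the sharp constant $e$ is only attained in the limit $s\to 0^+$, so one must allow an $\varepsilon$-loss in the intermediate estimate and then pass to the limit.
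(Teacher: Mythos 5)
Your proof is correct and follows essentially the same route as the paper: the pointwise bound $M_0 f\leq (M\abs{f}^s)^{1/s}$ via Jensen, the $L^{p/s}$ bound for $M$ with constant $(p/s)'$, and the limit $s\to 0^+$ (equivalently $q=p/s\to\infty$) to extract $(q')^q\to e$ in the dyadic case, with a fixed exponent ($q=2$) giving the dimensional constant in the continuous case. The only cosmetic difference is the parametrization ($s$ versus $q=p/s$); the estimates and constants are identical.
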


\begin{proof}
By Jensen's inequality and the basic properties of the logarithm, we have
\begin{equation*}
  M_0 f\leq Mf,\qquad M_0 f=(M_0\abs{f}^{1/q})^q\leq(M\abs{f}^{1/q})^q,\qquad q\in(0,\infty),
\end{equation*}
where $M$ is the Hardy--Littlewood maximal operator, or the dyadic maximal operator in the case of dyadic $M_0$.
By the $L^q$ boundedness of the usual maximal function for $q>1$, we have
\begin{equation*}
  \int[M_0 f]^p\leq\int[M\abs{f}^{p/q}]^q
  \leq(C_d\cdot q')^q\int(\abs{f}^{p/q})^q=(C_d\cdot q')^q\int\abs{f}^p.
\end{equation*}
In the non-dyadic case, we simply take, say, $q=2$, giving the claim with $c_d=(2C_d)^2$. In the dyadic case, we have $C_d=1$, and we can take the limit $q\to\infty$, which gives
\begin{equation*}
  (q')^q=\Big(\frac{q}{q-1}\Big)^q=\Big(1+\frac{1}{q-1}\Big)^q\to e,
\end{equation*}
and hence $\Norm{M_0 f}{L^p}^p\leq e\Norm{f}{L^p}^p$.
\end{proof}

\begin{proposition}\label{prop:2Ainftys}
We have $[w]_{A_\infty}'\leq c_d[w]_{A_\infty}$, where $c_d$ is as in Lemma~\ref{lem:logMaxFn}.
\end{proposition}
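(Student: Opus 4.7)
The plan is to exploit the $L^1$ instance of Lemma~\ref{lem:logMaxFn}, namely $\int_{\R^d} M_0 f\ud x \le c_d\int_{\R^d}|f|\ud x$, together with the defining Hru\v{s}\v{c}ev inequality $\avgint_R w \le [w]_{A_\infty}\exp\bigl(\avgint_R\log w\bigr)$ valid for every cube $R$.

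Fix a cube $Q$. First I would note that for $x\in Q$ and any cube $R\ni x$ with $R\subseteq Q$, the identity $\log w = \log(w\chi_Q)$ on $R$, combined with the Hru\v{s}\v{c}ev bound, yields
\begin{equation*}
\avgint_R w \;\le\; [w]_{A_\infty}\exp\Bigl(\avgint_R\log(w\chi_Q)\Bigr) \;\le\; [w]_{A_\infty}\,M_0(w\chi_Q)(x).
\end{equation*}
Taking the supremum over such $R$ thus controls the ``local'' part $\sup_{R\ni x,\,R\subseteq Q}\avgint_R(w\chi_Q)$ of $M(w\chi_Q)(x)$. Note that for $R\ni x$ with $R\not\subseteq Q$, the log-average $\avgint_R\log(w\chi_Q)$ equals $-\infty$, so these cubes contribute nothing to $M_0(w\chi_Q)(x)$; this is the source of the difficulty in the next step.

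The next step is to handle cubes $R\ni x$ with $R\not\subseteq Q$. When $|R|\ge|Q|$, the trivial estimate $|R|^{-1}w(R\cap Q)\le \avgint_Q w$ suffices, and $\avgint_Q w$ is itself dominated by $[w]_{A_\infty}\,M_0(w\chi_Q)(x)$ upon applying the Hru\v{s}\v{c}ev inequality to $R=Q$. When $|R|<|Q|$ but $R$ still crosses $\partial Q$, so that $x$ lies within distance $\ell(R)$ of $\partial Q$, I would invoke the Okikiolu--Mei shifted dyadic grids to locate a comparable cube $R'\subseteq Q$ with $R'\ni x$, $|R'|\eqsim_d|R|$, and $\avgint_R(w\chi_Q)\lesssim_d \avgint_{R'}w$, thus reducing to the preceding local case.

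Combining both pieces would yield the pointwise majorization $M(w\chi_Q)(x)\le C_d[w]_{A_\infty}\,M_0(w\chi_Q)(x)$ for $x\in Q$. Integrating over $Q$ and applying Lemma~\ref{lem:logMaxFn} with $p=1$,
\begin{equation*}
\int_Q M(w\chi_Q)\ud x \;\le\; C_d[w]_{A_\infty}\int_{\R^d} M_0(w\chi_Q)\ud x \;\le\; c_d\,[w]_{A_\infty}\,w(Q),
\end{equation*}
and then $[w]_{A_\infty}' \le c_d\,[w]_{A_\infty}$ follows upon dividing by $w(Q)$ and taking the supremum. The main obstacle will be precisely the second step -- controlling the cubes $R\ni x\in Q$ with $R\not\subseteq Q$ and $|R|<|Q|$, i.e.\ small cubes straddling $\partial Q$. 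The intuitive swap with a comparable cube inside $Q$ is natural, but executing it with a single clean dimensional constant benefits from the shifted dyadic grid technology or a careful decomposition based on the distance of $x$ from $\partial Q$.
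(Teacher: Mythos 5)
Your strategy is exactly the paper's: dominate $M(w\chi_Q)$ pointwise on $Q$ by $[w]_{A_\infty}M_0(w\chi_Q)$ via the Hru\v{s}\v{c}ev inequality, then integrate and invoke the $L^1$ case of Lemma~\ref{lem:logMaxFn}. The only divergence is in the step you single out as the main obstacle, and there your proposal is both harder than necessary and, as stated, not quite workable. The paper disposes of the localization in one line by noting that for $x\in Q$ one has the exact identity $M(w\chi_Q)(x)=\sup_{R\ni x,\,R\subseteq Q}\avgint_R w$, with no constant loss and no case analysis. The reason is elementary: if $R\ni x$ is any cube, then $R\cap Q$ is a box all of whose sides have length at most $s:=\min(\ell(R),\ell(Q))$, so it is contained in some cube $R'$ with $\ell(R')=s$, $R\cap Q\subseteq R'\subseteq Q$, and $|R'|\leq|R|$; hence $\avgint_R(w\chi_Q)=|R|^{-1}\int_{R\cap Q}w\leq\avgint_{R'}w$. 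This handles both your cases ($|R|\geq|Q|$ and small cubes straddling $\partial Q$) at once. By contrast, the shifted dyadic grids of the kind used in the proof of Theorem~\ref{sharpBuckley} produce a comparable cube \emph{containing} $R$, which would generally protrude even further outside $Q$, so they are the wrong tool for manufacturing a cube \emph{inside} $Q$; if you insist on that route you would still need the geometric observation above to re-enter $Q$. With that one-line fix your argument is complete and coincides with the paper's proof.
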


\begin{proof}
For $x\in Q$, it is not difficult to see that for the computation of $M(w\chi_Q)(x)$, it suffices to take the supremum over cubes $R\owns x$ with $R\subseteq Q$:
\begin{equation*}
  M(w\chi_Q)(x)=\sup_{\substack{R\owns x\\ R\subseteq Q}}\avgint_R w,\qquad\forall x\in Q.
\end{equation*}
By definition of $[w]_{A_\infty}$, we have
\begin{equation*}
  \avgint_R w\leq [w]_{A_\infty}\exp\Big(\avgint_R\log w\Big),
\end{equation*}
and hence, taking the supremum over $R$,
\begin{equation*}
  M(w\chi_Q)(x)\leq [w]_{A_\infty} M_0(w\chi_Q)(x),\qquad\forall x\in Q.
\end{equation*}
Integration over $Q$ and application of Lemma~\ref{lem:logMaxFn} now give
\begin{equation*}
  \int_Q M(w\chi_Q)\leq [w]_{A_\infty}\int M_0(w\chi_Q)\leq [w]_{A_\infty}c_d\int w\chi_Q=c_d[w]_{A_\infty}w(Q);
\end{equation*}
thus $[w]_{A_\infty}'\leq c_d[w]_{A_\infty}$.
\end{proof}

It is a well known fact that any $A_{\infty}$ weight satisfies a reverse  reverse H\"older inequality playing a central role in the area. 
In this paper a sharp version of this property will  also play a fundamental role. To be precise if $w \in A_{\infty}$ we define 
$$r(w):=1+\frac{1}{ \tau_d\,[w]_{A_{\infty}}'}$$ 
where $\tau_d$ is a dimensional constant that we may take to be $\tau_d= 2^{11+d}$.  Note that $r(w)'\approx [w]_{A_{\infty}}'$. The result we need is the following.

\begin{theorem}[A new sharp reverse H\"older inequality]\label{thm:SharpRHI}

\

\vspace{0.2cm}

a)  If $w \in A_{\infty}$, then
\begin{equation*}
  \Big(\avgint_Q w^{r(w)}\Big)^{1/r(w)}\leq 2\avgint_Q w.
\end{equation*}

b) Furthermore, the result is optimal up to a dimensional factor:  If a weight $w$ satisfies the reverse H\"older inequality
\begin{equation*}
  \Big(\avgint_Q w^r\Big)^{1/r}\leq K\avgint_Q w,
\end{equation*}
then $[w]_{A_\infty}'\leq  c_d \cdot K\cdot r'$.
%\end{remark}
\end{theorem}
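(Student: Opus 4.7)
For part (a), I would reduce the reverse H\"older estimate to a sharp distributional bound on a local dyadic maximal function. Fix a cube $Q$, write $w_Q:=\avgint_Q w$, and let $\Omega_\lambda:=\{x\in Q:M^d_Q(w\chi_Q)(x)>\lambda\}$, where $M^d_Q$ denotes the dyadic maximal operator relative to $Q$. Using $w\le M^d_Q(w\chi_Q)$ a.e.\ on $Q$ (Lebesgue differentiation) together with Cavalieri applied to $w^{r(w)}=w\cdot w^{r(w)-1}$,
\begin{equation*}
\int_Q w^{r(w)} \le \int_Q w\,(M^d_Q(w\chi_Q))^{r(w)-1} = (r(w)-1)\int_0^\infty\lambda^{r(w)-2}\,w(\Omega_\lambda)\,d\lambda.
\end{equation*}
The low-level piece $\lambda\le w_Q$ contributes at most $w_Q^{r(w)}|Q|$. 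The analytic core is to establish the sharp tail bound
\begin{equation*}
w(\Omega_\lambda) \le c\,w(Q)\,(\lambda/w_Q)^{-\gamma},\qquad \gamma\asymp \frac{1}{[w]_{A_\infty}'},
\end{equation*}
so that the Cavalieri tail integral converges in the range $r(w)-1<\gamma$, with an absolute overall constant below $1$ secured by taking $\tau_d=2^{11+d}$ large enough.

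The distributional decay I would prove by a Calder\'on--Zygmund stopping-time iteration combined with the \emph{local} form of the $A_\infty'$ condition. Writing $\Omega_\lambda=\bigsqcup_j Q_j^\lambda$ for the CZ cubes, the key input is $\int_{Q_j^\lambda}M(w\chi_{Q_j^\lambda})\le [w]_{A_\infty}'\,w(Q_j^\lambda)$, and on any deeper CZ cube $Q_i^{a\lambda}\subset Q_j^\lambda$ one has the lower bound $M(w\chi_{Q_j^\lambda})>a\lambda$. This already yields the one-step Lebesgue decay $|\Omega_{a\lambda}\cap Q_j^\lambda|\le\tfrac12|Q_j^\lambda|$ once $a\ge 2^{d+1}[w]_{A_\infty}'$. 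The main obstacle -- and the heart of the argument -- is to upgrade this Lebesgue decay to a $w$-decay at the sharp rate $\gamma\sim 1/[w]_{A_\infty}'$, since a naive use of CZ-doubling only produces the weaker $\gamma\sim 1/\log[w]_{A_\infty}'$. The resolution exploits the fact that $\avgint_{Q_j^\lambda}w\asymp\lambda$ on any CZ cube, so that the dimensional factors balance precisely when the $A_\infty'$ bound is iterated one further level inside each stopping cube.

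Part (b) is a short consequence of H\"older's inequality together with the sharp Hardy--Littlewood maximal bound $\|M\|_{L^r\to L^r}\le c_d r'$. From the assumed reverse H\"older inequality,
\begin{equation*}
\int_Q M(w\chi_Q) \le |Q|^{1/r'}\Bigl(\int_Q M(w\chi_Q)^r\Bigr)^{1/r} \le c_d\,r'\,|Q|^{1/r'}\Bigl(\int_Q w^r\Bigr)^{1/r} \le c_d\,K\,r'\,w(Q).
\end{equation*}
Dividing by $w(Q)$ and taking the supremum over $Q$ gives $[w]_{A_\infty}'\le c_d\,K\,r'$, as required.
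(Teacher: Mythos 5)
Your part (b) is exactly the paper's argument, and the skeleton of your part (a) — layer-cake for $\int_Q w\,(M^d_Q(w\chi_Q))^{r-1}$ with respect to $w\,dx$, followed by a Calder\'on--Zygmund iteration on the level sets $\Omega_\lambda$, aiming at the polynomial decay $w(\Omega_\lambda)\lesssim w(Q)(\lambda/w_Q)^{-\gamma}$ with $\gamma\asymp 1/[w]_{A_\infty}'$ — is the same as the paper's. The target distributional estimate is also the correct one. But the step you yourself flag as ``the heart of the argument,'' upgrading the Lebesgue decay to $w$-decay, is not actually carried out, and the mechanism you hint at does not work with the parameters you set up. With CZ ratio $a\asymp[w]_{A_\infty}'$ you get $|\Omega_{a\lambda}\cap Q_j^\lambda|\le\tfrac12|Q_j^\lambda|$, but a subset of half the Lebesgue measure of $Q_j^\lambda$ may carry all but an $e^{-c[w]_{A_\infty}'}$ fraction of its $w$-mass (take $w=t\chi_E+\chi_{E^c}$ with $|E\cap Q_j^\lambda|=\tfrac12|Q_j^\lambda|$, for which $[w]_{A_\infty}'\asymp\log t$). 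Summing the trivial bounds $w(Q_i^{a\lambda})\le 2^d a\lambda|Q_i^{a\lambda}|$ over the next-generation cubes and using $w(Q_j^\lambda)\ge\lambda|Q_j^\lambda|$ only yields $w(\Omega_{a\lambda}\cap Q_j^\lambda)\le 4^d[w]_{A_\infty}'\,w(Q_j^\lambda)$, which is vacuous; iterating the $A_\infty'$ condition ``one further level'' does not repair this, because the children at level $a\lambda$ can concentrate essentially all of $w(Q_j^\lambda)$.

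The missing ingredient is the paper's $L\log L$ estimate $\int_Q w\log(e+w/\ave{w}_Q)\le 2^{d+1}\int_Q M(w\chi_Q)$, proved via Stein's reverse weak-type $(1,1)$ inequality, which gives the key implication: $|S|/|Q|<e^{-\la}$ forces $w(S)/w(Q)<2^{d+2}[w]_{A_\infty}'/\la+e^{-\la/2}$. To extract a fixed $w$-decay factor $\tfrac12$ per step from this, one needs $\la\gtrsim[w]_{A_\infty}'$, i.e.\ the one-step Lebesgue ratio must be \emph{exponentially} small in $[w]_{A_\infty}'$; accordingly the paper runs the CZ iteration with ratio $a=2^d e^{L[w]_{A_\infty}'}$, $L=2^{d+4}$, rather than $a\asymp[w]_{A_\infty}'$. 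This still delivers $\gamma=\log 2/\log a\asymp 1/[w]_{A_\infty}'$, so your Cavalieri bookkeeping then closes with $\eps=r(w)-1=1/(2^{11+d}[w]_{A_\infty}')$. Without this lemma (or an equivalent substitute) your proof of part (a) is incomplete.
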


This result is new in the literature and has its own interest. 
In the classical situation most  of the available proofs do not give such explicit constants, which are important for us. Only under the stronger condition of $A_1$ was found and used in a crucial way in \cite{LOP3}. Very recently a very nice proof by A.~de la Torre~\cite{dlT} for the case $[w]_{A_{\infty}}$ was sent to us. Another less precise proof and for the $A_p$ case, $1<p<\infty$, can be found in  \cite{perez-lecturenotes}.

Part b) follows from the boundedness of the maximal function in $L^r$ with constant $c_d r'$: 
\begin{equation*}
\begin{split}
   \avgint_Q  M(\chi_Q w)
   &\leq \Big(\avgint_Q M(\chi_Q w)^r\Big)^{1/r} \\
   &\leq  c_d \cdot r'\Big(\avgint_Q w^r\Big)^{1/r}
   \leq c_d \cdot r'\cdot K\avgint_Q w.
\end{split}
\end{equation*}

\section{The $A_2$ theorem for Calder\'on--Zygmund operators}

The purpose of this section is to prove Theorem \ref{thm:theA2&Ainfty}, namely, the estimate
\begin{equation*}
  \Norm{T}{\mathscr{B}(L^2(w))}\leq c[w]_{A_2}^{1/2}\big([w]_{A_\infty}'+[\sigma]_{A_\infty}'\big)^{1/2},
\end{equation*}
where $c=c_{d,T}$ is a constant depending on the dimension and the operator $T$.

Here and throughout this section, $\sigma=w^{-1}$. This improves on the $A_2$ theorem \cite{Hytonen:A2}:
\begin{equation*}
  \Norm{T}{\mathscr{B}(L^2(w))}\leq c\,[w]_{A_2},
\end{equation*}
and its proof follows the same outline, with the implementation of the $A_\infty$ philosophy at certain key points.

\subsection{Reduction to a dyadic version}

\

Fundamental to this proof strategy is the notion of \emph{dyadic shifts}, which we recall. We work with a \emph{general dyadic system} $\mathscr{D}$, this being a collection of axis-parallel cubes $Q$, whose sidelengths $\ell(Q)$ are of the form $2^k$, $k\in\Z$, where moreover $Q\cap R\in\{Q,R,\varnothing\}$ for any two $Q,R\in\mathscr{D}$, and the cubes of a fixed sidelength $2^k$ form a partition of $\R^d$. Given such a dyadic system, a dyadic shift with parameters $(m,n)$ is an operator of the form
\begin{equation*}
  \sha f=\sum_{K\in\mathscr{D}}A_K f,
  \qquad A_K f=\frac{1}{\abs{K}}\sum_{\substack{I,J\in\mathscr{D};I,J\subseteq K \\ \ell(I)=2^{-m}\ell(K) \\ \ell(J)=2^{-n}\ell(K)}}
     \pair{h_I^J}{f}k_J^I,
\end{equation*}
where $h_I^J$ is a generalized Haar function on $I$ (supported on $I$, constant on its dyadic subcubes, and normalized by $\Norm{h_I^J}{\infty}\leq 1$), and $k_J^I$ on $J$. This implies that $\abs{A_K f}\leq \chi_K\cdot\abs{K}^{-1}\cdot\int_K\abs{f}$. For any subcollection $\mathscr{Q}\subset\mathscr{D}$, we write
\begin{equation}\label{eq:subshift}
  \sha_{\mathscr{Q}}f:=\sum_{K\in\mathscr{Q}}A_K f,
\end{equation}
and we require that $\Norm{\sha_{\mathscr{Q}} f}{L^2}\leq\Norm{f}{L^2}$ for all $\mathscr{Q}\subset\mathscr{D}$. This is automatic from straightforward orthogonality considerations in case we only have cancellative Haar functions with $\int h_I^J=\int k_J^I=0$.

Dyadic shifts with parameters $(0,0)$ are well known in dyadic harmonic analysis under different names. Auscher et al.~\cite{AHMTT} study such operators under the name \emph{perfect dyadic} operators, which they decompose into a sum of a \emph{Haar multiplier} (or \emph{martingale transform}), a \emph{paraproduct}, and a \emph{dual paraproduct}. These three types of operators are of course well-known since a long time. The first dyadic shift (and this name) with parameters $(0,1)$ was introduced by Petermichl~\cite{Petermichl:shift}, and the definition in the above generality was given by Lacey, Petermichl and Reguera \cite{LPR}.

The importance of these dyadic shifts for the analysis of Calder\'on--Zygmund operators comes from the following:

\begin{theorem}[Dyadic representation theorem; \cite{Hytonen:A2,HPTV}, Theorems~4.2 and~4.1]
Let $T\in  \mathscr{B}(L^2(\R^d))$ be a Calder\'on--Zygmund operator which satisfies the standard estimates with the H\"older continuity exponent $\alpha\in(0,1]$. Then it has the representation
\begin{equation*}
  \pair{g}{Tf}
  =c_{T,d}\,\Exp_{\mathscr{D}}\sum_{m,n=0}^{\infty}2^{-(m+n)\alpha/2}\pair{g}{\sha^{mn}_{\mathscr{D}}f},
\end{equation*}
valid for all bounded and compactly supported functions $f$ and $g$, where $\sha^{mn}_{\mathscr{D}}$ is a dyadic shift with parameters $(m,n)$ related to the dyadic system $\mathscr{D}$, and $\Exp_{\mathscr{D}}$ is the expectation with respect to a probability measure on the space of all generalized dyadic systems; see \cite{Hytonen:A2} for the details of the construction of this probability space.
\end{theorem}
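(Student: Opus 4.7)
The plan is to follow the random dyadic systems strategy developed by Nazarov--Treil--Volberg. First I would set up the probability space of dyadic lattices: fix a reference system $\mathscr{D}_0$ and let $\mathscr{D}=\mathscr{D}_0+\omega$, where $\omega=(\omega_k)_{k\in\Z}$ is a random shift with the $\omega_k$ independent and uniformly distributed over a finite set of representatives at scale $2^k$. The measure $\Exp_{\mathscr{D}}$ in the statement is the induced product measure on the space of such $\omega$. I then expand $f$ and $g$ in the cancellative Haar basis of $\mathscr{D}$ so that
\[
\pair{g}{Tf}=\sum_{I,J\in\mathscr{D}}\pair{g}{h_J}\pair{h_J}{Th_I}\pair{h_I}{f},
\]
with multi-indexing in $d$ dimensions left implicit. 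The goal is to re-sum this into a convergent series of dyadic shifts weighted by $2^{-(m+n)\alpha/2}$.

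Next I invoke the good/bad cube dichotomy. Call $I\in\mathscr{D}$ \emph{bad} if there is $J\in\mathscr{D}$ with $\ell(J)\geq 2^r\ell(I)$ and $\dist(I,\partial J)\leq\ell(I)^\gamma\ell(J)^{1-\gamma}$, for parameters $r$ and $\gamma\in(0,1)$ chosen so the probability of being bad is a fixed $\pi_{\bad}<1/2$, uniformly in the size and position of $I$. A standard argument exploiting the translation-invariance of the law of $\omega$ shows that $\Exp_{\mathscr{D}}$ applied to the expansion restricted to good $I$ and $J$ captures $\pair{g}{Tf}$ up to a multiplicative constant $(1-2\pi_{\bad})^{-1}$; the crucial point is that the badness of $I$ depends only on random choices at scales strictly larger than $\ell(I)$, so the conditional expectation of the bad part, given the Haar coefficient, vanishes.

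Then comes the core combinatorial step: group the surviving good pairs $(I,J)$ by their smallest common dyadic ancestor $K=I\vee J\in\mathscr{D}$ and by the indices $m=\log_2(\ell(K)/\ell(I))$ and $n=\log_2(\ell(K)/\ell(J))$. For each $(m,n)$ and $K$, the collected terms yield the operator $A_K$ in the definition of $\sha^{mn}_{\mathscr{D}}$ in \eqref{eq:subshift}. The key analytic input is a pointwise bound: when $I$ and $J$ are good and sit in distinct children of $K$, combining the H\"older kernel estimate with the geometric separation $\dist(I,\partial K')\gtrsim\ell(I)^\gamma\ell(K)^{1-\gamma}$ (for $K'$ the child of $K$ containing $I$) and the cancellation $\int h_I=0$ gives $|\pair{h_J}{Th_I}|\lesssim 2^{-(m+n)\alpha/2}|K|^{-1}|I|^{1/2}|J|^{1/2}$, after choosing, for instance, $\gamma=\alpha/(2(\alpha+d))$. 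This is exactly the normalization that lets $2^{(m+n)\alpha/2}A_K$ be identified with a dyadic-shift piece with $\|k_J^I\|_\infty\leq 1$, and hence the summed operator with a legitimate shift of parameters $(m,n)$.

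The principal obstacle is the \emph{nested} case $I\subseteq J$ or $J\subseteq I$ (i.e.\ $m=0$ or $n=0$), where one Haar function does not see the kernel as a smooth function over the support of the other, so the kernel bound above fails. I would handle these contributions by reassembling them as \emph{paraproduct} and dual-paraproduct components, allowing the non-cancellative option $h_I^I=|I|^{-1/2}\chi_I$ in the general dyadic shift; the required uniform $L^2$ boundedness then follows from Carleson embedding together with $T1,\,T^*1\in\BMO$, which is a consequence of the hypothesis $T\in\mathscr{B}(L^2(\R^d))$. Absolute convergence of the double series over $(m,n)$ is guaranteed by the factor $2^{-(m+n)\alpha/2}$ combined with the uniform bound $\Norm{\sha^{mn}_{\mathscr{D}}}{\mathscr{B}(L^2)}\leq 1$; the minor nuisance that a generic random $\mathscr{D}$ has no largest cube is absorbed by the standard limiting procedure on an exhausting sequence of reference cubes described in \cite{Hytonen:A2}.
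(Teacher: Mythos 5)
This theorem is not proved in the paper at all: it is imported verbatim from \cite{Hytonen:A2,HPTV}, so there is no in-paper argument to compare against. Your sketch does accurately reproduce the strategy of the cited proof --- random translated dyadic lattices with a product measure, Haar expansion, the NTV good/bad dichotomy with $\gamma=\alpha/(2(\alpha+d))$, regrouping of good pairs by the minimal common ancestor $K=I\vee J$ into shifts of parameters $(m,n)$, and paraproducts built from $T1,T^*1\in\BMO$ (made admissible by the non-cancellative Haar functions and the Carleson embedding, which is exactly why the paper's definition of a shift demands uniform $L^2$ bounds for all subcollections rather than orthogonality).

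Two points deserve correction or expansion. First, the justification of the good/bad reduction is misstated: the expectation of the bad part does not vanish. The actual mechanism is that the goodness indicator of $I$ depends only on $\omega_j$ at scales $\geq\ell(I)$, while the cube $I\dotplus\omega$ as a set (hence $\pair{h_I}{f}$) depends only on the smaller scales, so by independence $\Exp_{\mathscr{D}}\big[1_{\good}(I)X_I\big]=\pi_{\good}\,\Exp_{\mathscr{D}}[X_I]$, which removes the restriction at the cost of a factor $\pi_{\good}^{-1}$. Moreover one cannot naively impose goodness on \emph{both} $I$ and $J$ and still get an exact identity, since the two goodness events are not independent of the remaining data; the argument of \cite{Hytonen:A2} restricts only the smaller cube to be good (the alternative, NTV-style, route absorbs the doubly-restricted error using the a priori bound $T\in\mathscr{B}(L^2)$, which is where a constant like $(1-2\pi_{\bad})^{-1}$ would come from --- but then you must say how that error term is estimated). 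Second, your kernel estimate only covers good pairs that are genuinely separated; the finitely many scales $m\leq r$ (cubes of comparable size in adjacent children of $K$, where goodness gives no separation) must be handled separately via the $L^2$ bound or weak boundedness, and the resulting decay is naturally $2^{-\max(m,n)\alpha/2}$ rather than $2^{-(m+n)\alpha/2}$; the two forms differ only by halving the exponent, which is harmless for every application in this paper, but your claimed pointwise bound with $(m+n)$ does not follow from the single use of cancellation in the smaller cube that you describe.
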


This result was preceded by several versions restricted to special operators $T$: the Beurling--Ahlfors transform by Dragi{\v{c}}evi{\'c} and Volberg \cite{DV}, the Hilbert transform by  Petermichl \cite{Petermichl:shift}, the Riesz transforms by Petermichl, Treil and Volberg \cite{PTV:Riesz}, and all one-dimensional convolution operators with an odd, smooth kernel by Vagharshakyan \cite{Vagharshakyan}. An immediate consequence of the dyadic representation theorem is that Theorem~\ref{thm:theA2&Ainfty} will be a consequence of the following dyadic version: (Similarly, the special cases of the representation theorem all played a role in proving the $A_2$ theorem for the mentioned particular operators.)

\begin{theorem}\label{thm:shaAinfty}
Let $\sha$ be a dyadic shift with parameters $(m,n)$, and $r=\max\{m,n\}$. For $w\in A_2$ and $\si=w^{-1}$, we have
\begin{equation*}
  \Norm{\sha f}{L^2(w)}\leq C(r+1)^2[w]_{A_2}^{1/2}\big([w]_{A_\infty}'+[\si]_{A_{\infty}}'\big)^{1/2}\Norm{f}{L^2(w)}.
\end{equation*}
\end{theorem}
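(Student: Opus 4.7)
The plan is to replicate the proof of the dyadic $A_{2}$ theorem for shifts from \cite{Hytonen:A2,HPTV} and to relax one of its two implicit occurrences of $[w]_{A_{2}}^{1/2}$ to the weaker $A_{\infty}'$-constant; the dual testing inequality then supplies the other $A_{\infty}'$-summand. By the Nazarov--Treil--Volberg two-weight $T1$-theorem for dyadic shifts, invoked in \cite{HPTV}, the bound on $\Norm{\sha}{\mathscr{B}(L^{2}(w))}$ is reduced to controlling the two testing constants
\begin{equation*}
  \mathfrak{T}(\sha):=\sup_{Q\in\mathscr{D}}\frac{\Norm{\sha(\si\chi_{Q})}{L^{2}(w)}}{\si(Q)^{1/2}},\qquad
  \mathfrak{T}(\sha^{*}):=\sup_{Q\in\mathscr{D}}\frac{\Norm{\sha^{*}(w\chi_{Q})}{L^{2}(\si)}}{w(Q)^{1/2}},
\end{equation*}
up to an additive $[w]_{A_{2}}^{1/2}$ that is already subsumed by the target bound. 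By the symmetry $(w,\sha)\leftrightarrow(\si,\sha^{*})$, it suffices to prove $\mathfrak{T}(\sha)\lesssim (r+1)^{2}[w]_{A_{2}}^{1/2}[\si]_{A_{\infty}}'^{1/2}$; the dual testing inequality furnishes the $[w]_{A_{\infty}}'^{1/2}$ summand by the same method with $w$ and $\si$ interchanged, and the sum of the two pieces dominates $([w]_{A_{\infty}}'+[\si]_{A_{\infty}}')^{1/2}$ up to an absolute constant.

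To estimate $\mathfrak{T}(\sha)$, I follow the HPTV approach: expand $\sha=\sum_{K\subseteq Q}A_{K}$, use the pointwise bound $|A_{K}(\si\chi_{Q})|\leq(\avgint_{K}\si)\chi_{K}$, exploit the quasi-orthogonality furnished by the Haar expansion of the shift, and account for the combinatorics of the $2^{-m}$- and $2^{-n}$-scale children $I,J\subseteq K$. This produces a dominating sum of the form
\begin{equation*}
  \Norm{\sha(\si\chi_{Q})}{L^{2}(w)}^{2}\lesssim (r+1)^{4}\sum_{K\subseteq Q}c_{K}\Bigl(\avgint_{K}\si\Bigr)^{2}w(K),
\end{equation*}
with nonnegative combinatorial coefficients $c_{K}$. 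Applying the $A_{2}$-condition $(\avgint_{K}\si)(\avgint_{K}w)\leq [w]_{A_{2}}$ \emph{once} reduces the right-hand side to
\begin{equation*}
  (r+1)^{4}[w]_{A_{2}}\sum_{K\subseteq Q}c_{K}\si(K).
\end{equation*}
The HPTV argument now applies $A_{2}$ a \emph{second} time via a Carleson-embedding/stopping-time estimate, producing $[w]_{A_{2}}\si(Q)$ and hence the classical $(r+1)^{2}[w]_{A_{2}}$ bound for $\mathfrak{T}(\sha)$. The improvement is that this second step depends only on the $A_{\infty}$-behavior of $\si$: after organizing $\{K\}$ via a principal-cube stopping construction for $\si$, one checks that the remaining sum is controlled by an integrated maximal function,
\begin{equation*}
  \sum_{K\subseteq Q}c_{K}\si(K)\lesssim\int_{Q}M(\si\chi_{Q})\ud x\leq [\si]_{A_{\infty}}'\si(Q),
\end{equation*}
the final inequality being precisely Wilson's definition of the primed constant. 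Combining yields $\mathfrak{T}(\sha)\lesssim (r+1)^{2}[w]_{A_{2}}^{1/2}[\si]_{A_{\infty}}'^{1/2}$, and by symmetry $\mathfrak{T}(\sha^{*})\lesssim (r+1)^{2}[w]_{A_{2}}^{1/2}[w]_{A_{\infty}}'^{1/2}$, which together give the theorem.

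The principal obstacle lies in the last displayed estimate: one must arrange the stopping construction and identify the combinatorial coefficients $c_{K}$ so that the Carleson sum for $(c_{K})$ against $\si$ is dominated \emph{precisely} by Wilson's primed constant $[\si]_{A_{\infty}}'$, rather than by the larger Hru\v{s}\v{c}ev $[\si]_{A_{\infty}}$ or by $[w]_{A_{2}}$ (which the naive argument would yield). This is the ``modification at critical points'' hinted at just before the theorem statement, and it is the step that captures the improvement from $[w]_{A_{2}}$ to $[w]_{A_{2}}^{1/2}([w]_{A_{\infty}}'+[\si]_{A_{\infty}}')^{1/2}$; the rest of the argument is the HPTV proof read with this single modification in mind.
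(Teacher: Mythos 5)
Your proposal follows essentially the same route as the paper: reduce to the two dual testing conditions via the HPTV two-weight theorem for shifts (Theorem~\ref{thm:HPTV}), run the HPTV principal-cube machinery to arrive at a Carleson-type sum, apply the $A_2$ condition exactly once, and then bound the remaining sum of $\si(S)$ over principal cubes by $\int_Q M(\si\chi_Q)\le[\si]_{A_\infty}'\si(Q)$ using the sparseness $|E(S)|\ge\tfrac12|S|$ --- this last observation is precisely the paper's key Lemma~\ref{lem:princAinfty}. The only discrepancies are cosmetic: the testing constants need (and the paper obtains) only a single factor $(r+1)$, so that the reduction step yields the stated $(r+1)^2$ overall rather than $(r+1)^3$, and the ``quasi-orthogonality'' you invoke is made precise in the paper via the exponential distributional estimate \eqref{eq:expEst} combined with the pigeonholing of cubes by their local $A_2$ characteristic \eqref{eq:fixA2}.
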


The weighted norm of the shifts, in turn, is most conveniently deduced with the help of the following characterization of their boundedness in a two-weight situation:

\begin{theorem}[\cite{HPTV}, Theorem~3.4]\label{thm:HPTV}
Let $\sha$ be a dyadic shift with parameters $(m,n)$, and $r=\max\{m,n\}$. If for all $Q\in\mathscr{D}$ and some $B$ there holds
\begin{equation*}
  \Big(\int_Q\abs{\sha(\chi_Q\si)}^2 w\Big)^{1/2}\leq B\si(Q)^{1/2},\qquad
  \Big(\int_Q\abs{\sha^*(\chi_Q w)}^2\si\Big)^{1/2} \leq B w(Q)^{1/2},
\end{equation*}
then for a dimensional constant $c$, we have
\begin{equation*}
  \Norm{\sha(f\si)}{L^2(w)}\leq c\,\big((r+1)B+(r+1)^2(A_2[w,\si])^{1/2}\big)\Norm{f}{L^2(\si)},
\end{equation*}
where  $A_2[w,\si]$ is defined by the functional
\begin{equation*}
A_2[w,\si]:=\sup_Q\Big(\avgint_Q w\Big)\Big(\avgint_Q \si\Big).
\end{equation*}
\end{theorem}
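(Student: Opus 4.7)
The plan is to prove this two-weight $T1$-type theorem via a bilinear form estimate, in the spirit of Nazarov--Treil--Volberg. By polarization and duality, it suffices to control
$$\Lambda(f,g):=\langle\sha(f\si),g\,w\rangle=\sum_{K\in\mathscr{D}}\frac{1}{|K|}\sum_{\substack{I,J\subseteq K\\ \ell(I)=2^{-m}\ell(K)\\ \ell(J)=2^{-n}\ell(K)}}\langle h_I^J,f\si\rangle\,\langle k_J^I,g\,w\rangle$$
by $c\bigl[(r+1)B+(r+1)^2\sqrt{A_2[w,\si]}\bigr]\|f\|_{L^2(\si)}\|g\|_{L^2(w)}$, uniformly over bounded, compactly supported test functions $f,g$.

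I would first construct two stopping-time families of \emph{principal cubes}: $\mathcal{F}\subset\mathscr{D}$ for $f$ (stopping whenever the $\si$-average of $|f|$ doubles) and $\mathcal{G}\subset\mathscr{D}$ for $g$ (stopping when the $w$-average of $|g|$ doubles). Both families obey weighted Carleson packing, $\sum_{F'\in\mathcal{F},F'\subseteq F}\si(F')\leq 2\si(F)$, and on each maximal chain of cubes between two successive generations of $\mathcal{F}$ the $\si$-average of $|f|$ is comparable to its value on the top cube. For $K\in\mathscr{D}$ write $\pi_\mathcal{F}(K)$, $\pi_\mathcal{G}(K)$ for the smallest members of $\mathcal{F}$, respectively $\mathcal{G}$, containing $K$. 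The next step is to split $\Lambda=\Lambda_1+\Lambda_2$ according to whether $\pi_\mathcal{F}(K)\supseteq\pi_\mathcal{G}(K)$ or not; by the symmetry of the hypotheses under $(\sha,f,\si)\leftrightarrow(\sha^*,g,w)$, only $\Lambda_1$ needs to be treated.

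Within $\Lambda_1$, I would isolate the ``diagonal'' terms, in which both $I$ and $J$ reach down to strict descendants in $\mathcal{F}$, respectively $\mathcal{G}$. For these, the Haar coefficient $\langle h_I^J,f\si\rangle$ may be replaced by $\langle f\rangle_F^\si\cdot\langle h_I^J,\si\rangle$ plus a telescoping error controlled by principal-cube jumps, so that the inner sum collapses into expressions of the form $\langle f\rangle_F^\si\,A_K(\chi_F\si)$. Summing over $K$, applying Cauchy--Schwarz, and invoking the testing hypothesis $\bigl(\int_F|\sha(\chi_F\si)|^2 w\bigr)^{1/2}\leq B\,\si(F)^{1/2}$ together with the Carleson packing of $\mathcal{F}$ would produce the bound $c(r+1)B\,\|f\|_{L^2(\si)}\|g\|_{L^2(w)}$; the factor $r+1$ arises from a pigeonhole over the at most $r+1$ possible relative scales of $I$ above its first principal descendant in $F$.

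The remaining ``paraproduct'' terms come from replacing $f$ or $g$ by its stopping-cube average and produce residual paraproduct operators indexed by $\mathcal{F}$ and $\mathcal{G}$. Bounding these via weighted bilinear Carleson embeddings of Nazarov--Treil--Volberg type would introduce the factor $\sqrt{A_2[w,\si]}$, using only the pointwise inequality $\langle w\rangle_Q\langle\si\rangle_Q\leq A_2[w,\si]$. The main obstacle, and the technical heart of the argument, will be achieving the \emph{quadratic} complexity $(r+1)^2$ rather than something exponential: the two scale gaps $\ell(K)/\ell(I)=2^m$ and $\ell(K)/\ell(J)=2^n$ each contribute one factor of $r+1$ through iterated Carleson embeddings, and it is crucial that the geometric series arising at each of the $r+1$ intermediate scales be summed exactly once rather than accumulating a factor $2^r$. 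Obtaining this polynomial blow-up requires a careful grouping of the telescoping error terms across scales, and is the combinatorial core of the argument in \cite{HPTV}.
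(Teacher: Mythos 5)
This theorem is not proved in the present paper at all: it is imported verbatim from \cite{HPTV}, Theorem~3.4, so your attempt has to be measured against the argument given there. Your outline correctly identifies the Nazarov--Treil--Volberg framework (duality, corona/stopping cubes, paraproducts plus error terms), but it stops exactly where the theorem begins. The only reason the statement is worth quoting here is its quantitative content --- the factors $(r+1)B$ and $(r+1)^2(A_2[w,\si])^{1/2}$, which must remain summable against $2^{-(m+n)\alpha/2}$ in the dyadic representation theorem --- and you explicitly defer the derivation of this polynomial dependence on $r$ to ``the combinatorial core of the argument in \cite{HPTV}''. Since \cite{HPTV}, Theorem~3.4 \emph{is} the statement to be proved, that is circular: you have sketched the shape of a proof while omitting the step that separates this theorem from a purely qualitative $T1$ statement.

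There are also two structural problems. First, the roles of the two hypotheses are scrambled: in the NTV scheme the paraproduct components --- the sums $\sum_F\ave{f}_F^{\si}\,\langle\,\cdot\,,\sha(\chi_F\si)\rangle$ obtained by replacing $f$ with its stopping averages --- are precisely the parts that \emph{cannot} be bounded by $A_2[w,\si]$ alone (otherwise the testing hypotheses would be superfluous); they are handled by a Carleson embedding whose Carleson measure condition is verified \emph{from the testing condition} $\Norm{\chi_Q\sha(\chi_Q\si)}{L^2(w)}\leq B\si(Q)^{1/2}$. The pieces that need only $A_2[w,\si]$ are the disjoint and comparable-scale blocks together with the telescoping errors, and these your proposal never actually estimates; the claim that the ``residual paraproduct operators'' are controlled ``using only the pointwise inequality $\ave{w}_Q\ave{\si}_Q\leq A_2[w,\si]$'' is therefore untenable. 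Second, your ``diagonal'' reduction produces $\ave{f}_F^{\si}\sum_{K:\pi_{\mathcal{F}}(K)=F}A_K(\chi_F\si)$, i.e.\ a \emph{sub}-shift $\sha_{\mathscr{Q}}$ applied to $\chi_F\si$, whereas the testing hypothesis controls only the full sum $\sha(\chi_F\si)$ on $F$; passing from full testing to testing of arbitrary subcollections in $L^2(w)$ does not follow automatically (the orthogonality of the $A_K$ is unweighted). In \cite{HPTV} this is circumvented by localizing on the dual side through the weighted martingale projections of $g$, so that the testing condition directly furnishes the required Carleson condition. As it stands, the proposal is a plausible roadmap rather than a proof.
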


Observing that for $\si=w^{-1}$, the last bound is equivalent to
\begin{equation*}
  \Norm{\sha f}{L^2(w)}\leq c\,\big((r+1)B+(r+1)^2 [w]_{A_2}^{1/2}\big)\Norm{f}{L^2(w)},
\end{equation*}
and $[w]_{A_\infty},[\sigma]_{A_\infty}\geq 1$, we are reduced to estimating the quantity $B$ for $\si=w^{-1}$. Since $\sha$ and $\sha^*$ are operators of the same form, and by the symmetry of $w$ and $\sigma$, Theorem~\ref{thm:HPTV} shows that proving Theorem~\ref{thm:shaAinfty} amounts to showing that
\begin{equation*}
  \Big(\int_Q\abs{\sha(w\chi_Q)}^2\si\Big)^{1/2}\leq c\,(r+1)\big([w]_{A_2}[w]_{A_\infty}' w(Q)\big)^{1/2}.
\end{equation*}
We observe that
\begin{equation*}
  \sha(w\chi_Q)
  =\sum_{K\subseteq Q}A_K(w\chi_Q)+\sum_{K\supset Q}A_K(w\chi_Q),
\end{equation*}
and it suffices to consider the two parts separately. The big cubes are immediately handled by the maximal function estimate (see Corollary \ref{CorsharpBuckley}):
\begin{equation}\label{eq:bigCubes}
\begin{split}
   \int_Q&\Babs{\sum_{K\supset Q}A_K(w\chi_Q)}^2\si
   \leq\int_Q\Big(\sum_{K\supset Q}\frac{w(Q)}{\abs{K}}\chi_K\Big)^2\si \\
   &\lesssim\int_Q M_d(w\chi_Q)^2\si\leq[\si]_{A_2}[w]_{A_\infty}' w(Q)=[w]_{A_2}[w]_{A_\infty}' w(Q).
\end{split}
\end{equation}
Hence, to prove Theorem~\ref{thm:shaAinfty}, we are reduced to showing that
\begin{equation}\label{eq:sha2prove}
  \Big(\int_Q\Babs{\sum_{K\subseteq Q}A_K(w\chi_Q)}^2\si\Big)^{1/2}\leq c\,(r+1)\big([w]_{A_2}[w]_{A_\infty}' w(Q)\big)^{1/2}.
\end{equation}
This is the goal for the rest of this section.

\subsection{Proof of the key estimate~\eqref{eq:sha2prove}}

\

We follow the key steps from \cite{Hytonen:A2,HPTV,LPR}.
The collection $\{K\in\mathscr{D}:K\subseteq Q\}$ is first split into $(r+1)$ subcollections according to the value of $\log_2\ell(K)\mod(r+1)$; we henceforth work with one of these subcollections, which we denote by $\mathscr{K}$. This is the step which introduces the factor $(r+1)$, and we will estimate $\sha_{\mathscr{K}}(w\chi_Q)$ with a bound independent of $r$.

The collection $\mathscr{K}$ is further divided into the sets $\mathscr{K}^a$ of those cubes with
\begin{equation}\label{eq:fixA2}
  2^a<\frac{w(Q)}{\abs{Q}}\frac{\si(Q)}{\abs{Q}}\leq 2^{a+1},
\end{equation}
where $a\leq\log_2[w]_{A_2}$.

Among the cubes $K\in\mathscr{K}^a$, we choose the principal cubes $\mathscr{S}^a=\bigcup_{k=0}^{\infty}\mathscr{S}^a_k$ so that $\mathscr{S}^a_0$ consists of the maximal cubes in $\mathscr{K}^a$, and $\mathscr{S}^a_k$ the maximal cubes $S\in\mathscr{K}^a$ contained in some $S'\in\mathscr{S}^a_{k-1}$ with $\si(S)/\abs{S}>2\si(S')/\abs{S'}$. Then
\begin{equation*}
  \mathscr{K}^a=\bigcup_{S\in\mathscr{S}^a}\mathscr{K}^a(S),\qquad\mathscr{K}^a(S):=
  \{K\in\mathscr{K}^a|K\subseteq S,\not\exists S':K\subseteq S'\subset S\}.
\end{equation*}
It follows that
\begin{equation}\label{eq:shaSplit}
  \sha_{\mathscr{K}}(w\chi_Q)
  =\sum_{a\leq\log_2[w]_{A_2}}\sum_{S\in\mathscr{S}^a}\sha_{\mathscr{K}^a(S)}(w\chi_Q),
\end{equation}
where we use the notation from \eqref{eq:subshift}.

To proceed, we recall the following distributional estimate:

\begin{lemma}[\cite{HPTV}, Eq.~(5.26)]
With notation as above, we have
\begin{equation}\label{eq:expEst}
  \sigma\Big(\abs{\sha_{\mathscr{K}^a(S)}(w\chi_Q)}>t\ave{w}_S\Big)\leq Ce^{-ct}\sigma(S),\qquad\forall S\in\mathscr{S}^a,
\end{equation}
where the constants $C$ and $c$ are at worst dimensional. 
\end{lemma}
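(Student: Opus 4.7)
The plan is to establish the sub-exponential tail bound by a John--Nirenberg-type self-improvement scheme adapted to the stopping-time tree $\mathscr{S}^a$. As a first reduction, the single-block bound $\abs{A_K g} \le \chi_K \ave{\abs{g}}_K$ gives the pointwise majorization
$$\abs{\sha_{\mathscr{K}^a(S)}(w\chi_Q)(x)} \;\le\; \sum_{K\in\mathscr{K}^a(S),\,K\ni x} \ave{w}_K \;=:\; F(x),$$
and it suffices to prove the same tail bound for $F$ in place of $\abs{\sha_{\mathscr{K}^a(S)}(w\chi_Q)}$.

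Next I would establish a baseline $L^1(\sigma)$ inequality $\int_S F\,d\sigma \le C\ave{w}_S\,\sigma(S)$. Writing the left side as $\sum_{K\in\mathscr{K}^a(S)} \ave{w}_K\sigma(K)$ and combining the two stopping-time properties $\ave{\sigma}_K \le 2\ave{\sigma}_S$ for $K\in\mathscr{K}^a(S)$ and $\ave{w}_K\ave{\sigma}_K \le 2^{a+1} < 2\ave{w}_S\ave{\sigma}_S$ for $K\in\mathscr{K}^a$, each term is bounded by $2\ave{w}_S\,\sigma(S)\,\abs{K}/\abs{S}$, and a Carleson packing estimate $\sum_{K\in\mathscr{K}^a(S)}\abs{K} \lesssim \abs{S}$ (itself a direct consequence of the stopping rule, which permits only a factor-two growth of the $\sigma$-averages inside the corona before the descent terminates) closes the baseline.

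The heart of the argument is the one-step shrinkage
$$\sigma\bigl(\{F > (k+1)\lambda\ave{w}_S\}\cap S\bigr) \;\le\; \tfrac12\,\sigma\bigl(\{F > k\lambda\ave{w}_S\}\cap S\bigr),\qquad k=0,1,2,\dots,$$
for a sufficiently large fixed constant $\lambda$, from which iteration yields $\sigma(F > N\lambda\ave{w}_S) \le 2^{-N}\sigma(S)$, equivalent to the claimed $Ce^{-ct}\sigma(S)$. I would prove this step by a dyadic Calder\'on--Zygmund decomposition at level $k\lambda\ave{w}_S$: on the maximal cubes $\{Q_j\}$ where the local average of $F$ first exceeds this threshold, the contribution of cubes $K\in\mathscr{K}^a(S)$ that strictly contain $Q_j$ is constant on $Q_j$ and pinned near $k\lambda\ave{w}_S$, while the contribution of cubes $K\subseteq Q_j$ is controlled by the analogous objects $F_{S'}$ attached to the principal descendants $S'\in\mathscr{S}^a_{k+1}$ lying under $Q_j$. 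Since the stopping rule forces $\ave{w}_{S'} < \tfrac12\ave{w}_S$ (combine $\ave{\sigma}_{S'} > 2\ave{\sigma}_S$ with $\ave{w}_{S'}\ave{\sigma}_{S'}\le 2^{a+1}$ and $\ave{w}_S\ave{\sigma}_S > 2^a$), another application of the baseline bound at each descendant together with Chebyshev produces the desired factor~$1/2$.

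The main obstacle is that the pointwise majorization by $F$ throws away the Haar cancellation built into each block $A_K$, so the constants obtained this way may be coarser than those reported in \cite{HPTV}. To recover the sharp exponential rate one likely has to work directly with the signed shift $\sha_{\mathscr{K}^a(S)}$ and exploit the $L^2$-orthogonality of its blocks across layers of the corona, as in the original argument there.
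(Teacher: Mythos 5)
The lemma is not proved in this paper at all --- it is imported verbatim from \cite{HPTV}, Eq.~(5.26) --- but independently of that, your argument has a fatal gap at its very first step. The majorization $\abs{\sha_{\mathscr{K}^a(S)}(w\chi_Q)}\le F:=\sum_{K\in\mathscr{K}^a(S)}\ave{w}_K\chi_K$ is a correct pointwise inequality but not a valid reduction, because $F$ does not satisfy the asserted tail bound; it can even be identically $+\infty$. The stopping rule generating $\mathscr{S}^a$ only prevents the averages $\ave{\si}_K$ from \emph{doubling} inside a corona; it does not bound the number of corona cubes through a point. Take $w=\si\equiv 1$: every dyadic cube lies in the single family $\mathscr{K}^a$ with $a=-1$, the stopping condition $\ave{\si}_{S'}>2\ave{\si}_S$ never fires, the corona has infinite depth, and $F\equiv\infty$, while the shift itself is perfectly controlled (cancellative blocks annihilate constants). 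For the same reason your ``Carleson packing estimate'' $\sum_{K\in\mathscr{K}^a(S)}\abs{K}\lesssim\abs{S}$ is false --- the stopping rule controls the oscillation of averages, not the cardinality of the corona --- so the baseline $L^1(\si)$ inequality collapses as well. Two further problems in the iteration step: as the construction is written here, a corona cube $K$ satisfies only $\ave{\si}_K\le 2\ave{\si}_S$, which combined with $\ave{w}_K\ave{\si}_K\eqsim 2^a\eqsim\ave{w}_S\ave{\si}_S$ gives the \emph{lower} bound $\ave{w}_K\gtrsim\ave{w}_S$ but no upper bound, so the claim that the tail contribution on a stopping cube $Q_j$ is ``pinned near $k\lambda\ave{w}_S$'' is unjustified; and the cubes $K\subseteq Q_j$ with $K\in\mathscr{K}^a(S)$ still belong to the corona of $S$, not to the coronas of principal descendants in $\mathscr{S}^a_{k+1}$, so the recursion is set up on the wrong objects.

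The difficulty you flag at the end --- that the majorization ``throws away the Haar cancellation'' --- is therefore not a matter of coarser constants: it replaces a true statement by a false one. The cancellation, encoded in the standing normalization $\Norm{\sha_{\mathscr{Q}}f}{L^2}\le\Norm{f}{L^2}$ for \emph{every} subcollection $\mathscr{Q}$ (which for paraproduct-type, non-cancellative blocks is a genuine Carleson-measure hypothesis, not a consequence of $\abs{A_Kf}\le\chi_K\ave{\abs{f}}_K$), is exactly what makes \eqref{eq:expEst} true, and any proof must use it. The argument in \cite{HPTV} is indeed a John--Nirenberg-type level-set iteration, but it is run on the signed function $\sha_{\mathscr{K}^a(S)}(w\chi_Q)$: the base case comes from Chebyshev applied to the uniform unweighted $L^2$ bound of the sub-shifts together with the corona normalizations of $\ave{w}$ and $\ave{\si}$, and the recursion restricts the shift to sub-coronas sitting below the maximal cubes of each level set, where the uniform-in-$\mathscr{Q}$ hypothesis is invoked again. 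If you want to reconstruct the proof, that $L^2$ input --- not a positive majorant --- has to be the starting point.
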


This is a powerful estimate which readily leads to norm bounds for \eqref{eq:shaSplit}. The following computation, simplifying the corresponding ones from \cite{Hytonen:A2,HPTV,LPR}, is borrowed from \cite{HLMORSU}: Denoting
\begin{equation*}
  E_j(S):=\{j\leq \abs{\sha_{\mathscr{K}^a(S)}(w\chi_Q)}/\ave{w}_S<j+1\}\subseteq S,
\end{equation*}
we have
\begin{equation*}
\begin{split}
  \Big\|\sum_{S\in\mathscr{S}^a} &\sha_{\mathscr{K}^a(S)}(w\chi_Q)\Big\|_{L^2(\sigma)}
    \leq\sum_{j=0}^{\infty}(j+1)\BNorm{\sum_{S\in\mathscr{S}^a}\ave{w}_S\cdot\chi_{E_j(S)}}{L^2(\sigma)} \\
  &=\sum_{j=0}^{\infty}(j+1)\Big(\int\Big[\sum_{S\in\mathscr{S}^a}\ave{w}_S\cdot\chi_{E_j(S)}(x)\Big]^2\sigma(x)\ud x\Big)^{1/2} \\
  &\overset{(*)}{\leq} C\sum_{j=0}^{\infty}(j+1)\Big(\int\sum_{S\in\mathscr{S}^a}\ave{w}_S^2\cdot\chi_{E_j(S)}(x)\sigma(x)\ud x\Big)^{1/2} \\
  &= C\sum_{j=0}^{\infty}(j+1)\Big(\sum_{S\in\mathscr{S}^a}\ave{w}_S^2\cdot\sigma(E_j(S))\Big)^{1/2} \\
  &\leq C\sum_{j=0}^{\infty}(j+1)\Big(\sum_{S\in\mathscr{S}^a}\ave{w}_S^2\cdot Ce^{-cj}\sigma(S)\Big)^{1/2} \qquad\text{(by \eqref{eq:expEst})}\\
  &\leq C\sum_{j=0}^{\infty}e^{-cj}(j+1)\Big(2^a\sum_{S\in\mathscr{S}^a}w(S)\Big)^{1/2}\qquad\text{(by \eqref{eq:fixA2} for $S\in\mathscr{S}^a\subset\mathscr{K}^a$)} \\
  &\leq C\cdot 2^{a/2}\Big(\sum_{S\in\mathscr{S}^a}w(S)\Big)^{1/2}.
\end{split}
\end{equation*}
In $(*)$ we used the fact that at a fixed $x$, the numbers $\ave{w}_S$ for the principal cubes $S\supset E_j(S)\owns x$ increase at least geometrically, so their $\ell^1$ and $\ell^2$ norms are comparable.

We now come to the crucial point, where we can improve the earlier $A_2$ bounds to $A_\infty$:

\begin{lemma}\label{lem:princAinfty}
For the principal cubes as defined above, we have
\begin{equation*}
  \sum_{S\in\mathscr{S}^a} w(S)\leq 2\cdot[w]_{A_\infty}'\cdot w(Q).  
\end{equation*}
\end{lemma}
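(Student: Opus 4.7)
The plan is to exploit the Carleson-type packing property of the principal cubes, combined with the very definition of $[w]_{A_\infty}'$ as an averaged maximal function bound. The principal cubes are stopping cubes based on $\si$-density doubling, but we want to control their total $w$-mass; the bridge between $\si$-stopping and $w$-bound will be the maximal function applied to $w\chi_Q$.

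First I would define, for each principal cube $S\in\mathscr{S}^a$, its \emph{principal children} $\children(S)\subseteq\mathscr{S}^a$ as those $S'\in\mathscr{S}^a$ immediately below $S$ in the principal tree, and set
\begin{equation*}
  E(S):=S\setminus\bigcup_{S'\in\children(S)}S'.
\end{equation*}
By construction of $\mathscr{S}^a$, each $S'\in\children(S)$ satisfies $\si(S')/|S'|>2\si(S)/|S|$, and these children are pairwise disjoint subsets of $S$. Hence
\begin{equation*}
  2\frac{\si(S)}{|S|}\sum_{S'\in\children(S)}|S'|<\sum_{S'\in\children(S)}\si(S')\leq\si(S),
\end{equation*}
which gives $\sum_{S'\in\children(S)}|S'|<|S|/2$, and therefore $|E(S)|>|S|/2$. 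Moreover, since a point of $Q$ belongs to $E(S)$ for at most one $S\in\mathscr{S}^a$ (namely the smallest principal cube containing it), the family $\{E(S)\}_{S\in\mathscr{S}^a}$ is pairwise disjoint.

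Next I would observe that for any $x\in E(S)\subseteq S\subseteq Q$, taking the cube $R=S$ in the supremum defining $M(w\chi_Q)(x)$ yields
\begin{equation*}
  M(w\chi_Q)(x)\geq \avgint_S w\chi_Q=\frac{w(S)}{|S|}.
\end{equation*}
Combining this pointwise bound with the disjointness of the $E(S)$ and the estimate $|E(S)|\geq|S|/2$, and finally applying the definition of $[w]_{A_\infty}'$,
\begin{equation*}
  \sum_{S\in\mathscr{S}^a}w(S)
  \leq 2\sum_{S\in\mathscr{S}^a}\frac{w(S)}{|S|}|E(S)|
  \leq 2\sum_{S\in\mathscr{S}^a}\int_{E(S)} M(w\chi_Q)
  \leq 2\int_Q M(w\chi_Q)
  \leq 2[w]_{A_\infty}'\,w(Q).
\end{equation*}

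There is no real obstacle here; the only subtlety is to check that the principal children of a given $S$ are indeed pairwise disjoint and lie inside $S$, which follows from their maximality within $\mathscr{K}^a\cap\{R\subseteq S\}$ subject to the density-doubling condition. The content of the lemma is that the stopping rule based on the $\si$-density is nonetheless compatible with the $w$-Carleson packing, and the transfer mechanism is precisely the Wilson $A_\infty$ constant, for which the inequality $\int_Q M(w\chi_Q)\leq[w]_{A_\infty}'\,w(Q)$ is a tautology.
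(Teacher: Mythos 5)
Your proof is correct and follows essentially the same route as the paper's: exceptional sets $E(S)$ carved out by the principal children, the bound $\abs{E(S)}\geq\tfrac12\abs{S}$ from the stopping condition, the pointwise bound $\ave{w}_S\leq M(w\chi_Q)$ on $E(S)$, and the definition of $[w]_{A_\infty}'$. Your derivation of $\sum_{S'}\abs{S'}<\abs{S}/2$ correctly invokes the $\sigma$-density doubling $\si(S')/\abs{S'}>2\si(S)/\abs{S}$, which is the condition actually defining the principal cubes.
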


\begin{proof}
Let
\begin{equation}\label{eq:ESdef}
  E(S):=S\setminus\bigcup_{S'\subsetneq S}S'.
\end{equation}
The union is the union of its maximal members $S'$, which satisfy
\begin{equation*}
 \abs{S'}=\abs{S'}/w(S')\cdot w(S')\leq\tfrac12\abs{S}/w(S)\cdot w(S'), 
\end{equation*}
hence $\sum\abs{S'}\leq\tfrac12\abs{S}$, and thus
\begin{equation}\label{eq:ESest}
   \abs{E(S)}\geq\tfrac12\abs{S}. 
\end{equation}
Therefore 
\begin{equation*}
\begin{split}
   \sum_{S\in\mathscr{S}^a} w(S)
   &=\sum_{S\in\mathscr{S}^a} \frac{w(S)}{\abs{S}}\abs{S} 
     \leq \sum_{S\in\mathscr{S}^a} \frac{w(S)}{\abs{S}} 2\abs{E(S)} \\
   &\leq 2\sum_{S\in\mathscr{S}^a}\int_{E(S)} M(w\chi_Q) 
     = 2\int_{Q} M(w\chi_Q) \leq 2\cdot [w]_{A_\infty}'\cdot w(Q),
\end{split}
\end{equation*}
where the last step was the definition of $[w]_{A_\infty}'$.
\end{proof}

Substituting the obtained estimates back to \eqref{eq:shaSplit}, we conclude that
\begin{equation*}
\begin{split}
  \Norm{\sha_{\mathscr{K}}(w\chi_Q)}{L^2(\si)}
  &\leq\sum_{a\leq\log_2[w]_{A_2}}\BNorm{\sum_{S\in\mathscr{S}^a}\sha_{\mathscr{K}^a(S)}(w\chi_Q)}{L^2(\sigma)} \\
  &\leq C\sum_{a\leq\log_2[w]_{A_2}}2^{a/2}\Big(\sum_{S\in\mathscr{S}^a}w(S)\Big)^{1/2}\\
  &\leq C\sum_{a\leq\log_2[w]_{A_2}}2^{a/2}\big([w]_{A_\infty}'\cdot w(Q)\big)^{1/2}\\
  &\leq C [w]_{A_2}^{1/2}([w]_{A_{\infty}}')^{1/2}w(Q)^{1/2}.
\end{split}
\end{equation*}
Recalling the initial splitting of $\{K\in\mathscr{D}:K\subseteq Q\}$ into $r+1$ subcollections of the same form as $\mathscr{K}$, this concludes the proof of \eqref{eq:sha2prove}, and hence the proof of Theorem~\ref{thm:shaAinfty}.

\section{Two-weight theory for the maximal function}\label{two weight section}

\subsection{Background}

\

The two-weight problem was  studied in the 1970's by Muckenhoupt and Wheeden and fully solved by E. Sawyer in 1982 in \cite{sawyer82b}. The general question is to find a necessary and sufficient condition for a  pair of unrelated weights $w$ and $\si$ for which  the following estimate holds
\begin{equation}\label{two-weight}
  \Norm{M (f\si)}{L^p(w)}
  \leq B\,\Norm{f}{L^p(\si)}.
\end{equation}
for a finite constant $B$. Then the main result of E.~Sawyer shows that this is the case if and only if there exists a finite $c$ such that  
$$
\int_{Q}
M(\si \chi_{_{Q}})(y)^{p}\, w(y)dy
\le c\, \si(Q)
$$
for all cubes  $Q$. Furthemore, it is shown in \cite{Moen:Two-weight} that if $B$ denotes the best constant then 
$$
B \approx  \sup_{Q} \left(  \frac{\int_{Q}M(\si \chi_{_{Q}})^{p}\, w \ud x}
{\si(Q)}\right)^{1/p}
$$

Since this condition is hard to verify in practice, the first author considered in \cite{perez95}
conditions closer in spirit to the classical two weight $A_p$ condition: 
\begin{equation*}
  A_p[w,\si]:=\sup_Q\Big(\avgint_Q w\Big)\Big(\avgint_Q \si\Big)^{p-1},
\end{equation*}
which reduces to $[w]_{A_p}$ if $\si=w^{-1/(p-1)}$. A consequence of the main result from \cite{perez95} establishes that if $\delta>0$ and the quantity
\begin{equation}\label{LlogLsuffCondition}
\sup_{Q} \, \Big( \avgint_Q w \Big)  \|\si\|^{p-1}_{ L(\log L)^{p-1+\delta},Q} 
\end{equation}
is finite then the two weight norm inequality \eqref{two-weight} holds. 
A recent result of the second author and M. Masty{\l}o \cite{MP} allows to go beyond condition \eqref{LlogLsuffCondition}
 and improve the main results from \cite{perez95}.

  In this paper we consider a different new quantity, namely
\begin{equation*}
  B_p[w,\si]:=\sup_Q\Big(\avgint_Q w\Big)\Big(\avgint_Q \si\Big)^{p}\exp\Big(\avgint_Q\log\si^{-1}\Big).
\end{equation*}
To understand this new quantity we observe that it is  simply the functional on $Q$ defining the $A_p[w,\si]$ condition multiplied by $\avgint_Q \si \exp\big(\avgint_Q\log\si^{-1}\big)\geq 1.$ Then it is immediate that
\begin{equation*}
  A_p[w,\si]\leq B_p[w,\si]\leq A_p[w,\si]A_\infty[\si],
\end{equation*}
the difference of the last two being that $A_p[w,\si]A_\infty[\si]$ involves two independent suprema, as opposed to just one in $B_p[u,v]$.

We will consider first the dyadic maximal operator $M_d$, for which we can prove a dimension-free bound. Let us also introduce the weighted dyadic  maximal function
\begin{equation*}
  M_{d,\si} f:=\sup_{Q\in\mathscr{D}}  
  \frac{\chi_Q}{ \sigma(Q)  }
\int_{Q}
|f(y)|\, \sigma(y)dy,
\end{equation*}
which controls $M_d(f\sigma)$ as follows:

\begin{theorem}\label{dyadic:sharpBuckley}
Let $p\in(1,\infty)$, then 
\begin{equation*}
\begin{split}
  \Norm{M_d (f\si)}{L^p(w)}
  &\leq 4e\cdot \big(B_p[w,\si]\big)^{1/p}\Norm{M_{d,\sigma}f}{L^p(\si)} \\
  &\leq 4e\cdot p'\cdot \big(B_p[w,\si]\big)^{1/p}\Norm{f}{L^p(\si)}
\end{split}
\end{equation*}
and also
\begin{equation*}
\begin{split}
  \Norm{M_d (f\si)}{L^p(w)}
  &\leq 4e\cdot \big([w]_{A_p}[\si]_{A_\infty}'\big)^{1/p}\Norm{M_{d,\sigma}f}{L^p(\si)} \\
  &\leq 4e\cdot p'\cdot \big([w]_{A_p}[\si]_{A_\infty}'\big)^{1/p}\Norm{f}{L^p(\si)}.
\end{split}
\end{equation*} 
\end{theorem}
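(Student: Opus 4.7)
The plan is to run a standard Calderón--Zygmund level-set decomposition for $M_d(f\sigma)$, bring in the $B_p$ functional cube-by-cube, and close the estimate using the dimension-free bounds for the logarithmic maximal operator from Lemma~\ref{lem:logMaxFn}. For each $k\in\Z$ let $\Omega_k := \{x : M_d(f\sigma)(x) > 2^k\}$ and decompose $\Omega_k = \bigsqcup_j Q_k^j$ into its maximal dyadic cubes; these satisfy $\avgint_{Q_k^j}(f\sigma) > 2^k$, while the dyadic parent has average at most $2^k$. The sets $E_k^j := Q_k^j\setminus\Omega_{k+1}$ are pairwise disjoint across $(k,j)$, and the standard sparseness argument (the $(f\sigma)$-averages can at most double between successive principal cubes) yields $|E_k^j|\geq\tfrac12|Q_k^j|$. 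Layer cake plus $\avgint_{Q_k^j}(f\sigma) > 2^k$ reduces the problem to controlling
\begin{equation*}
\sum_{k,j}\bigl(\avgint_{Q_k^j}(f\sigma)\bigr)^p w(E_k^j).
\end{equation*}

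Next I will feed in $B_p$: factor $\avgint_{Q_k^j}(f\sigma) = (\avgint_{Q_k^j}\sigma)\cdot \sigma(Q_k^j)^{-1}\!\int_{Q_k^j}f\sigma$, observe that the second factor is pointwise on $Q_k^j$ bounded by $M_{d,\sigma}f(x)$, and apply the definition of $B_p[w,\sigma]$ together with $w(E_k^j)\le w(Q_k^j)$ to obtain
\begin{equation*}
(\avgint_{Q_k^j}\sigma)^p w(E_k^j)\le B_p[w,\sigma]\,\exp\bigl(\avgint_{Q_k^j}\log\sigma\bigr)\,|Q_k^j|.
\end{equation*}
The log-average factor $\exp(\avgint_{Q_k^j}\log\sigma)$ is pointwise bounded on $Q_k^j$ by $M_0(\sigma\chi_{Q_k^j})(x)$. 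Integrating these two pointwise inequalities over $E_k^j$ and using sparseness $|Q_k^j|\le 2|E_k^j|$ produces
\begin{equation*}
\bigl(\avgint_{Q_k^j}(f\sigma)\bigr)^p w(E_k^j)\le 2B_p\int_{E_k^j}(M_{d,\sigma}f)^p\,M_0(\sigma\chi_{Q_k^j})\,dx.
\end{equation*}

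Finally, summing over the disjoint family $\{E_k^j\}_{k,j}$ and invoking the dimension-free dyadic $L^1$ bound $\int M_0(g)\le e\int g$ from Lemma~\ref{lem:logMaxFn} to trade the $M_0$-factor for an $e$ against $\sigma$ yields the first bound with constant $4e\cdot(B_p[w,\sigma])^{1/p}$; the rightmost inequality is then the standard dimension-free dyadic bound $\|M_{d,\sigma}\|_{L^p(\sigma)\to L^p(\sigma)}\le p'$. The second form of the estimate, with $[w]_{A_p}[\sigma]_{A_\infty}'$ in place of $B_p[w,\sigma]$, follows by the same argument after using the sharp reverse Hölder inequality (Theorem~\ref{thm:SharpRHI}) for $\sigma\in A_\infty$ to dominate $\exp(\avgint\log\sigma^{-1})^{-1}$ by $[\sigma]_{A_\infty}'\avgint\sigma$. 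The hardest step is precisely the final absorption: converting the localized $M_0(\sigma\chi_{Q_k^j})$-factors paired against $(M_{d,\sigma}f)^p$ on the disjoint sparse sets $E_k^j$ into a genuine $\sigma$-factor while losing only the constant $e$, which is where the dimension-free dyadic $L^1$ bound of Lemma~\ref{lem:logMaxFn} is essential and responsible for the factor $4e$ in the stated constant.
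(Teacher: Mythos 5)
Your overall architecture (level sets of $M_d(f\sigma)$, sparse exceptional sets $E_k^j$, the $B_p$ functional cube-by-cube, and the $L^1$ bound for $M_0$) matches the paper's \emph{second}, ``simpler'' proof in Section~\ref{sec:2ndProofBuckley}, but two steps as you state them do not go through. First, the sparseness claim fails: stopping at levels $2^k$, the maximal-cube construction only gives $\avgint_{Q_k^j}(f\sigma)\le 2^d\cdot 2^k$ from the dyadic parent, so the standard computation yields $|Q_k^j\cap\Omega_{k+1}|\le 2^{d-1}|Q_k^j|$, which is useless for every $d\ge 1$; one must stop at levels $a^k$ with $a>2^d$ (cf.\ \eqref{claim}), and the resulting constant $a^p$ is then dimension-dependent --- exactly the disadvantage the paper concedes for this route. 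The dimension-free constant $4e$ in the statement is obtained by a different mechanism: the dyadic Carleson embedding theorem reduces the problem to the Sawyer-type testing condition $\Norm{\chi_R M_d(\chi_R\sigma)}{L^p(w)}^p\le A\,\sigma(R)$, and \emph{that} is verified with principal cubes selected by doubling of $\ave{\si}_Q$ (not by level sets of $M_d(f\sigma)$), for which $|E(S)|\ge\tfrac12|S|$ holds with a genuinely dimension-free argument \eqref{eq:ESest}. Second, your ``hardest step'' is asserted rather than proved: the bound $\int M_0(g)\le e\int g$ is an $L^1$ inequality, not a pointwise domination $M_0(\sigma\chi_Q)\le e\sigma$, so you cannot trade the factor $M_0(\sigma\chi_{Q_k^j})$ for $e\sigma$ inside an integral weighted by $(M_{d,\sigma}f)^p$. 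The paper avoids this by decoupling first: the Carleson embedding freezes the $f$-dependence, leaving only $\sum_{Q\subseteq R}a_Q$ with $a_Q=|Q|\exp(\avgint_Q\log\sigma)$ to be controlled, and \emph{that} sum is where $\int_R M_0(\sigma\chi_R)\le e\,\sigma(R)$ legitimately enters.

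Your derivation of the second form of the estimate is also incorrect. The inequality $B_p[w,\si]\le[w]_{A_p}[\si]_{A_\infty}'$ does not hold in general: what is true is $B_p[w,\si]\le A_p[w,\si]\,[\si]_{A_\infty}$ with the Hru\v{s}\v{c}ev constant, and the paper shows $[\si]_{A_\infty}$ can be exponentially larger than $[\si]_{A_\infty}'$, so the second form is not a corollary of the first via reverse H\"older. Instead, the paper reruns the key chain of inequalities replacing the step $\exp(\avgint_S\log\si)\le M_0(\si\chi_R)$ by $\si(S)/|S|\le M(\si\chi_R)$ on $E(S)$, and then invokes the definition of $[\si]_{A_\infty}'$ directly to bound $\int_R M(\si\chi_R)\le[\si]_{A_\infty}'\si(R)$; Theorem~\ref{thm:SharpRHI} plays no role in this part.
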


The main estimate in both chains of inequalities is of course the first one, since the second is simply the universal estimate for the weighted dyadic maximal function on the weighted $L^p$ space with the same weight:
\begin{equation*}
  \|M_{d,\sigma}\|_{\mathscr{B}(L^p(\sigma))}\leq p'.
\end{equation*}
Obviously, in this dyadic version, it suffices to have the supremum in the weight constants over dyadic cubes only, and to only use the dyadic square function in the definition of $[\si]_{A_\infty}'$. And specializing to the case $\si=w^{-1/(p-1)}$, by the standard dual weight trick, we also get the bounds
\begin{equation*}
  \Norm{M_d f}{L^p(w)}
  \leq \begin{cases} 4e\cdot p'\cdot \big(B_p[w,w^{-1/(p-1)}]\big)^{1/p}\Norm{f}{L^p(w)},  & \\
                                   4e\cdot p'\cdot \big([w]_{A_p}[w^{-1/(p-1)}]_{A_\infty}'\big)^{1/p}\Norm{f}{L^p(w)}. & \end{cases}
\end{equation*}
Let us also recall how such dyadic bounds yield corresponding results for the Hardy--Littlewood maximal operator by a standard argument.

\begin{proof}[Proof of Theorem \ref{sharpBuckley}]
Consider the $2^d$ shifted dyadic systems
\begin{equation*}
  \mathscr{D}^{\alpha}:=\{2^{-k}\big([0,1)^d+m+(-1)^k\alpha\big):k\in\Z,m\in\Z^d\},\qquad\alpha\in\{0,\tfrac13\}^d.
\end{equation*}
One can check (perhaps best in dimension $n=1$ first) that any cube $Q$ is contained in a shifted dyadic cube $Q^{\alpha}\in\mathscr{D}^{\alpha}$ with $\ell(Q^{\alpha})\leq 6\ell(Q)$, for some $\alpha$. Hence
\begin{equation*}
  \avgint_Q\abs{f}\leq 6^d\avgint_{Q^\alpha}\abs{f}\leq 6^d M_d^{\alpha} f,
\end{equation*}
and therefore
\begin{equation*}
  Mf\leq 6^d\sum_{\alpha\in\{0,\frac13\}^d}M_d^{\alpha}f.
\end{equation*}
Thus the norm bound for $M_d$ may be multiplied by $12^d$ to give a bound for $M$.
\end{proof}

\begin{remark}
A recent result of the first author and A.~Kairema \cite{HytKai} allows to perform a similar trick with adjacent dyadic systems even in an abstract space of homogeneous type. Thus, Corollary~\ref{sharpBuckley} readily extends to this generality as well.
\end{remark}

\subsection{Proof of Theorem~\ref{dyadic:sharpBuckley}}

\

We start by observing that it suffices to have a uniform bound over all linearizations
\begin{equation*}
  \tilde{M}(f\si)=\sum_{Q\in\mathscr{D}}\chi_{E(Q)}\ave{f\si}_Q,
\end{equation*}
where the sets $E(Q)\subseteq Q$ are pairwise disjoint.  Here we using the following notation
$$
\ave{f}_Q=\avgint_Q f=  \avgint_Q f(x)\, \ud x
$$
and 
$$
\ave{f}_Q^{\si} =\frac{1}{\si(Q)}\int_Q f(x)\,\si(x) \ud x
$$
where as usual $\si(E)=\int_Q \si(x)\, \ud x$

By this disjointness,
\begin{equation*}
\begin{split}
  \Norm{\tilde{M}(f\si)}{L^p(w)}
  &=\Big(\sum_{Q\in\mathscr{D}} w(E(Q))\ave{f\si}_Q^p\Big)^{1/p} \\
  &=\Big(\sum_{Q\in\mathscr{D}} w(E(Q))\Big(\frac{\si(Q)}{\abs{Q}}\Big)^p(\ave{f}_Q^{\si})^p\Big)^{1/p}
\end{split}
\end{equation*}
Now recall:

\begin{theorem}[Dyadic Carleson embedding theorem]
Suppose that the nonnegative numbers $a_Q$ satisfy
\begin{equation*}
  \sum_{Q\subseteq R}a_Q
  \leq A\si(R)\qquad\forall R\in\mathscr{D}.
\end{equation*}
Then, for all $p\in[1,\infty)$ and $f\in L^p(\si)$,
\begin{equation*}
\begin{split}
  \Big(\sum_{Q\in\mathscr{D}}a_Q(\ave{f}_Q^{\si})^p\Big)^{1/p}
  &\leq A^{1/p}\Norm{M_{d,\sigma}f}{L^p(\si)} \\
  &\leq A^{1/p}\cdot p'\cdot\Norm{f}{L^p(\si)}\qquad\text{if}\quad p>1.
\end{split}
\end{equation*}
\end{theorem}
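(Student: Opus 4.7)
The plan is to reduce both inequalities to elementary distribution-function manipulations: for the first, a layer-cake decomposition combined with the Carleson packing hypothesis, and for the second, the classical Doob-type $L^p(\si)\to L^p(\si)$ bound for the weighted dyadic maximal function $M_{d,\si}$ with the sharp constant $p'$.

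For the first inequality, I would begin with the layer-cake identity applied termwise:
\[
\sum_{Q \in \mathscr{D}} a_Q \bigl(\ave{f}_Q^{\si}\bigr)^p
= p \int_0^\infty \lambda^{p-1} \Bigl(\sum_{Q:\, \ave{f}_Q^{\si} > \lambda} a_Q\Bigr) \ud\lambda.
\]
For each fixed $\lambda > 0$, let $\mathcal{F}_\lambda$ be the family of maximal dyadic cubes $Q \in \mathscr{D}$ with $\ave{f}_Q^{\si} > \lambda$. These cubes are pairwise disjoint, every $Q$ contributing to the inner sum lies in exactly one element of $\mathcal{F}_\lambda$, and $\bigcup_{R \in \mathcal{F}_\lambda} R = \{M_{d,\si}f > \lambda\}$. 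Applying the Carleson hypothesis separately over each stopping cube $R \in \mathcal{F}_\lambda$ then gives
\[
\sum_{Q:\, \ave{f}_Q^{\si} > \lambda} a_Q
= \sum_{R \in \mathcal{F}_\lambda} \sum_{Q \subseteq R} a_Q
\leq A \sum_{R \in \mathcal{F}_\lambda} \si(R)
= A \cdot \si\bigl(\{M_{d,\si}f > \lambda\}\bigr).
\]
Plugging this back into the layer-cake formula and recognizing the resulting integral as $A\,\Norm{M_{d,\si}f}{L^p(\si)}^p$ delivers the first desired inequality after taking $p$-th roots.

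For the second inequality (with $p>1$), I would invoke the standard bound $\Norm{M_{d,\si}}{\mathscr{B}(L^p(\si))} \leq p'$. Since $M_{d,\si}$ is the supremum of the conditional expectations with respect to the dyadic filtration on the measure space $(\R^d, \si\,\ud x)$, Doob's martingale maximal inequality immediately yields this estimate with the sharp constant $p' = p/(p-1)$; alternatively one may redo the stopping-cube argument to obtain the weak-type $(1,1)$ bound with constant $1$ and then interpolate against the trivial $L^\infty$ estimate via Marcinkiewicz.

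No genuine analytic obstacle is anticipated; the argument is essentially bookkeeping. The one minor subtlety is ensuring that the maximal cubes $\mathcal{F}_\lambda$ exist and cover $\{M_{d,\si}f > \lambda\}$: for $f \in L^p(\si)$, H\"older's inequality forces $\ave{|f|}_Q^{\si} \to 0$ as $Q \nearrow \R^d$, so every dyadic cube with $\ave{f}_Q^{\si} > \lambda$ is contained in a maximal such cube, and if $f \notin L^p(\si)$ the right-hand side is infinite and the claim is trivial.
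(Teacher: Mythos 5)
Your proposal is correct and follows essentially the same route as the paper's proof: the same layer-cake decomposition of the sum, the same reduction via maximal stopping cubes whose union is $\{M_{d,\si}f>\lambda\}$, and the same appeal to the universal $L^p(\si)$ bound $p'$ for $M_{d,\si}$ to get the second inequality. The extra remark on the existence of the maximal cubes is a harmless refinement of a point the paper leaves implicit.
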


Since this is a slightly nonstandard formulation, although immediate by inspection of the usual argument, we provide a proof for completeness:

\begin{proof}
We view the sum $\sum_{Q}a_Q(\ave{f}_Q)^p$ as an integral on a measure space $(\mathscr{D},\mu)$ built over the set of dyadic cubes $\mathscr{D}$, assigning to each $Q\in\mathscr{D}$ the measure $a_Q$. Thus
\begin{equation*}
\begin{split}
  \sum_{Q\in\mathscr{D}}a_Q(\ave{f}_Q)^p
  &=\int_0^{\infty}p\lambda^{p-1}\mu(\{Q\in\mathscr{D}:\ave{f}_Q>\lambda\})\ud\lambda \\
  &=:\int_0^{\infty}p\lambda^{p-1}\mu(\mathscr{Q}_{\lambda})\ud\lambda
\end{split}
\end{equation*}
Let $\mathscr{Q}_{\lambda}^*$ be the set of maximal dyadic cubes $R$ with the property that $\ave{f}_R>\lambda$. The cubes $R\in\mathscr{Q}_{\lambda}^*$ are disjoint, and their union is equal to the set $\{M_{d,\si}f>\lambda\}$. Thus
\begin{equation*}
  \mu(\mathscr{Q}_{\lambda})
  =\sum_{Q\in\mathscr{Q}_{\lambda}}a_Q
  \leq\sum_{R\in\mathscr{Q}_{\lambda}^*}\sum_{Q\subseteq R}a_Q
  \leq\sum_{R\in\mathscr{Q}_{\lambda}^*}A\si(R)
  =A\si(M_{d,\si}f>\lambda),
\end{equation*}
and hence
\begin{equation*}
  \sum_{Q\in\mathscr{D}}a_Q(\ave{f}_Q)^p
  \leq A\int_0^{\infty}p\lambda^{p-1}\si(M_{d,\si}f>\lambda)\ud\lambda
  =A\|M_{d,\sigma}f\|_{L^p(\sigma)}^p.\qedhere
\end{equation*}
\end{proof}

If we apply the Carleson embedding with $a_Q=w(E(Q))\big(\si(Q)/\abs{Q}\big)^p$, we find that
\begin{equation}\label{eq:linM2weight}
  \Norm{\tilde{M}(f\si)}{L^p(w)}\leq A^{1/p}\Norm{M_{d,\sigma}f}{L^p(\si)}
\end{equation}
provided that
\begin{equation}\label{eq:testLinearM}
  \sum_{Q\subseteq R}w(E(Q))\Big(\frac{\si(Q)}{\abs{Q}}\Big)^p\leq A\,\si(R)\qquad\forall R\in\mathscr{D}.
\end{equation}
Note that on $E(Q)\subseteq Q\subseteq R$, we have $\si(Q)/\abs{Q}\leq M(\si \chi_R)$, and hence
\begin{equation*}
\begin{split}
    \sum_{Q\subseteq R}w(E(Q))\Big(\frac{\si(Q)}{\abs{Q}}\Big)^p
  &=\int\sum_{Q\subseteq R}\chi_{E(Q)}\Big(\frac{\si(Q)}{\abs{Q}}\Big)^p w \\
  &\leq\int\sum_{Q\subseteq R}\chi_{E(Q)} M(\chi_R\si)^p w
     \leq \int_R M(\chi_R\si)^p w.
\end{split}
\end{equation*}
So if $\Norm{\chi_R M(\chi_R \si)}{L^p(u)}\leq A^{1/p} \,\si(R)^{1/p}$, then \eqref{eq:testLinearM} holds, hence by Carleson's embedding also \eqref{eq:linM2weight}, and therefore the original two-weight inequality
\begin{equation*}
\Norm{M(f\si)}{L^p(u)}\leq A^{1/p}\,\Norm{M_{d,\si}f}{L^p(\si)}.
\end{equation*}
Hence, we are reduced to proving that
\begin{equation}\label{eq:M2prove}
\Norm{\chi_R M(\chi_R \si)}{L^p(u)}^p \leq A\,\si(R),\qquad A=(4e)^{1/p}\cdot B_p[w,\si]. 
\end{equation}
(In fact, the argument up to this point was essentially reproving Sawyer's two-weight characterization for the maximal function, paying attention to the constants.)

To prove~\eqref{eq:M2prove}, we exploit another linearization of $M$ involving the \emph{principal cubes}, as in the proof of the $A_2$ theorem: Let $\mathscr{S}_0:=\{R\}$ and recursively
\begin{equation*}
  \mathscr{S}_k:=\bigcup_{S\in\mathscr{S}_{k-1}}\{Q\subset S:\ave{\si}_Q>2\ave{\si}_S,Q\text{ is a maximal such cube}\},
\end{equation*}
and then $\mathscr{S}:=\bigcup_{k=0}^{\infty}\mathscr{S}_k$. The pairwise disjoint subsets $E(S)\subseteq S$, defined in \eqref{eq:ESdef}, satisfy $\abs{E(S)}\geq\tfrac12\abs{S}$ by \eqref{eq:ESest}, and they partition $R$.

 If $x\in E(S)$ and $Q\owns x$, then $\ave{\si}_Q\leq 2\ave{\si}_S$, and hence $\chi_R M(\chi_R\si)\leq 2\ave{\si}_S$ on $\chi_{E(S)}$. So altogether
\begin{equation}\label{eq:BuckleyKeyStep}
\begin{split}
  \Norm{\chi_R M(\chi_R\si)}{L^p(w)}^p
  &\leq 2^p\BNorm{\sum_{S\in\mathscr{S}}\chi_{E(S)}\ave{\si}_S}{L^p(w)}^p \\
  &=2^p \sum_{S\in\mathscr{S}} w(E(S))\Big(\frac{\si(S)}{\abs{S}}\Big)^p \\
  &\leq 2^p\sum_{S\in\mathscr{S}} \frac{w(S)}{\abs{S}}
      \Big(\frac{\si(S)}{\abs{S}}\Big)^p\abs{S} \\
  &\leq 2^{p+1}\sum_{S\in\mathscr{S}} B_p[w,\si]\exp\Big(\avgint_S\log\si\Big)\abs{E(S)} \\ 
  &\leq 2^{p+1}B_p[w,\si]\int_R \sum_{S\in\mathscr{S}} \exp\Big(\avgint_S\log\si\Big)\chi_{E(S)} \\
  &\leq 2^{p+1}B_p[w,\si]\int_R \sup_{Q\in\mathscr{D}} \chi_Q\exp\Big(\avgint_Q\log\si \chi_R\Big)\\
  &= 2^{p+1}B_p[w,\si]\int_R M_0(\chi_R\si),
\end{split}
\end{equation}
where $M_0$ is the (dyadic) logarithmic maximal function introduced in Lemma~\ref{lem:logMaxFn}. By this lemma, we then have
\begin{equation*}
  \Norm{\chi_R M(\chi_R\si)}{L^p(u)}^p
  \leq 4^p B_p[w,\si]\cdot e\cdot\si(R),
\end{equation*}
which proves \eqref{eq:M2prove}, and hence Theorem~\ref{dyadic:sharpBuckley}, upon taking the $p$th root.

\

In order to prove the second version of Theorem~\ref{dyadic:sharpBuckley}, we only need to make a slight modification in the estimate~\eqref{eq:BuckleyKeyStep}. We then compute:
\begin{equation*}
\begin{split}
  \Norm{\chi_R M(\chi_R\si)}{L^p(w)}^p
  &\leq 2^p\sum_{S\in\mathscr{S}} \frac{w(S)}{\abs{S}}
      \Big(\frac{\si(S)}{\abs{S}}\Big)^p\abs{S} \\
  &\leq 2^{p+1}\sum_{S\in\mathscr{S}} [w]_{A_p}\frac{\si(S)}{\abs{S}}\abs{E(S)} \\ 
  &\leq 2^{p+1}[w]_{A_p}\sum_{S\in\mathscr{S}}\int_{E(S)} M(\si\chi_Q) \\
  &= 2^{p+1}[w]_{A_p}\int_Q M(\si\chi_Q) \\
  &= 2^{p+1}[w]_{A_p}[\si]_{A_\infty}'  \si(Q),
\end{split}
\end{equation*}
by a direct application of the definition of $[\si]_{A_\infty}'$ in the last step, and this completes the alternative argument.

\subsection{Another proof of Theorem~\ref{dyadic:sharpBuckley}}\label{sec:2ndProofBuckley}

\

We finish this section by providing yet another proof variant for Theorem~\ref{dyadic:sharpBuckley}. This proof is more elementary, since it does not need the reduction to the testing condition~\eqref{eq:M2prove}, and it uses the more standard Calder\'on--Zygmund-type stopping cubes, instead of the principal cubes. Its disadvantage is the fact the we cannot recover the dimension-independence by this argument. On the other hand, the proof may be extended to maximal functions defined in term of a general basis (see \cite{GCRdF} Section IV.4).

\begin{proof}[A simpler proof of Theorem~\ref{dyadic:sharpBuckley} with a dimension-dependent bound]
Fix $a> 2^d$. For each integer $k$ let
$$\Omega_{k} = \{x\in
{\mathbb R}^d : M_d( f\,\si )(x)>a^k \}.
$$
By standard arguments we consider the 
Calder\'{o}n--Zygmund decomposition  and  there is a family of maximal non-overlapping
dyadic cubes $\{Q_{k,j}\}$ for which $\Omega_{k} = \bigcup_{j}Q_{k,j}$
and
\begin{equation}
a^{k} <
\frac{1}{|Q_{k,j}|}\int_{Q_{k,j}}|f(y)|\,\si(y)dy
 \le 2^da^{k}.
\label{ulti}
\end{equation}
Now, 
\begin{equation*}
\begin{split}
  \int_{{\mathbb R}^d} &M_d( f\si)^{p}\,w\ud x
   = \sum_{k} \int_{ \Omega_{k}\setminus\Omega_{k+1}}M_d( f\si\,)^{p}\,w\ud x \\
  &\le a^{p} \sum_{k} a^{kp}w(\Omega_{k})=a^p\sum_{k,j}a^{kp} w( Q_{k,j})\\
  &\le a^p\sum_{k,j} \left(\frac{1}{|Q_{k,j}|}\int_{Q_{k,j}}|f(y)|\,\si(y)\ud y\right)^{p}w(Q_{k,j})\\
  &=a^p\sum_{k,j} \big(\ave{|f|}_{Q_{k,j}}^{\si}\big)^p\left(\frac{\si(Q_{k,j})}{  |Q_{k,j}| } \right)^{p}w(Q_{k,j})\\
  &\leq a^p B_p[w,\si]\, \sum_{k,j} \big(\ave{|f|}_{Q_{k,j}}^{\si}\big)^p\, |Q_{k,j}| \exp\Big(\avgint_{Q_{k,j}} \log \sigma(t)\ud t\Big)\\
  &=a^p B_p[w,\si]\, \sum_{Q\in\mathscr{D}}\big(\ave{|f|}_{Q}^{\si}\big)^p a_Q,
\end{split}
\end{equation*}
where
\begin{equation*}
  a_Q=\begin{cases} |Q| \exp\big(\avgint_{Q} \log \sigma\big) & \text{if $Q=Q_{k,j}$ for some $(k,j)$}, \\ 0 & \text{else}. \end{cases}
\end{equation*}
By the dyadic Carleson embedding theorem, we can hence conclude that
\begin{equation*}
   \int_{{\mathbb R}^d} M^d( f\si)^{p}\,w\ud x\leq a^pB_p[w,\sigma]A\int_{\mathbb{R}^d}(M_{d,\sigma}f)^p\sigma\ud x,
\end{equation*}
provided that we check the condition
\begin{equation}\label{eq:Carleson2check}
  \sum_{Q\subseteq R}a_Q=\sum_{k,j:Q_{k,j}\subseteq R}|Q_{k,j}| \exp\Big(\avgint_{Q_{k,j}} \log \sigma \Big)\leq A|R|.
\end{equation}

To estimate the left side of \eqref{eq:Carleson2check}, we do first the following: For each $(k,j)$ we set $E_{k,j} = Q_{k,j}\setminus \Omega_{k+1}$. 
Observe that the sets of the family $E_{k,j}$ are pairwise disjoint. We claim 
that 
\begin{equation}\label{claim}
|Q_{k,j}|< \frac{a}{a-2^d}\,|E_{k,j}|
\end{equation}
for each $k,j$. Indeed, by (\ref{ulti}) and  H\"older's inequality,
\begin{eqnarray*}
|Q_{k,j}\cap \Omega_{k+1}|&=&\sum_{Q_{k+1,l}\subset
Q_{k,j}}|Q_{k+1,l}|\\
&<& \frac{1}{a^{k+1 } }\sum_{Q_{k+1,l}\subset
Q_{k,j}}  \int_{Q_{k+1,l}}|f|\,\si \\
&\le& \frac{1}{a^{k+1}}
 \int_{Q_{ k,j}}|f|\,\si \le
 \frac{2^d}{a}|Q_{k,j}|,
\end{eqnarray*}
which proves \eqref{claim}. With  $\be=\frac{a}{a-2^d}$ we can estimate the left side of \eqref{eq:Carleson2check} as follows: 

\begin{equation*}
\begin{split}
  \sum_{Q\subseteq R}a_Q
  &\leq \beta \sum_{  (k,j):Q_{k,j}\subseteq R} |E_{k,j}|  \exp\Big(\avgint_{Q_{k,j}}  \log \sigma(t)dt\Big) \\
  &\leq \beta \sum_{  (k,j):Q_{k,j}\subseteq R}\int_{E_{k,j}} M_0(\sigma 1_R)(x)\ud x \\
  &\leq \beta \int_{R} M_0(\sigma 1_R)(x)\ud x \leq\beta\, e\,\sigma(R),
\end{split}
\end{equation*}
where we used the definition and the $L^1$ boundedness of the logarithmic dyadic maximal function. This proves \eqref{eq:Carleson2check} with $A=\beta\,e$, concluding the proof.
\end{proof}

\section{Proof of the extrapolation theorems} \label{extrapolation}

We will prove in this section the Upper and Lower Extrapolation Theorems  \ref{thm:lowerExtrap} and \ref{thm:upperExtrap}. Recall that the initial hypothesis is given by the expression: 
\begin{equation*}
  \Norm{Tf}{L^r(w)}
  \leq\varphi\big([w]_{A_r},[w]_{A_{\infty}},[w^{-1/(r-1)}]_{A_{\infty}}^{(r-1)}\big)\Norm{f}{L^r(w)}
\end{equation*}
for some $r\in (1,\infty)$.

\begin{proof}[Proof of Theorem~\ref{thm:lowerExtrap}]
Our argument is modeled after a simplified proof of the Dragi\v{c}evi\'c--Grafakos--Pereyra--Petermichl \cite{DGPP} result due to Duoandikoetxea  \cite{Duo-JFA} (see also \cite{CMP-book}).

Fix some $p\in(1,r)$, $w\in A_p$, $f\in L^p(w)$ and $g:=\abs{f}/\Norm{f}{L^p(w)}$. Let
\begin{equation*}
   Rg:=\sum_{k=0}^{\infty}\frac{2^{-k}M^k g}{\Norm{M}{\mathscr{B}(L^p(w))}^k}
\end{equation*}
so that
\begin{equation*}
  \abs{g}\leq Rg,\qquad\Norm{Rg}{L^p(w)}\leq 2\Norm{g}{L^p(w)}=2,\qquad
  [Rg]_{A_1}\leq 2\Norm{M}{L^p(w)}.
\end{equation*}
Then by H\"older's inequality
\begin{equation*}
\begin{split}
  \Norm{Tf}{L^p(w)}
  &=\Big(\int\abs{Tf}^p (Rg)^{-(r-p)p/r}(Rg)^{(r-p)p/r}w\Big)^{1/p} \\
  &\leq\Big(\int\abs{Tf}^r (Rg)^{-(r-p)}w\Big)^{1/r}\Big(\int (Rg)^{p}w\Big)^{1/p-1/r} \\
  &\leq\Norm{Tf}{L^r(W)}(2^p)^{1/p-1/r} \leq 2\Norm{Tf}{L^r(W)}, \qquad W:=(Rg)^{-(r-p)}w.
\end{split}
\end{equation*}
By assumption, we have
\begin{equation*}
  \Norm{Tf}{L^r(W)}
  \leq\varphi\big(\big([W]_{A_r},[W]_{A_{\infty}},[W^{-1/(r-1)}]_{A_{\infty}}^{(r-1)}\big)\Norm{f}{L^r(W)}
\end{equation*}
where
\begin{equation*}
\begin{split}
  \Norm{f}{L^r(W)}
  &=\Big(\int\abs{f}^r (Rf)^{-(r-p)}w\Big)^{1/r}\Norm{f}{L^p(w)}^{(r-p)/r} \\
  &\leq \Big(\int\abs{f}^r \abs{f}^{-(r-p)}w\Big)^{1/r}\Norm{f}{L^p(w)}^{(r-p)/r}
  =\Norm{f}{L^p(w)},
\end{split}
\end{equation*}
so it remains to estimate the weight constants
\begin{equation*}
  [W]_{A_r},\qquad [W]_{A_{\infty}},\qquad [W^{-1/(r-1)}]_{A_{\infty}}.
\end{equation*}

Using $\sup_Q(Rg)^{-1}\leq[Rg]_{A_1}\ave{Rg}_Q^{-1}$ or H\"older's or Jensen's inequality where appropriate, we compute
\begin{equation*}
\begin{split}
  \ave{W}_Q &=\ave{(Rg)^{-(r-p)}w}_Q \\
    &\leq[Rg]_{A_1}^{r-p}\ave{Rg}_Q^{-(r-p)}\ave{w}_Q,\\
  \ave{W^{-1/(r-1)}}_Q^{r-1}
    &=\ave{(Rg)^{(r-p)/(r-1)}w^{-1/(r-1)}}_Q^{r-1} \\
    &\leq\ave{Rg}_Q^{r-p}\ave{w^{-1/(p-1)}}_Q^{p-1} ,\\
  \exp\ave{-\log W}_Q &=  \big(\exp\ave{\log(Rg)}_Q\big)^{r-p}\exp\ave{-\log w}_Q \\
      &\leq\ave{Rg}_Q^{r-p}\exp\ave{-\log w}_Q,
\end{split}
\end{equation*}
and
\begin{equation*}
\begin{split}
  \big(\exp &\ave{-\log W^{-1/(r-1)}}_Q\big)^{r-1} \\
    &=\big(\exp\ave{\log(Rg)^{-1}}_Q\big)^{r-p}\big(\exp\ave{-\log w^{-1/(r-1)}}_Q\big)^{r-1} \\
    &\leq[Rg]_{A_1}^{r-p}\ave{Rg}^{-(r-p)}\big(\exp\ave{-\log w^{-1/(p-1)}}_Q\big)^{p-1}.
\end{split}
\end{equation*}
Multiplying the appropriate estimates and using the definition, we then have
\begin{equation*}
\begin{split}
  &[W]_{A_r}\leq[Rg]_{A_1}^{r-p}[w]_{A_p},\qquad
  [W]_{A_\infty}\leq[Rg]_{A_1}^{r-p}[w]_{A_\infty},\\
  &[W^{-1/(r-1)}]_{A_{\infty}}^{r-1}\leq[Rg]_{A_1}^{r-p}[w^{-1/(p-1)}]_{A_\infty}^{p-1}.
\end{split}
\end{equation*}
(We do not know whether it is possible to make similar estimates for $[W]_{A_\infty}'$ in terms of $[w]_{A_\infty}'$; this is the reason why we need to use the $[\ ]_{A_\infty}$ constants in this proof.)

Next, recall that
\begin{equation*}
  [Rg]_{A_1}\leq 2\,  \Norm{M}{\mathscr{B}(L^p(w))}  
  \leq c_d\cdot p'\cdot [w]_{A_p}^{1/p} ([w^{-1/(p-1)}]_{A_\infty}')^{1/p}.
\end{equation*}
Thus we conclude the proof with
\begin{equation*}
\begin{split}
  \Norm{Tf}{L^p(w)}
  &\leq 2\Norm{Tf}{L^r(W)} 
    \leq 2\varphi\big([W]_{A_r},[W]_{A_{\infty}},[W^{-1/(r-1)}]_{A_{\infty}}^{(r-1)}\big)\Norm{f}{L^r(W)}\\
  &\leq 2\varphi\Big([Rg]_{A_1}^{r-p}\big([w]_{A_p},[w]_{A_{\infty}},[w^{-1/(p-1)}]_{A_{\infty}}^{(p-1)}\big)\Big)\Norm{f}{L^p(w)}\\
  &\leq 2\varphi\Big(2^{r-p}\Norm{M}{\mathscr{B}(L^p(w))}^{r-p}\big([w]_{A_p},[w]_{A_{\infty}},[w^{-1/(p-1)}]_{A_{\infty}}^{(p-1)}\big)\Big)\Norm{f}{L^p(w)}.\qedhere
\end{split}
\end{equation*}
\end{proof}

\begin{proof}[Proof of Theorem~\ref{thm:upperExtrap}]
Again, our argument is inspired by a simplified proof of the Dragi\v{c}evi\'c--Grafakos--Pereyra--Petermichl \cite{DGPP} result due to Duoandikoetxea  \cite{Duo-JFA} (see also \cite{CMP-book}).

Fix some $p\in(r,\infty)$, $w\in A_p$, $f\in L^p(w)$. By duality, we need have
\begin{equation*}
  \Norm{Tf}{L^p(w)}=\sup_{\substack{h\geq 0\\ \Norm{h}{L^{p'}(w)=1}}}\int\abs{Tf}hw.
\end{equation*}
We fix one such $h$, and try to bound the expression on the right.

Observe that the pointwise multiplication operators
\begin{equation*}
  h\mapsto wh: L^{p'}(w)\to L^{p'}(w^{1-p'}),\qquad
  g\mapsto\frac{1}{w}g: L^{p'}(w^{1-p'})\to L^{p'}(w)
\end{equation*}
are isometric. Let $R$ be as in the previous proof, except with $p'$ and $\si=w^{1-p'}$ in place of $p$ and $w$:
\begin{equation*}
  Rg:=\sum_{k=0}^{\infty}\frac{2^{-k}M^k g}{\Norm{M}{\mathscr{B}(L^{p'}(\si))}^k},
\end{equation*}
and $R'h:=w^{-1}R(wh)$. Then
\begin{equation*}
  h\leq R'h,\qquad\Norm{R'h}{L^{p'}(w)}\leq 2\Norm{h}{L^{p'}(w)}=2,\qquad
  [w R'h]_{A_1}\leq 2\Norm{M}{\mathscr{B}(L^{p'}(\si))}.
\end{equation*}
Then by H\"older's inequality
\begin{equation*}
\begin{split}
  \int\abs{Tf}hw &\leq\int\abs{Tf}(R'h)w
  =\int\abs{Tf}(R'h)^{(p-r)/[r(p-1)]}(R'h)^{(r-1)p/[r(p-1)]}w \\
  &\leq\Big(\int\abs{Tf}^r (R'h)^{(p-r)/(p-1)}w\Big)^{1/r}\Big(\int (R'h)^{p/(p-1)}w\Big)^{1/r'} \\
  &\leq\Norm{Tf}{L^r(W)}2^{p'/r'},\qquad W:=(R'h)^{(p-r)/(p-1)}w.
\end{split}
\end{equation*}

By assumption,
\begin{equation}\label{eq:upperExtAss}
  \Norm{Tf}{L^r(W)}
  \leq\varphi\left([W]_{A_r},[W]_{A_\infty},[W^{-1/(r-1)}]_{A_\infty}^{(r-1)}\right)\Norm{f}{L^r(W)} 
\end{equation}
where, by H\"older's inequality with exponents $p/r$ and its $p/(p-r)$,
\begin{equation*}
\begin{split}
  \Norm{f}{L^r(W)}
  &=\Big(\int\abs{f}^r w^{r/p}\cdot (R'h)^{(p-r)/(p-1)}w^{(p-r)/p}\Big)^{1/r} \\
  &\leq\Big(\int\abs{f}^p w\Big)^{1/p}\Big(\int (R'h)^{p/(p-1)}w\Big)^{1/r-1/p}
  \leq\Norm{f}{L^p(w)}(2^{p'})^{1/r-1/p},
\end{split}
\end{equation*}
so altogether, supressing the arguments of $\varphi$ from \eqref{eq:upperExtAss},
\begin{equation*}
\begin{split}
  \int\abs{Tf}hw
  &\leq\Norm{Tf}{L^r(W)}2^{p'/r'}
   \leq\varphi\big(\ldots\big)\Norm{f}{L^r(W)}2^{p'/r'} \\
  &\leq\varphi\big(\ldots\big)(2^{p'})^{1/r-1/p}\Norm{f}{L^p(w)}2^{p'/r'}
    =2\varphi(\ldots)\Norm{f}{L^p(w)}.
\end{split}
\end{equation*}

It remains to estimate
\begin{equation*}
   [W]_{A_r},\qquad [W]_{A_\infty},\qquad [W^{-1/(r-1)}]_{A_\infty}^{(r-1)}
\end{equation*}
for
\begin{equation*}
  W=(R'h)^{(p-r)/(p-1)}w=[(R'h)w]^{(p-r)/(p-1)}w^{(r-1)/(p-1)}.
\end{equation*}

We thus compute
\begin{equation*}
\begin{split}
  \ave{W}_Q &=\ave{(R'h)^{(p-r)/(p-1)}w}_Q \\
    &\leq\ave{(R'h)w}_Q^{(p-r)/(p-1)}\ave{w}_Q^{(r-1)/(p-1)}, \\
   \ave{W^{-1/(r-1)}}_Q^{r-1} 
   &=  \ave{(wR'h)^{-(p-r)/[(p-1)(r-1)]}w^{-1/(p-1)}}_Q^{r-1} \\
   &\leq [wR'h]_{A_1}^{(p-r)/(p-1)}\ave{(R'h)w}_Q^{-(p-r)/(p-1)}\ave{w^{-1/(p-1)}}_Q^{r-1}, \\
   \exp(-\ave{\log W}_Q)
   &=\big(\exp\ave{\log(wR'h)^{-1}}_Q\big)^{(p-r)/(r-1)}\big(\exp\ave{-\log w}_Q\big)^{(r-1)/(p-1)} \\
   &\leq[(R'h)w]_{A_1}^{(p-r)/(r-1)}\ave{(R'h)w}_Q^{-(p-r)/(r-1)} \\
   &\qquad\times\big(\exp\ave{-\log w}_Q\big)^{(r-1)/(p-1)},
\end{split}
\end{equation*}
and
\begin{equation*}
\begin{split}
    \big( \exp(- &\ave{\log W^{-1/(r-1)}}_Q)\big)^{r-1} \\
    &=\big(\exp(\ave{\log (wR'h)}_Q)\big)^{(p-r)/(p-1)}\big(\exp\ave{-\log w^{-1/(p-1)}}_Q\big)^{r-1} \\
    &\leq\ave{(R'h)w}_Q^{(p-r)/(r-1)}\big(\exp\ave{-\log w^{-1/(p-1)}}_Q\big)^{r-1}.
\end{split}
\end{equation*}
Multiplying the relevant quantities, it follows that
\begin{equation*}
\begin{split}
  [W]_{A_r} &\leq[(R'h)w]_{A_1}^{(p-r)/(p-1)}[w]_{A_p}^{(r-1)/(p-1)}, \\
  [W]_{A_\infty} &\leq[(R'h)w]_{A_1}^{(p-r)/(p-1)}[w]_{A_\infty}^{(r-1)/(p-1)}, \\
  [W^{-1/(r-1)}]_{A_\infty}^{r-1} &\leq[(R'h)w]_{A_1}^{(p-r)/(p-1)}[w^{-1/(p-1)}]_{A_\infty}^{(r-1)}, \\
\end{split}
\end{equation*}
Also recall that 
\begin{equation*}
  [(R'h)w]_{A_1}\leq 2\Norm{M}{\mathscr{B}(L^{p'}(w^{1-p'}))}
  \leq c_d[w^{1-p'}]_{A_{p'}}^{1/p'}[w]_{A_\infty}^{1/p'}
  =c_d[w]_{A_{p}}^{1/p}[w]_{A_\infty}^{1/p'},
\end{equation*}
and thus we conclude with
\begin{equation*}
\begin{split}
  &\Norm{Tf}{L^p(w)}
    \leq\int\abs{Tf}hw 
    \leq 2\varphi\left([W]_{A_r},[W]_{A_\infty},[W^{-1/(r-1)}]_{A_\infty}^{(r-1)} \right)\Norm{f}{L^p(w)} \\
  &\leq 2\varphi\left([(R'h)w]_{A_1}^{(p-r)/(p-1)}\big([W]_{A_r},[W]_{A_\infty},[W^{-1/(r-1)}]_{A_\infty}^{(r-1)}\big)\right)\Norm{f}{L^p(w)} \\
  &\leq 2\varphi\left((2\Norm{M}{\mathscr{B}(L^{p'}(w^{1-p'}))})^{(p-r)/(p-1)}\right.\\
    &\qquad\qquad\times\left.\big([w]_{A_r}^{(r-1)/(p-1)},[w]_{A_\infty}^{(r-1)/(p-1)},
         [w^{-1/(p-1)}]_{A_\infty}^{(r-1)}\big)\right)\Norm{f}{L^p(w)}.\qedhere
\end{split}
\end{equation*} 
\end{proof}

\section{The $A_1$ theory, proof of Theorem \ref{thm:A1strong(p,p)} and its consequences}  \label{A1theory}

\subsection{ The main lemma.}

\

The proofs of the theorems will be based on the following lemma.

\begin{lemma} \label{keylemma}
Let $T$ be any Calder\'{o}n-Zygmund singular integral operator and
let $w$ be any weight. Also  let \, $p,r \in (1,\infty)$.\,Then, there is a constant $c=c_{d,T}$ such that:
$$
\|Tf\|_{\strt{1.7ex}L^p(w)}\le cpp'\,(r')
^{1/p'}\|f\|_{\strt{1.7ex}L^p(M_rw)}
$$
where as usual we denote  $M_rw=M(w^r)^{1/r}$.
\end{lemma}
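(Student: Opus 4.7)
My proof strategy combines two ingredients: the $A_1$ linear bound \eqref{A1strong(p,p)} for Calder\'on--Zygmund operators, $\|T\|_{\mathscr{B}(L^p(u))}\le cpp'[u]_{A_1}$ for any $u\in A_1$; and the Coifman--Rochberg theorem, which asserts that $M_rw=[M(w^r)]^{1/r}$ is an $A_1$ weight (since $1/r<1$), with the sharp bound $[M_rw]_{A_1}\le c_d/(1-1/r)=c_d r'$. These two facts already give a weaker form of the lemma: since $w\le M_rw$ pointwise, $\|Tf\|_{L^p(w)}\le\|Tf\|_{L^p(M_rw)}$, and applying \eqref{A1strong(p,p)} to $u=M_rw$ produces the intermediate bound
\begin{equation*}
  \|Tf\|_{L^p(w)}\le cpp'r'\|f\|_{L^p(M_rw)},
\end{equation*}
which has the correct $pp'$ factor but $r'$ in place of the sharper $(r')^{1/p'}$.

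To improve the exponent on $r'$ from $1$ down to $1/p'$, I would add a duality step. Writing $\|Tf\|_{L^p(w)}=\sup\int (Tf)g\,dx$ over nonnegative $g$ with $\|g\|_{L^{p'}(w^{1-p'})}\le 1$ and passing $T$ to its adjoint, one is reduced to establishing a bound of the form $\|T^*g\|_{L^{p'}((M_rw)^{1-p'})}\le cpp'(r')^{1/p'}\|g\|_{L^{p'}(w^{1-p'})}$. Since $T^*$ is itself Calder\'on--Zygmund and $(M_rw)^{1-p'}\in A_{p'}$ with $A_{p'}$ constant quantitatively controlled by a power of $[M_rw]_{A_1}$, one brings in the $A_{p'}$ theory of $T^*$. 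To capture the sharp exponent on $r'$ I would then combine this with a Rubio de Francia iteration performed in the auxiliary space $L^{p'}((M_rw)^{1-p'})$, where the Hardy--Littlewood maximal operator has a quantitatively controlled norm: this produces an $A_1$ majorant of $g$ to which \eqref{A1strong(p,p)} applies, and an appropriate factorization via H\"older's inequality lets the $L^{p'}$ normalization ``absorb'' a factor of $(r')^{1/p}$, leaving $(r')^{1-1/p}=(r')^{1/p'}$ in the final bound.

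The main obstacle is precisely the bookkeeping: arranging the Rubio de Francia step, the H\"older factorization, and the $A_1$/$A_{p'}$ bounds for $T$ and $T^*$ in such a way that the final exponent on $r'$ is exactly $1/p'$ rather than some other value such as $1$ or $p'-1$. A related technical point is that $w$ is an arbitrary weight with no $A_p$ information, so the Rubio de Francia iteration cannot be performed directly in $L^{p'}(w^{1-p'})$; it must be carried out in an auxiliary space like $L^{p'}((M_rw)^{1-p'})$ where $M$ is quantitatively bounded, and the mismatch between $(M_rw)^{1-p'}$ and $w^{1-p'}$ must be absorbed cleanly using the pointwise inequality $(M_rw)^{1-p'}\le w^{1-p'}$.
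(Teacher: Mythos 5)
Your first step is correct as far as it goes: the Coifman--Rochberg theorem gives $[M_rw]_{A_1}\le c_d\,r'$, and combining this with the linear $A_1$ bound \eqref{A1strong(p,p)} and the pointwise inequality $w\le M_rw$ yields $\|Tf\|_{L^p(w)}\le c\,pp'\,r'\,\|f\|_{L^p(M_rw)}$. But this is a strictly weaker statement than the lemma, and the loss is not cosmetic: in the applications (Theorems \ref{thm:A1strong(p,p)}, \ref{thm:A1weak(1,1)}, \ref{thm:dualA1weak(1,1)}) one takes $r'\eqsim[w]_{A_\infty}'$ and then lets $p$ (or $p'$) tend to $1$ like $1+1/\log(e+[w]_{A_\infty}')$, precisely so that $(r')^{1/p'}=(r')^{p-1}$ stays bounded; with exponent $1$ on $r'$ the entire $A_1$ theory of Section~6 degrades by a full factor of $[w]_{A_\infty}'$. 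There is also a circularity issue: the linear bound \eqref{A1strong(p,p)} is itself deduced in \cite{LOP3} (and in this paper's Theorem \ref{thm:A1strong(p,p)}) from the very lemma you are proving, via the reverse H\"older inequality; for $1<p<2$ no independent route to \eqref{A1strong(p,p)} is available, so it cannot be invoked as a black box here.

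The second step, which is where the sharp exponent $1/p'$ is supposed to come from, is not an argument. The duality reduction lands exactly on the equivalent dual formulation \eqref{dualkeylemma} of the same lemma, so no progress has been made; and running the sharp $A_{p'}$ theorem on the weight $(M_rw)^{1-p'}$, whose $A_{p'}$ constant is only controlled by $(c_dr')^{p'-1}$, produces the exponent $\max\{p'-1,1\}\ge 1$ on $r'$, again not $1/p'$. The subsequent ``Rubio de Francia iteration plus H\"older factorization'' is never specified, and your sketch contains no identifiable mechanism by which the exponent drops to $1/p'$; you flag the ``bookkeeping'' as the remaining obstacle, but that bookkeeping \emph{is} the content of the lemma. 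For comparison, the paper does not reprove this estimate from scratch: it quotes from \cite{LOP3} the bound $\|Tf\|_{L^p(w)}\le c\,pp'\big(\tfrac{1}{r-1}\big)^{1-1/(pr)}\|f\|_{L^p(M_rw)}$ for $r\in(1,2]$ (whose proof is a quantitative sharp-maximal-function argument, not a passage through $[M_rw]_{A_1}$) and then checks by elementary arithmetic, using $t^{1/t}\le 2$ for $t\ge1$, that $\big(\tfrac{1}{r-1}\big)^{1-1/(pr)}\le 2(r')^{1/p'}$. A self-contained proof would have to reproduce that argument; the route through the $A_1$ constant of $M_rw$ cannot give better than the first power of $r'$.
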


This is a consequence of the following estimate that can be found in \cite{LOP3} when $r \in (1,2]$:
$$
\|Tf\|_{\strt{1.7ex}L^p(w)}\le cpp'\,\Big(\frac{1}{r-1}\Big)^{1-1/pr}\,\|f\|_{\strt{1.7ex}L^p(M_rw)}
$$
since
$$
\Big(\frac{1}{r-1}\Big)^{1-1/pr}\leq (r')^{1-1/p+1/pr'} \leq 2
(r')^{1/p'}
$$
and $t^{1/t}\le 2$, $t\ge 1$.

\subsection{ Proof of the sharp reverse H\"older's inequality.}\label{sec:RHI}

\

We need the following lemma:

\begin{lemma} \label{llogllema}
For any cube $Q$ and any measurable function $w$,
\begin{equation}\label{logest}
\int_Q w\, \log(e +
\frac{w}{\ave{w}_Q})\ud x  \le 2^{d+1}
\int_{Q} M(w\chi_Q)\ud x,
\end{equation}
Hence,  if  $w\in A_{\infty}$
\begin{equation} \label{lLlogL-infty}
\sup_Q\frac{1}{w(Q)}\int_Q w(y) \log(e+\frac{w(y)}{\ave{w}_Q} )\, dy \leq 2^{d+1}\, [w]_{A_\infty}'
\end{equation}

\end{lemma}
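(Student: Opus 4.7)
\textbf{Proof proposal for Lemma~\ref{llogllema}.} The second statement is immediate from the first once one divides through by $w(Q)$, takes the supremum over $Q$, and invokes the definition of $[w]_{A_\infty}'$. So the task is to prove the pointwise-style inequality \eqref{logest}. The plan is to linearize the logarithm and then exploit a Calder\'on--Zygmund decomposition together with Cavalieri's principle.

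The starting identity is $\log(e+s)=1+\int_0^s\frac{du}{e+u}$, which, after Fubini, yields
\begin{equation*}
  \int_Q w\log\Big(e+\frac{w}{\ave{w}_Q}\Big)\ud x
  =\int_Q w\ud x+\int_0^\infty\frac{1}{e+u}\int_{\{w>u\ave{w}_Q\}\cap Q}w\ud x\,du.
\end{equation*}
The first term is harmless: since $\ave{w}_Q\le M(w\chi_Q)(x)$ for every $x\in Q$, one has $\int_Q w\le\int_Q M(w\chi_Q)$. For the double integral the plan is, for each $u\ge 1$, to replace the level set $\{w>u\ave{w}_Q\}\cap Q$ by the (a.e.\ larger, by Lebesgue differentiation in the local dyadic grid of $Q$) level set $\Omega(u):=\{M_Q^d(w\chi_Q)>u\ave{w}_Q\}$, where $M_Q^d$ is the dyadic maximal function relative to $Q$.

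The set $\Omega(u)$ decomposes as a disjoint union of maximal dyadic subcubes $Q_j\subset Q$ with $u\ave{w}_Q<\ave{w}_{Q_j}\le 2^{d}u\ave{w}_Q$ (standard Calder\'on--Zygmund stopping), so
\begin{equation*}
  \int_{\Omega(u)}w=\sum_j\ave{w}_{Q_j}|Q_j|\le 2^{d}u\ave{w}_Q\,|\Omega(u)|.
\end{equation*}
Using $\tfrac{u}{e+u}\le 1$ for $u\ge 1$, and Cavalieri $\int_0^\infty|\Omega(u)|\,du=\ave{w}_Q^{-1}\int_Q M_Q^d(w\chi_Q)$, the tail $u\ge 1$ contributes at most $2^d\int_Q M_Q^d(w\chi_Q)\le 2^d\int_Q M(w\chi_Q)$. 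The small-$u$ range $u\in(0,1)$ is handled by the trivial bound $\int_{\{w>u\ave{w}_Q\}\cap Q}w\le\int_Q w\le\int_Q M(w\chi_Q)$ together with $\int_0^1(e+u)^{-1}du\le 1$. Collecting the three contributions yields a constant $1+1+2^d\le 2^{d+1}$ in front of $\int_Q M(w\chi_Q)$, which is exactly the claim.

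The argument is essentially a routine bookkeeping exercise; there is no deep obstacle, only the need to be careful that the local (relative to $Q$) dyadic maximal function $M_Q^d$ really dominates $w$ a.e.\ on $Q$ (Lebesgue differentiation) and to control all the numerical constants to land precisely at $2^{d+1}$. The only slightly subtle point is the initial splitting at $u=1$: one wants to avoid bounds of the form $\int u/(e+u)\,du$ that would grow logarithmically at infinity, which is why $|\Omega(u)|$ must appear (rather than $w(\Omega(u))$) before applying Cavalieri.
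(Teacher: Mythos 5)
Your proof is correct and follows essentially the same route as the paper's: both split the distribution-function integral at the level $\ave{w}_Q$, bound the low range trivially, and control the high range by the reverse weak-type $(1,1)$ inequality $\int_{\{w>t\}\cap Q}w\le 2^d t\,|\{M(w\chi_Q)>t\}\cap Q|$ followed by Cavalieri's principle. The only difference is cosmetic: the paper quotes this reverse weak-type estimate as known, whereas you re-derive it on the spot via the local dyadic Calder\'on--Zygmund stopping cubes, which is its standard proof.
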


The essential idea of the proof can be traced back to the well  known  $L \log L$ estimate for $M$  in \cite{Stein-LlogL}. However these estimates are not homogeneus. A proof of this lemma within the context of spaces of homogeneous type can essentially be found in \cite[Lemma 8.5]{PW-JFA}    (see also  \cite[p.~17, inequality (2.15)]{Wilson-LNM} for a different proof).

\begin{proof}[Proof of Lemma \ref{llogllema}]
Fix a cube $Q$.  By homogeneity we assume that $\ave{w}_Q=1$. The key estimate follows from the ``reverse weak
type $(1,1)$ estimate'': if $w$ is nonnegative and $t>\ave{w}_Q$,
\begin{equation}
\frac{1}{t}\int_{\{x\in Q : w(x)> t\}} w\ud x \le 2^d\, |\{x\in Q:
M(w\chi_Q)(x)> t \}|. \label{localweak11backwards}
\end{equation}
Now,
\begin{equation*}
\begin{split}
  \frac{1}{|Q|} \int_Q w\log(e + w)\ud x
  &= \frac{1}{|Q|}\int_{0}^{\infty}\frac{1}{e+t}w(\{x\in Q : w(x) > t\}) \ud t \\
  &= \frac{1}{|Q|}\int_{0}^{1} + \, \frac{1}{|Q|}\int_{ 1}^{\infty} \cdots = I+II,
\end{split}
\end{equation*}
and
$$
I\le 1\leq  \frac{1}{|Q|}\,\int_{Q} M(w\chi_Q)\ud x.
$$
For $II$ we use estimate (\ref{localweak11backwards}):
\begin{equation*}
\begin{split}
  II &= \frac{1}{|Q|} \int_{1}^{\infty} \frac{1}{e+t} w(\{x \in Q: w(x) > t \})\ud t \\
  &\le \frac{2^d}{|Q|} \int_{1}^{\infty} \frac{t}{e+t} |\{x\in Q :M(w\chi_Q)(x) > t\}|\ud t \\
  & \le \frac{2^d}{|Q|}\int_{0}^{\infty} |\{x \in {Q} : M(w\chi_Q)(x) >t\}|\ud t \\
  &= \frac{2^d}{|Q|}\int_{Q} M(w\chi_Q)(x)\,\ud x.
\end{split}
\end{equation*}
This gives \eqref{logest} and  \eqref{lLlogL-infty} follows from the definiton of $[w]_{A_\infty}'$.
\end{proof}

The main use of the Lemma is the following key observation that we borrow from \cite{Wilson-LNM}, p.~45:

\begin{lemma}
Let $S \subset Q$ and let $\la>0$, then 
\begin{equation} \label{A-inftyproperty}
\frac{|S|}{|Q|}<e^{-\la}  \quad\mbox{implies}\quad  \frac{w(S)}{w(Q)}<  \frac{2^{d+2} [w]_{A_\infty}'}{\la}   + e^{-\la/2}
\end{equation}
\end{lemma}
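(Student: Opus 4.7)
The plan is to split $S$ according to the size of $w$ relative to its average over $Q$, estimating the two pieces by elementary means and by the $L\log L$-type bound \eqref{lLlogL-infty} respectively, and then optimize the threshold.

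For a parameter $t > 0$ to be chosen, decompose
\begin{equation*}
  S = S_1 \cup S_2, \qquad
  S_1 := \{x \in S : w(x) \le t\,\ave{w}_Q\}, \qquad
  S_2 := \{x \in S : w(x) > t\,\ave{w}_Q\}.
\end{equation*}
The piece $S_1$ is trivially controlled by the hypothesis on $|S|/|Q|$: since $w \le t\ave{w}_Q$ on $S_1$ and $\ave{w}_Q|Q| = w(Q)$, we get
\begin{equation*}
  \frac{w(S_1)}{w(Q)} \le t\,\frac{|S|}{|Q|}.
\end{equation*}

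For the piece $S_2$, I would exploit the fact that $\log(e + w/\ave{w}_Q) \ge \log(e + t)$ on $S_2$. Integrating and applying the sharp $L\log L$ bound \eqref{lLlogL-infty} from Lemma~\ref{llogllema},
\begin{equation*}
  w(S_2)\,\log(e+t)
  \le \int_{S_2} w\log\Bigl(e + \frac{w}{\ave{w}_Q}\Bigr)
  \le \int_Q w\log\Bigl(e + \frac{w}{\ave{w}_Q}\Bigr)
  \le 2^{d+1}\,[w]_{A_\infty}'\,w(Q),
\end{equation*}
so that $w(S_2)/w(Q) \le 2^{d+1}[w]_{A_\infty}'/\log(e+t)$.

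It remains to pick $t$. Setting $t = e^{\lambda/2}$, the hypothesis $|S|/|Q| < e^{-\lambda}$ gives $t\cdot|S|/|Q| < e^{-\lambda/2}$, while $\log(e + e^{\lambda/2}) \ge \lambda/2$. Summing the two estimates yields
\begin{equation*}
  \frac{w(S)}{w(Q)} < e^{-\lambda/2} + \frac{2^{d+2}[w]_{A_\infty}'}{\lambda},
\end{equation*}
as required. No real obstacle arises; the only subtlety is matching the constant $2^{d+2}$, which is dictated by the factor $2^{d+1}$ in \eqref{lLlogL-infty} together with the factor $1/2$ lost when bounding $\log(e+e^{\lambda/2})$ from below by $\lambda/2$. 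The choice $t=e^{\lambda/2}$ is natural because it balances the two error terms up to logarithmic factors, and any other choice would only change the constants.
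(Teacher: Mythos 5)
Your proof is correct and follows essentially the same route as the paper: the paper splits $S$ into $S\cap E_{\la/2}$ and $S\setminus E_{\la/2}$ with $E_{\la}=\{x\in Q: w(x)>e^{\la}\ave{w}_Q\}$, which is exactly your decomposition with $t=e^{\la/2}$, and bounds the large-$w$ piece by the same $L\log L$ estimate \eqref{lLlogL-infty}. The constants match for the same reasons you identify.
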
  

\begin{proof}
Indeed, if  \,$E_{\la}=\{x\in Q: w(x)>e^{\la}\ave{w}_Q \}$\, then \,$w(E_{\la}) \leq \frac{2^{d+1}}{\la}w(Q)$\, by  \eqref{lLlogL-infty}. Therefore: 
\begin{equation*}
\begin{split}
  w(S) &\leq  w(S\cap {E_{\la/2}}) +  w(S\setminus {E_{\la/2}} ) \leq \frac{2^{d+2}\,[w]_{A_\infty}'}{\la}\,w(Q)+ e^{\la/2}\ave{w}_Q\,|S| \\
  &\leq \frac{2^{d+2}\,[w]_{A_\infty}'}{\la}\,w(Q)+ e^{\la/2} e^{-\la} \, w(Q)\,  \qquad \mbox{by the hypothesis in \eqref{A-inftyproperty}} \\
  &=\frac{2^{d+2}\,[w]_{A_\infty}'}{\la}\,w(Q)+ e^{-\la/2} \, w(Q)\,
\end{split}
\end{equation*}
and this proves the claim \eqref{A-inftyproperty}.
\end{proof}

\begin{proof}  [Proof of  Theorem~\ref{thm:SharpRHI}]
Recall that we have to prove that 
\begin{equation*}
  \Big(\avgint_Q w^{r(w)}\Big)^{1/r(w)}\leq 2\avgint_Q w.
\end{equation*}
where 
$$r(w):=1+\frac{1}{ \tau_d\,[w]_{A_{\infty}}'},$$ 
and where $\tau_d$ is a large dimensional constant. 

Observe that by homogeneity we can assume that $\avgint_{Q} w=1$. We use the dyadic maximal function on the dyadic subcubes of a given $Q$:
\begin{equation*}
\begin{split}
  \int_{Q}w^{1+\eps}
  &\leq\int_{Q}M_d(w \chi_{Q})^{\eps}w 
    =\int_0^{\infty}\eps t^{\eps-1}w(\{x \in Q:M_d(w\chi_{Q})>t\})\ud t. \\
  &\leq\int_0^{1}\eps t^{\eps-1}w(Q)\ud t
    +\eps \int_1^{\infty}\eps t^{\eps}w(\{x \in Q:M_d(w\chi_{Q})>t\})\frac{dt}{t} \\
  &\leq |Q|+ \eps \sum_{k \geq 0}  \int_{a^k}^{a^{k+1}}t^{\eps}w(\{x \in Q:M_d(w\chi_{Q})>t\})\frac{dt}{t} \\
  &\leq |Q|  + \eps a^{\eps}\sum_{k\geq 0}a^{k\eps}\,  \int_{a^k}^{a^{k+1}}w(\{x \in Q:M_d(w\chi_{Q})>a^k\})\frac{dt}{t}, \quad \mbox{for} \,a\gg 1,  \\
  &=|Q|  + \eps a^{\eps}\,\log a \sum_{k\geq 0}a^{k\eps}\,  w(\Omega_k)
\end{split}
\end{equation*}
where 
$$\Omega_{k} = \{x\in Q: M_d(w\chi_{Q}(x)>a^k \}.
$$

Since $a^k\geq 1=\avgint_{Q} w$ we can consider the 
Calder\'{o}n--Zygmund decomposition  $w$ adapted to $Q$.  There is a family of maximal non-overlapping
dyadic cubes $\{Q_{k,j}\}$ strictly contained in $Q$ for which $\Omega_{k} = \bigcup_{j}Q_{k,j}$
and
\begin{equation}
a^{k} <
\avgint_{Q_{k,j}} w
 \le 2^da^{k}.
\label{LocalCZi}
\end{equation}
Now, 
$$
\sum_{k\geq 0}a^{k\eps}\,  w(\Omega_k)
=
\sum_{k,j}a^{k\eps} w( Q_{k,j}) \leq \sum_{k,j} \left(\frac{1}{|Q_{k,j}|}\int_{Q_{k,j}}w(y)\ud y\right)^{\eps}w(Q_{k,j})\\
$$

To now need to estimate $w(Q_{k,j})$  and we pursue similarly to Section~\ref{sec:2ndProofBuckley}, see in particular \eqref{claim}: For each $(k,j)$ we set $E_{k,j} = Q_{k,j}\setminus \Omega_{k+1}$. 
Observe that the sets of the family $E_{k,j}$ are pairwise disjoint. But exactly as in \eqref{claim} we have that for $a>2^d$ and for each $k,j$:
\begin{equation}\label{claim2}
|Q_{k,j}|< \frac{a}{a-2^d}\,|E_{k,j}|.
\end{equation}

I removed the repetition from Section~\ref{sec:2ndProofBuckley}.

We now apply  \eqref{A-inftyproperty} with $Q=Q_{k,j}$ and $S=Q_{k,j}\cap \Omega_{k+1}$. Choose 
$\la$ such that $e^{-\la}= \frac{2^d}{a}$, namely $\la=\log \frac{a}{2^d}$. Then applying \eqref{A-inftyproperty}  we have that 
$$
\frac{w(Q_{k,j}\cap \Omega_{k+1})}{w(Q_{k,j})}<  \frac{2^{d+2}\, [w]_{A_\infty}'}{\log \frac{a}{2^d}}   + (\frac{2^d}{a})^{1/2}.
$$
Since $a>2^d$ is available we choose $a=2^de^{ L\,[w]_{A_\infty}'}$, with $L$ a large dimensional constant to be chosen.  If in particular   $L\geq 2^{d+4}$ we have 
$$
\frac{w(Q_{k,j}\cap \Omega_{k+1})}{w(Q_{k,j})}<  \frac{2^{d+2}}{L}  + e^{-\,[w]_{A_\infty}'L/2} <\frac14+\frac14 = \frac12
$$
This yields that \,$w(Q_{k,j})\leq 2w(E_{k,j})$ and we can continue with the sum estimate: 
$$
\sum_{k\geq 0}a^{k\eps}\,  w(\Omega_k)
\leq
2 \sum_{k,j} \left(\frac{1}{|Q_{k,j}|}\int_{Q_{k,j}}w(y)\ud y\right)^{\eps}w(E_{k,j})
$$
$$
\leq
2 \sum_{k,j} 
\int_{E_{k,j}} M_d(w\chi_{Q})^{\eps}\,wdx \leq
2 \int_{Q} M_d(w\chi_{Q})^{\eps}\,wdx
$$

Combining estimates we end up with 
$$
\avgint_{Q}M_d(w \chi_{Q})^{\eps}w
\leq 1+  2\,\eps a^{\eps}\,\log a\, \avgint_{Q} M_d(w\chi_{Q})^{\eps}\,wdx
$$
for any $\eps>0$.  Recall that $a=2^de^{ L\,[w]_{A_\infty}'}  $. Hence if we choose 

I wrote the same steps a bit more compactly:

\begin{equation*}
   L=2^{d+4},\qquad
   \eps = \frac{1}{  2^7 L\,[w]_{A_\infty}'  }
   =    \frac{1}{  2^{11+d}\,[w]_{A_\infty}'  },
\end{equation*}
we can compute
\begin{equation*}
  2\,\eps a^{\eps}\,\log a  <  \frac12,\qquad
  \avgint_{Q}M_d(w \chi_{Q})^{\eps}w\leq 2,
\end{equation*}  
concluding the proof of the theorem.
\end{proof}

\

\subsection{Proof of Theorem  \ref{thm:A1strong(p,p)}, the strong case}

\

The proof is, as in \cite{LOP3}, just an application of  Lemma \ref{keylemma} with a specific parameter~$r$ coming from the 
sharp reverse H\"older inequality given by Theorem \ref{thm:SharpRHI}. Indeed,  since $w\in A_1\subset A_{\infty}$ and if we denote
$$r(w):=1+\frac{1}{  \tau_d\,       [w]_{A_\infty}' },$$ 
we have 
\begin{equation}
  \Big(\avgint_Q w^{r(w)}\Big)^{1/r(w)}\leq 2\avgint_Q w.
\end{equation}
Now by Lemma \ref{keylemma} with $r=r(w)$,  we have 
$$
\|Tf\|_{\strt{1.7ex}L^p(w)}\le c\,pp'\,(r')
^{1/p'}\|f\|_{\strt{1.7ex}L^p(M_rw)} \leq c\,pp'\,([w]_{A_{\infty}}')^{1/p'}\|f\|_{\strt{1.7ex}L^p(2Mw)}
$$
$$
\leq c\,pp'\,([w]_{A_{\infty}}')^{1/p'}\, [w]_{A_1}
^{1/p}\, \|f\|_{\strt{1.7ex}L^p(w)}
$$
using the standard notation \, $M_rw=M(w^r)^{1/r}$. This concludes  the proof of the theorem. 

\

\subsection{Proof of Theorem \ref{thm:A1weak(1,1)}, the weak case}

\

We follow here the classical method of Calder\'on-Zygmund 
with the modifications considered in \cite{perez94}. Applying the
Calder\'on-Zygmund decomposition to $f$ at level\, $\la$, \,  we get
a family of pairwise disjoint cubes $\{Q_j\}$ such that
$$\la<  \frac{1}{|Q_j|} \int_{Q_j} |f|\le 2^d \la$$
Let \, $\Om=\bigcup_jQ_j$ \, and \, $\widetilde \Om=\bigcup_j2Q_j$ \,. The
``good part'' is defined by
$$g=\sum_jf_{Q_j}\chi_{Q_j}(x)+f(x)\chi_{\Om^c}(x)$$
and the ``bad part'' $b$ as
$$b=\sum\limits_{j}b_{j}$$
where
$$b_{j}(x)=(f(x)-f_{Q_{j}})\chi_{\strt{1.7ex}Q_{j}}(x)$$
Then, $f=g+b$.  We split the level set as 
\begin{eqnarray*}
w\{x\in {\mathbb R}^d:|Tf(x)|>\la\}&\le& w(\widetilde \Om)+w\{x\in
(\widetilde \Om)^c:|Tb(x)|>\la/2\}\\
&+&w\{x\in (\widetilde \Om)^c:|Tg(x)|>\la/2\}=I+II+III.
\end{eqnarray*}

Exactly as in  \cite{perez94}, the main term is $III$. We first deal with the easy terms $I$ and $II$, which actually satisfy the better bound
\begin{equation*}
  I+II\lesssim\frac{1}{\lambda}[w]_{A_1}\|f\|_{L^1(w)}.
\end{equation*}
Indeed, the first term 
is essentially the level set of $Mf$:
$$
I= w\{x\in {\mathbb R}^d:Mf(x)>c_d\,\la\}
$$
and the result follows by the classical Fefferman-Stein inequality: 
$$
\big\| Mf \big\|_{L^{1,\infty}(w)} \le c_d\,  \|f\|_{L^{1}(Mw)},
$$
For the second term we use
the following estimate: there is a dimensional constant $c$ such that for any cube $Q$ and any function $b$ supported on $Q$ such that \,$\int_{Q} b(x) \, \ud x=0$ and any weight $w$ we have  
\begin{equation}\label{estimate-atom-like}
\int_{\R^{n} \setminus 2Q  }
\abs{ T b(y) }\, w(y)dy
\le c_d\,
\int_{Q} |b(y)| \, Mw(y)dy  
\end{equation}
This can be found in Lemma
3.3, p.~413, of \cite{GCRdF}.  Now, using this estimate with $w$
replaced by $w\chi_{ \R^{n} \setminus 2Q_{j}   } $
we have
$$
II
\le
\frac{c}{\lambda}
\int_{ \R^{n} \setminus \tilde{\Omega} }
\abs{ Tb (y) }\, w(y)dy 
$$
$$
\leq \frac{c}{\lambda}
\sum_{j}
\int_{ \R^{n} \setminus 2Q_{j} } \abs{Tb_{j}(y)}\,
w(y)dy
\leq 
\frac{c}{\lambda}
\sum_{j}
\int_{ Q_{j} } \abs{b_j(y)}\,
M(w\chi_{_{ \R^{n} \setminus 2 Q_{j}  }})(y)dy
$$
$$
\leq \,\frac{c}{\lambda}
\int_{{\mathbb R}^d}
\abs{ f (y) }\,
Mw(y)dy
+\frac{c}{\lambda}\,
\sum_{j}
\frac{ 1 }{ |Q_{j}|   }
\int_{ Q_{j} }
M( w \chi_{_{ \R^{n} \setminus 2Q_{j}  }} )(x)\, \ud x\, \int_{Q_{j}} \abs{ f(x)} \ud x
$$
Now, to estimate the inner sum we use that \,$M( \chi_{ \R^{n} \setminus 2Q } \mu )$ is essentially constant on $Q$, namely
\begin{equation}\label{MisConstant}
M( \chi_{ \R^{n} \setminus 2Q } \mu )(y)
\approx M( \chi_{_{ \R^{n} \setminus 2Q }} \mu
)(z)   \qquad  y,z \in Q,
\end{equation}
where the constants are dimensional. This fact that can be found in \cite{GCRdF} p.~159. Hence, the sum is controlled by 
$$
c_d\, \sum_{j}  \inf_{x\in Q} \,M(w\chi_{_{ \R^{n} \setminus 2Q_{j}  }})(x)  \int_{Q_{j}} \abs{ f(x)} \ud x
\leq
c_d\, \int_{ \R^{n} } \abs{f(x)}\,  Mw (x) \ud x.
$$
This gives the required estimate.

We now consider last term $III$, the singular term, to which we apply Chebyschev inequality and Lemma~\ref{keylemma} with exponents $p,r\in (1,\infty)$, which will chosen soon,  as follows
\begin{eqnarray*}  
&III &= w\{x\in (\widetilde \Om)^c:|Tg(x)|>\la/2\}\\
&&\le \Norm{T(g)}{L^p(w\chi_{(\widetilde \Om)^c})}^p \\
&&\le c(pp')^p (r')^{\frac{p}{p'}}\,\frac{1}{\la^p}\int_{{\mathbb R}^d}|g|^pM_r(w\chi_{(\widetilde \Om)^c})\ud x\\
&&= c(pp')^p (r')^{p-1}\,\frac{1}{\la}\int_{{\mathbb R}^d}|g|\,M_r(w\chi_{(\widetilde\Om)^c})\ud x.
\end{eqnarray*}
Now, after using the definition of $g$ we use the same argument as above using \eqref{MisConstant} with $M$ replaced by $M_r$. Then we have 
$$\int_{\Om} |g|M_r(w\chi_{(\widetilde \Om)^c})\ud x
\leq
\sum_{j}
\frac{1}{|Q_{j}|} \int_{Q_{j}} \abs{ f(x)}\ud x 
\int_{ Q_{j} }
M_r( w \chi_{_{ \R^{n} \setminus 2Q_{j}  }} )(x)\,\ud x
$$
$$
\leq
c_d\, \sum_{j}  \inf_{x\in Q} \,M_r(w\chi_{_{ \R^{n} \setminus 2Q_{j}  }})(x)  \int_{Q_{j}} \abs{ f(x)}\ud x
\leq
c_d\, \int_{ \Om } \abs{f(x)}\,  M_rw(x)\ud x,
$$
and of course
\begin{equation*}
\int_{\Om^c}|g|M_r(w\chi_{(\tilde\Om)^c})\ud x
\leq\int_{\Om^c}|f|M_r w\ud x.
\end{equation*}

Observe that $r$ is not chosen yet, and we conclude by choosing as above 
the exponent from  Theorem \ref{thm:SharpRHI}
$$r= r(w):=1+\frac{1}{\tau_d\,[w]_{A_{\infty}}' },$$ 
namely the sharp $A_{\infty}$ reverse H\"older's exponent. We also choose 
$$
p=1+\frac{1}{\log(e+[w]_{A_{\infty}}' )}
$$
where $p<2$ and \, $p'\approx \log(e+[w]_{A_{\infty}}')$. Then we continue with
$$w\{x\in (\widetilde \Om)^c:|Tg(x)|>\la/2\}\le
c\,[w]_{A_1} \,  
\log(e+[w]_{A_{\infty}}')  \,\frac{[w]_{ A_{\infty} }'^{2(p-1)}}{\la}   
\int_{{\mathbb R}^d}|f|\,2Mw \ud x.
$$
$$\leq  \frac{c[w]_{A_1}(e+\log[w]_{A_{\infty}}')}{\la}\int_{{\mathbb
R}^d}|f|w \ud x.$$
This estimate combined with the previous ones for $I$ and $II$ completes the proof.

\

\subsection{Proof of Theorem \ref{thm:dualA1weak(1,1)}, the dual weak case}

\

We adapt here the method from \cite{LOP2} where a variant of the Calder\'on-Zygmund decomposition is used, namely  the Calder\'on-Zygmund cubes are replaced by the Withney cubes. Fix $\la>0$, and set
$$\Om_{\la}=\{x\in {\mathbb R}^d:M^c_w(f/w)(x)>\la\},$$
where $M_w^c$ denotes the weighted centered maximal function.  
Let $\bigcup_jQ_j$ be the Whitney covering of $\Om_{\la}$ and set the Calder\'on-Zygmund decomposition $f=g+b$ with respect to these cubes: The
``good part'' is defined by
$$g=\sum_jf_{Q_j}\chi_{Q_j}(x)+f(x)\chi_{\Om^c}(x)$$
and then the  ``bad part'' $b$ is given by 
$$b=\sum\limits_{j}b_{j}$$
where
$$b_{j}(x)=(f(x)-\ave{f}_{Q_{j}})\chi_{\strt{1.7ex}Q_{j}}(x).$$

By the classical Besicovitch lemma we have,
$$w(\Om_{\la})\le \frac{c_n}{\la}  \Norm{f}{L^1( {\mathbb R}^d)}$$
Hence, we have to estimate
\begin{eqnarray*}
w\left\{x\not\in \Om_{\la}:\frac{|Tf(x)|}{w(x)}>\la\right\}&\le&
w\left\{x\not\in \Om_{\la}:\frac{|Tb(x)|}{w(x)}>\la/2\right\}\\&+&
w\left\{x\not\in \Om_{\la}:\frac{|Tg(x)|}{w(x)}>\la/2\right\}\equiv
I_1+I_2.
\end{eqnarray*}

By using again \eqref{estimate-atom-like} with $w=1$,  we obtain
$$
I_1\le \frac{2}{\la}\int_{{\mathbb R}^d\setminus \Om_{\la}}|Tb(x)| \ud x
\le\frac{c}{\la}\sum_j\int_{Q_j}|f-\ave{f}_{Q_j}| \ud x\le
\frac{c}{\la}\Norm{f}{L^1( {\mathbb R}^d)},
$$
where $c=c_{d,T}$.

To estimate $I_2$, we will use the dual version of Lemma \ref{keylemma}, namely:   
\begin{equation}\label{dualkeylemma}
\|Tf\|_{\strt{1.7ex}L^{p'}((M_rw)^{1-p'})}\le cpp'\,(r')
^{1/p'}\|f\|_{\strt{1.7ex}L^{p'}((w)^{1-p'}))}
\end{equation}
As before we  use Theorem \ref{thm:SharpRHI} with
$$r=r(w):=1+\frac{1}{ \tau_d\,[w]_{A_{\infty}}' },$$ 
such that
\begin{equation*}
  \Big(\avgint_Q w^{r}\Big)^{1/r}\leq 2\avgint_Q w .
\end{equation*}
Then \,$M_{r}w\leq 2Mw\leq 2[w]_{A_{1}}w$\, where as usual  \, $M_rw=M(w^r)^{1/r}$. Now, combining Chebyschev inequality with  \eqref{dualkeylemma} with parameter $p \in (1,\infty)$ that will be chosen soon, we have 
\begin{eqnarray*}
I_2
&\le& \frac{2^{p'}}{\la^{p'}}\int_{{\mathbb
R}^d}|Tg|^{p'}w^{1-p'} \ud x\\
&\le& \frac{2^{p'}[w]_{A_1}^{p'-1}}{\la^{p'}}\int_{{\mathbb
R}^d}|Tg|^{p'}M_rw^{1-p'} \ud x\\
&\le&   (cpp')^{p'} \,r'   \frac{[w]_{A_1}^{p'-1}}{\la^{p'}}\int_{{\mathbb
R}^d}|g|^{p'}w^{1-p'} \ud x
\\
&\le&(cp'p)^{p'} \,r' \,   \frac{[w]_{A_1}^{p'-1}}{\la^{p'}}
\left(\int_{{\mathbb
R}^d\setminus{{\Om_{\la}}}}|f|^{p'}w^{1-p'} \ud x
+\sum_{j}(\ave{|f|}_{Q_j})^{p'}\int_{Q_j}w^{1-p'} \ud x\right).
\end{eqnarray*}

We have that $|f|\le \la w$ a.e. on ${\mathbb
R}^d\setminus{{\Om_{\la}}}$, and hence
$$\int_{{\mathbb R}^d\setminus{{\Om_{\la}}}}|f|^{p'}w^{1-p'} \ud x\le
\la^{p'-1}\, \Norm{f}{L^1( {\mathbb R}^d)}.
$$

Next, following again \cite{LOP2}, by properties of the Whitney covering, it is easy to see that
for any cube $Q_j$ there exists a cube $Q_j^*$ such that $Q_j\subset
Q_j^*$, $|Q_j^*|\le c_n|Q_j|$, and the center of $Q_j^*$ lies
outside of $\Om_{\la}$. Therefore,
\begin{eqnarray*}
&&(\ave{|f|}_{Q_j})^{p'-1}\int_{Q_j}w^{1-p'} \ud x\le
[w]_{A_1}^{p'-1}(\ave{|f|}_{Q_j})^{p'-1}\int_{Q_j}(Mw)^{1-p'} \ud x\\
&&\le
[w]_{A_1}^{p'-1}|Q_j|\left(\frac{c\ave{|f|}_{Q_j^*}}{\ave{w}_{Q_j^*}}\right)^{p'-1}\le
(c\la[w]_{A_1})^{p'-1}|Q_j|,
\end{eqnarray*}
which gives
\begin{eqnarray*}
\sum_{j}(\ave{|f|}_{Q_j})^{p'}\int_{Q_j}w^{1-p'} \ud x&\le&
(c\la[w]_{A_1})^{p'-1}\sum_j\ave{|f|}_{Q_j}|Q_j|\\&\le&
(c\la[w]_{A_1})^{p'-1}\, \Norm{f}{L^1( {\mathbb R}^d)}.
\end{eqnarray*}
Combining the previous estimates, recalling that  $r'\approx [w]_{A_{\infty}}'$ we obtain
$$I_2\le c^{p'}\,[w]_{A_{\infty}}'\,p\,(p')^{p'}\frac{p^{p'-1}[w]_{A_1}^{2(p'-1)}}{\la}\,\Norm{f}{L^1( {\mathbb R}^d)},$$
and choosing now $p$ such that 
$$p'=1+\frac{1}{\log(e+[w]_{A_{1}})} \leq 2, $$ 
we get 
$$I_2\le\frac{c[w]_{A_{\infty}}'\log(e+[w]_{A_1})}{\la}\,\Norm{f}{L^1( {\mathbb R}^d)}.$$
This, along with estimates for $I_1$ and for $w(\Om_{\la})$,
completes the proof  of Theorem \ref{thm:dualA1weak(1,1)}.
%\end{proof}

\section{Commutators,  proof of Theorem \ref{main} and its consequences}  \label{commutator}

For the proof we need a sharp version of the John-Nirenberg Theorem, which can be essentialy found in  \cite{Journe} p. 31--32.

\begin{lemma}[Sharp John-Nirenberg]\label{sharpJ-N}
There are dimensional constants $0\leq \al_d<1<\be_d$ such that
\begin{equation}\label{sharpJN}
\sup_Q \frac{1}{  |Q| } \int_{Q} \exp \left (
\frac{\al_d}{\|b\|_{BMO}} |b(y)- \ave{b}_{Q}| \right  )\, dy \le \be_d.
\end{equation}
In fact we can take $\al_d=\frac{1}{2^{d+2}}$.
\end{lemma}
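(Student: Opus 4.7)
The plan is to prove the sharp John--Nirenberg inequality in the classical way via a Calder\'on--Zygmund iteration that tracks the constants explicitly, then pass from the distributional estimate to the exponential integrability bound.

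First, by homogeneity I may assume $\|b\|_{BMO}=1$. Define
\[
\varphi(\lambda):=\sup_Q\frac{1}{|Q|}\bigl|\{x\in Q:|b(x)-\ave{b}_Q|>\lambda\}\bigr|,
\]
and fix some $\lambda_0>1$ to be chosen later. For any cube $Q$, apply the Calder\'on--Zygmund decomposition to $|b-\ave{b}_Q|\chi_Q$ at height $\lambda_0$, which is legitimate since $\avgint_Q|b-\ave{b}_Q|\leq 1<\lambda_0$. This yields disjoint dyadic subcubes $\{Q_j\}\subset Q$ with $\lambda_0<\avgint_{Q_j}|b-\ave{b}_Q|\leq 2^d\lambda_0$, with $|b-\ave{b}_Q|\leq\lambda_0$ off $\bigcup Q_j$, and with $\sum|Q_j|\leq|Q|/\lambda_0$. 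The CZ bound also forces $|\ave{b}_{Q_j}-\ave{b}_Q|\leq 2^d\lambda_0$.

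The key recursion follows: for $\lambda>2^d\lambda_0$, the set $\{|b-\ave{b}_Q|>\lambda\}\cap Q$ is contained in $\bigcup Q_j$, and inside each $Q_j$ it is contained in $\{|b-\ave{b}_{Q_j}|>\lambda-2^d\lambda_0\}$. Summing gives
\[
\varphi(\lambda)\leq\frac{1}{\lambda_0}\varphi(\lambda-2^d\lambda_0).
\]
Iterating and using $\varphi\leq 1$ for the initial step yields $\varphi(\lambda)\leq\lambda_0\, e^{-c\lambda}$ with $c=\log\lambda_0/(2^d\lambda_0)$.

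Choosing $\lambda_0=2$ gives $c=\log 2/2^{d+1}$. Since $2\log 2>1$, the choice $\alpha_d:=1/2^{d+2}$ satisfies $\alpha_d<c$, and the layer-cake identity gives
\[
\frac{1}{|Q|}\int_Q e^{\alpha_d|b-\ave{b}_Q|}
=1+\alpha_d\int_0^\infty e^{\alpha_d\lambda}\varphi(\lambda)\,d\lambda
\leq 1+\frac{2\alpha_d}{c-\alpha_d}
=1+\frac{2}{2\log 2-1}=:\beta_d.
\]
Reintroducing the $BMO$ norm via homogeneity then gives \eqref{sharpJN}.

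The only point requiring care is the iteration: one needs $\lambda_0>1$ both so that the CZ stopping is nontrivial ($\sum|Q_j|\leq|Q|/\lambda_0<|Q|$, giving geometric shrinkage) and so that $\log\lambda_0>0$ (giving positive exponential rate $c$). The choice $\lambda_0=2$ is the simplest one making both work; optimizing over $\lambda_0$ would give a slightly better constant, but $\alpha_d=1/2^{d+2}$ is the clean value stated in the lemma and corresponds to staying safely below $c$ with room to spare.
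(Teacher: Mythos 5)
Your proof is correct. The paper does not prove this lemma itself but only cites Journ\'e (pp.~31--32), and what you have written is precisely that classical argument: the Calder\'on--Zygmund stopping at height $\lambda_0$, the good-$\lambda$ type recursion $\varphi(\lambda)\leq\lambda_0^{-1}\varphi(\lambda-2^d\lambda_0)$, and the resulting exponential decay with rate $c=\log\lambda_0/(2^d\lambda_0)$. The arithmetic also checks out: with $\lambda_0=2$ one gets $c=2\log 2/2^{d+2}>1/2^{d+2}=\al_d$, so the layer-cake integral converges and yields the admissible constant $\be_d=1+2/(2\log 2-1)$.
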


A key consequence of this lemma for the present purposes is the fact that $e^{\Re zb}w$ inherits the good weight properties of $w$ when the complex number $z$ is small enough. More precisely, for the $A_2$ constant we have:

\begin{lemma}\label{lem:expRezbwA2}
There are dimensional constants $\epsilon_d$ and $c_d$ such that
\begin{equation*}
  [e^{\Re zb}w]_{A_2}\leq c_d[w]_{A_2}\qquad\text{if }|z|\leq\frac{\epsilon_d}{\Norm{b}{BMO}\big([w]_{A_\infty}'+[w^{-1}]_{A_\infty}\big)}.
\end{equation*}
\end{lemma}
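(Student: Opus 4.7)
The plan is to bound the two-factor product defining $[e^{\Re zb}w]_{A_2}$ cube-by-cube by separating the exponential from the weight via H\"older, absorbing the weight part using the sharp reverse H\"older inequality (Theorem~\ref{thm:SharpRHI}), and the exponential part using the sharp John--Nirenberg inequality (Lemma~\ref{sharpJ-N}). Fix a cube $Q$ and subtract the mean: writing $\Re z b = \Re z(b-\ave{b}_Q) + \Re z\ave{b}_Q$, the constant factors $e^{\pm \Re z\ave{b}_Q}$ cancel in the product $\ave{e^{\Re zb}w}_Q\ave{e^{-\Re zb}w^{-1}}_Q$, so it suffices to estimate $\ave{e^{\pm \Re z(b-\ave{b}_Q)}w^{\pm 1}}_Q$.

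The first main step is to apply H\"older's inequality on each factor with the reverse H\"older exponents. Namely, with $r_1:=r(w)=1+1/(\tau_d[w]_{A_\infty}')$ and $r_2:=r(w^{-1})=1+1/(\tau_d[w^{-1}]_{A_\infty}')$, I write
\begin{equation*}
\avgint_Q e^{\Re z(b-\ave{b}_Q)}w
 \leq \Big(\avgint_Q e^{r_1'\Re z(b-\ave{b}_Q)}\Big)^{1/r_1'}\Big(\avgint_Q w^{r_1}\Big)^{1/r_1},
\end{equation*}
and analogously with $r_2$, $w^{-1}$, and $-\Re z$ in the second factor. Theorem~\ref{thm:SharpRHI}(a) controls the weight pieces by $2\avgint_Q w$ and $2\avgint_Q w^{-1}$ respectively.

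The second main step is the John--Nirenberg bound on the exponential pieces. Note that $r_1'=1+\tau_d[w]_{A_\infty}'$ and $r_2'=1+\tau_d[w^{-1}]_{A_\infty}'$, and by Proposition~\ref{prop:2Ainftys} we have $[w^{-1}]_{A_\infty}'\leq c_d[w^{-1}]_{A_\infty}$. Hence if we choose
\begin{equation*}
 \epsilon_d \leq \frac{\alpha_d}{2\tau_d(1+c_d)},
\end{equation*}
then the hypothesis $|z|\leq \epsilon_d/(\|b\|_{BMO}([w]_{A_\infty}'+[w^{-1}]_{A_\infty}))$ yields $r_i'|\Re z|\leq \alpha_d/\|b\|_{BMO}$ for $i=1,2$, so Lemma~\ref{sharpJ-N} gives
\begin{equation*}
 \Big(\avgint_Q e^{\pm r_i'\Re z(b-\ave{b}_Q)}\Big)^{1/r_i'}\leq \beta_d^{1/r_i'}\leq \beta_d.
\end{equation*}

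Multiplying the two resulting factor bounds gives
\begin{equation*}
 \ave{e^{\Re zb}w}_Q\ave{e^{-\Re zb}w^{-1}}_Q
 \leq (2\beta_d)^2\ave{w}_Q\ave{w^{-1}}_Q \leq 4\beta_d^2\,[w]_{A_2},
\end{equation*}
uniformly in $Q$, which yields the claimed bound with $c_d=4\beta_d^2$. The only mildly delicate point is ensuring that the same smallness constraint on $|z|$ simultaneously makes both exponentials integrable at the two different John--Nirenberg scales $r_1'$ and $r_2'$; this is handled precisely by letting the sum $[w]_{A_\infty}'+[w^{-1}]_{A_\infty}$ appear in the denominator of the hypothesis.
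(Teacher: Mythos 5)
Your proof is correct and follows essentially the same route as the paper's: H\"older with the sharp reverse H\"older exponent $r(w)$ (resp.\ $r(w^{-1})$), Theorem~\ref{thm:SharpRHI}(a) for the weight factor, Lemma~\ref{sharpJ-N} for the oscillation factor, and multiplication of the two cube averages so that $e^{\pm\Re z\ave{b}_Q}$ cancels. You are merely more explicit than the paper about choosing $\epsilon_d$ so that both John--Nirenberg constraints (at the two scales $r_1'$ and $r_2'$) hold simultaneously, and about passing from $[w^{-1}]_{A_\infty}'$ to $[w^{-1}]_{A_\infty}$ via Proposition~\ref{prop:2Ainftys}.
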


\begin{proof}
From the reverse H\"older inequality with exponent $r=1+1/(\tau_d\,[w]_{A_\infty}')$, and the John--Nirenberg inequality, we have for an arbitrary $Q$
\begin{equation*}
\begin{split}
   \avgint_Q  we^{\Re z\,b}
   &\leq\left ( \avgint_Q w^r \right )^{1/r}\left ( \avgint_Q e^{r'\Re z\,(b-\ave{b}_Q) }\right )^{1/r'} e^{\Re z\ave{b}_Q} \\
   &\leq \Big(2\avgint_Q w\Big)\cdot\beta_d\cdot e^{\Re z\ave{b}_Q},\qquad\text{if }\abs{z}\leq\frac{\epsilon_d}{\Norm{b}{\BMO}[w]_{A_\infty}'}.
\end{split}
\end{equation*}
By symmetry, we also have
\begin{equation*}
   \avgint_Q  w^{-1}e^{-\Re z\,b }
   \leq 2\beta_d\Big(\avgint_Q w^{-1}\Big) e^{-\Re z\ave{b}_Q}\qquad\text{if }\abs{z}\leq\frac{\epsilon_d}{\Norm{b}{\BMO}[w^{-1}]_{A_\infty}'}.
\end{equation*}
Multiplication of the two estimates gives
\begin{equation*}
  \Big(\avgint_Q  we^{\Re z\,b}\Big)\Big(\avgint_Q  w^{-1}e^{-\Re z\,b }\Big)
  \leq 4\beta_d^2,
\end{equation*}
for all $z$ as in the assertion, and completes the proof.
\end{proof}

There is an analogous statement for the $A_\infty$ constant $[\ ]_{A_\infty}'$. (A similar result for $[\ ]_{A_\infty}$ is also true, and easier, but we will have no need for it, and it is therefore left as an exercise for the reader.)

\begin{lemma}\label{lem:expRezbwAinfty}
There are dimensional constants $\epsilon_d$ and $c_d$ such that
\begin{equation*}
  [e^{\Re zb}w]_{A_\infty}'\leq c_d[w]_{A_\infty}'\qquad\text{if }|z|\leq\frac{\epsilon_d}{\Norm{b}{BMO}[w]_{A_\infty}'}.
\end{equation*}
\end{lemma}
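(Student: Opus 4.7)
The plan is to leverage the sharp reverse H\"older characterization from Theorem~\ref{thm:SharpRHI}(b): it suffices to exhibit an exponent $r>1$ with $r'\lesssim[w]_{A_\infty}'$ and a dimensional constant $K$ such that $\tilde w:=we^{\Re zb}$ satisfies $(\avgint_Q\tilde w^r)^{1/r}\leq K\avgint_Q\tilde w$ uniformly in $Q$. I will take $r:=1+\tfrac{1}{2\tau_d[w]_{A_\infty}'}$, chosen just below the sharp reverse H\"older exponent $r_0:=1+\tfrac{1}{\tau_d[w]_{A_\infty}'}$ that $w$ enjoys by Theorem~\ref{thm:SharpRHI}(a). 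A direct calculation gives $r'\approx 2\tau_d[w]_{A_\infty}'$, which is the correct order.

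For the upper bound, I set $p:=r_0/r>1$, so that $p'\approx 2\tau_d[w]_{A_\infty}'$ as well, and apply H\"older's inequality:
$$\avgint_Q\tilde w^r=\avgint_Q w^re^{r\Re zb}\leq\Big(\avgint_Q w^{r_0}\Big)^{r/r_0}\Big(\avgint_Q e^{rp'\Re zb}\Big)^{1/p'}.$$
The first factor is at most $(2\ave{w}_Q)^r$ by Theorem~\ref{thm:SharpRHI}(a). For the second, I would factor out $e^{rp'\Re z\ave{b}_Q}$ and apply the sharp John--Nirenberg inequality (Lemma~\ref{sharpJ-N}) to $e^{rp'\Re z(b-\ave{b}_Q)}$; this is legitimate provided $rp'|z|\|b\|_{BMO}\leq\al_d$, which is exactly the content of the hypothesis $|z|\leq\eps_d/(\|b\|_{BMO}[w]_{A_\infty}')$ once $\eps_d$ is chosen small enough. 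The outcome is $(\avgint_Q\tilde w^r)^{1/r}\leq 2\be_d\ave{w}_Q e^{\Re z\ave{b}_Q}$.

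For the matching lower bound on $\avgint_Q\tilde w$, I would write $w=\tilde w^{1/2}\cdot(we^{-\Re zb})^{1/2}$ and apply Cauchy--Schwarz to deduce
$$\ave{w}_Q^2\leq\Big(\avgint_Q\tilde w\Big)\Big(\avgint_Q we^{-\Re zb}\Big).$$
Since the hypothesis on $z$ is symmetric under $z\mapsto -z$, the upper bound argument run with $-z$ in place of $z$ yields $\avgint_Q we^{-\Re zb}\leq 2\be_d\ave{w}_Q e^{-\Re z\ave{b}_Q}$, whence $\avgint_Q\tilde w\geq\ave{w}_Q e^{\Re z\ave{b}_Q}/(2\be_d)$. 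Dividing the two bounds gives the reverse H\"older inequality for $\tilde w$ with exponent $r$ and dimensional constant $K=4\be_d^2$, and Theorem~\ref{thm:SharpRHI}(b) then delivers $[\tilde w]_{A_\infty}'\leq c_dKr'\lesssim[w]_{A_\infty}'$.

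The delicate point is producing a lower bound on $\avgint_Q\tilde w$ that depends only on the Wilson constant $[w]_{A_\infty}'$: a naive Jensen's inequality $\avgint_Q\tilde w\geq\exp\avgint_Q\log\tilde w$ forces one to compare the geometric mean of $w$ with its arithmetic mean, which introduces an unwanted factor of $[w]_{A_\infty}$ (generally larger than $[w]_{A_\infty}'$). The Cauchy--Schwarz trick above bypasses this defect by reducing the lower bound to a second application of the upper bound with the sign of $z$ reversed, which is free under the symmetric hypothesis on $z$.
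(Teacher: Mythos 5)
Your proof is correct, and its overall architecture is the same as the paper's: reduce the claim to a reverse H\"older inequality for $\tilde w=e^{\Re zb}w$ at an exponent $r$ with $r'\eqsim[w]_{A_\infty}'$, prove the upper bound by H\"older's inequality combined with the sharp reverse H\"older inequality for $w$ (Theorem~\ref{thm:SharpRHI}(a)) and the sharp John--Nirenberg lemma, and then invoke Theorem~\ref{thm:SharpRHI}(b). Where you genuinely diverge is the lower bound on $\avgint_Q\tilde w$, which you correctly identify as the delicate point. The paper gets there in two stages: first a ``reverse Jensen'' estimate $\avgint_Q w\leq 8\big(\avgint_Q w^{1-\delta}\big)^{1/(1-\delta)}$, derived from $\big(\avgint_Q w\big)^2\leq\big(\avgint_Q w^{1+\delta}\big)\big(\avgint_Q w^{1-\delta}\big)$ and the reverse H\"older inequality, and then a further H\"older step inserting $e^{\Re zb(1-\delta)}e^{-\Re zb(1-\delta)}$ with exponents $1/(1-\delta)$ and $1/\delta$, followed by another application of John--Nirenberg. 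Your route --- splitting $w=\tilde w^{1/2}\,(we^{-\Re zb})^{1/2}$, applying Cauchy--Schwarz, and recycling the already-proved upper bound with $z$ replaced by $-z$ (legitimate since the hypothesis on $z$ is symmetric) --- reaches the same conclusion in one stroke, avoids the negative power $w^{1-\delta}$ entirely, and makes transparent why no factor of $[w]_{A_\infty}$ (as opposed to $[w]_{A_\infty}'$) creeps in. The only price is a second, essentially free, invocation of the upper-bound computation; the constants ($K=4\beta_d^2$ versus the paper's $16\beta_d^2$) are comparable.
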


\begin{proof}
We know that $w$ satisfies the reverse H\"older inequality $\big(\avgint_Q w^{1+3\delta}\big)^{1/(1+3\delta)}\leq 2\avgint_Q w$ with a constant $\delta=c_d/[w]_{A_\infty}'< 2^{-1}$, where $c_d$ is a small dimensional constant. We will prove that $e^{\Re zb}w$ satisfies a reverse H\"older estimate
\begin{equation}\label{eq:expRezbw2prove}
    \Big(\avgint_Q (e^{\Re zb}w)^{1+\delta}\Big)^{1/(1+\delta)}\leq C_d\avgint_Q e^{\Re zb}w,
\end{equation}
for all $z$ as in the assertion.
This shows that 
$$[e^{\Re zb}w]_{A_\infty}'\leq 2C_d/\delta\leq c_d[w]_{A_\infty}'$$
by part b) of Theorem \ref{thm:SharpRHI}.

To prove \eqref{eq:expRezbw2prove}, we first have
\begin{equation*}
\begin{split}
  \Big(\avgint_Q &(e^{\Re zb}w)^{1+\delta}\Big)^{1/(1+\delta)}
  =e^{\Re z\ave{b}_Q}\Big(\avgint_Q(e^{\Re z(b-\ave{b}_Q)}w)^{1+\delta}\Big)^{1/(1+\delta)} \\
  &\leq e^{\Re z\ave{b}_Q}\Big(\avgint_Qe^{\Re z(b-\ave{b}_Q)(1+\delta)^2/\delta}\Big)^{\delta/(1+\delta)^2}
  \Big(\avgint_Q w^{(1+\delta)^2}\Big)^{1/(1+\delta)^2},
\end{split}
\end{equation*}
where we applied H\"older's inequality with exponents  $(1+\delta)/\delta$ and $1+\delta$. Now
\begin{equation*}
  (1+\delta)^2=1+2\delta+\delta^2\leq 1+3\delta,
\end{equation*}
and hence the last factor is bounded by $2\avgint_Q w$. Moreover, by Lemma~\ref{sharpJ-N}, we have
\begin{equation*}
   \avgint_Q e^{\Re z(b-\ave{b}_Q)(1+\delta)^2/\delta}\leq\beta_d \qquad\text{if }\abs{z}\leq\frac{\alpha_d\delta}{4\Norm{b}{BMO}}
\end{equation*}
So altogether
\begin{equation}\label{eq:expRezbwInterm}
  \Big(\avgint_Q (e^{\Re zb}w)^{1+\delta}\Big)^{1/(1+\delta)}
  \leq e^{\Re z\ave{b}_Q}\cdot \beta_d\cdot 2\avgint_Q w,
\end{equation}
and we concentrate on the last factor. We observe that
\begin{equation*}
\begin{split}
  \Big(\avgint_Q w\Big)^2
  &=\Big(\avgint_Q w^{(1+\delta)/2} w^{(1-\delta)/2}\Big)^2 \\
  &\leq\Big(\avgint_Q w^{1+\delta}\Big) \Big(\avgint_Q w^{1-\delta}\Big)
    \leq \Big(2\avgint_Q w\Big)^{1+\delta} \Big(\avgint_Q w^{1-\delta}\Big),
\end{split}
\end{equation*}
and hence
\begin{equation*}
\begin{split}
  \avgint_Q w &\leq 2^{(1+\delta)/(1-\delta)}\Big(\avgint_Q w^{1-\delta}\Big)^{1/(1-\delta)} \\
  &\leq 8\Big(\avgint_Q w^{1-\delta}e^{\Re zb(1-\delta)}e^{-\Re zb(1-\delta)}\Big)^{1/(1-\delta)} \\
  &\leq 8\Big(\avgint_Q w e^{\Re zb}\Big)\Big(\avgint_Q e^{-\Re zb(1-\delta)/\delta}\Big)^{\delta/(1-\delta)},
\end{split}
\end{equation*}
where we used H\"older's inequality with exponents $1/(1-\delta)$ and $1/\delta$.

Combining with \eqref{eq:expRezbwInterm}, we have shown that
\begin{equation*}
\begin{split}
  \Big(\avgint_Q & (e^{\Re zb}w)^{1+\delta}\Big)^{1/(1+\delta)} \\
  &\leq e^{\Re z\ave{b}_Q}\cdot \beta_d\cdot 16\Big(\avgint_Q w e^{\Re zb}\Big)\Big(\avgint_Q e^{-\Re zb(1-\delta)/\delta}\Big)^{\delta/(1-\delta)} \\
  &=16\beta_d\cdot \Big(\avgint_Q w e^{\Re zb}\Big)\Big(\avgint_Q e^{-\Re z(b-\ave{b}_Q)(1-\delta)/\delta}\Big)^{\delta/(1-\delta)} \\
  &\leq 16\beta_d\cdot \Big(\avgint_Q w e^{\Re zb}\Big)\cdot \beta_d,
\end{split}
\end{equation*}
provided that $\abs{z}\leq\alpha_d\delta/\Norm{b}{BMO}$ in the last step. Altogether, we have proven \eqref{eq:expRezbw2prove} with $C_d=16\beta_d^2$, under the condition that $\abs{z}\leq\alpha_d\delta/(4\Norm{b}{BMO})$, and this completes the proof.
\end{proof}

\begin{proof}[Proof of Theorem \ref{main}]  

The proof is a revised version of that of \cite{CPP} following the second proof in the classical $L^p$ theorem for commutators that can be found in \cite{CRW-commutators}. Indeed, we begin by considering the ``conjugate'' of the operator given by 
$$T_z(f)=e^{zb} T(e^{-zb}f).$$
where $z$ is any complex
number. Then, a computation gives (for instance for ``nice'' functions),
$$
[b,T](f)=\frac{d}{dz}T_z(f)|_{z=0}=\frac{1}{2\pi i}\int_{|z|=\ez}
\frac{T_z(f)}{z^2}\,\ud z\, , \qquad \ez>0,$$
by the Cauchy integral theorem.  Now, by Minkowski's inequality
\begin{equation}\label{minkows}
\|[b,T](f)\|_{L^2(w)}\leq \frac{1}{2\pi\,\ez^2} \,\int_{|z|=\ez}
\|T_z(f)\|_{L^2(w)}|\ud z|, \qquad \ez>0,
\end{equation}
all we need to do is estimate \, $
\|T_z(f)\|_{L^2(w)}=\|T(e^{-zb}f)\|_{L^2 (e^{2\Re z\,b }w)} \, ,$
for $|z|=\ez$ with appropriate $\ez$. 
By the main hypothesis of the theorem, we have
\begin{equation*}
  \|T(e^{-zb}f)\|_{L^{2}(w)}
  \le \vp\Big(  [e^{2\Re z\,b }w]_{A_2},[e^{2\Re z\,b }w]_{A_\infty}',[e^{2\Re z\,b }\si]_{A_{\infty}}'\Big)\Norm{e^{-zb}f}{L^2(e^{2\Re z\,b}w)},
\end{equation*}
where $\Norm{e^{-zb}f}{L^2(e^{2\Re z\,b}w)}=\Norm{f}{L^2(w)}$.

By Lemmas~\ref{lem:expRezbwA2} and \ref{lem:expRezbwAinfty} (the latter applied to both $w$ and $w^{-1}$), we have
\begin{equation*}
\begin{split}
  [we^{2\Re bz}]_{A_2}\leq C_d [w]_{A_2},\qquad 
  [we^{2\Re bz}]_{A_\infty}' &\leq C_d[w]_{A_\infty}',\\  [w^{-1}e^{-2\Re bz}]_{A_\infty}' &\leq C_d[w^{-1}]_{A_\infty}',
\end{split}
\end{equation*}
provided that
\begin{equation*}
  \abs{z}=\epsilon\leq\frac{\epsilon_d}{\Norm{b}{BMO}\big([w]_{A_\infty}'+[w^{-1}]_{A_\infty}'\big)}
\end{equation*}
Using this radius and the above estimates in \eqref{minkows}, we obtain
\begin{equation*}
\begin{split}
  \Norm{[b,T](f)}{L^2(w)}
  &\leq\frac{1}{2\pi\epsilon^2}\int_{\abs{z}=\epsilon}\varphi\big(C_d[w]_{A_2},C_d[w]_{A_\infty}',C_d[w^{-1}]_{A_\infty}'\big)\Norm{f}{L^2(w)}\abs{\ud z} \\
  &\leq C_d\Norm{b}{BMO}\big([w]_{A_\infty}'+[w^{-1}]_{A_\infty}'\big) \\
  &\qquad\times\varphi\big(C_d[w]_{A_2},C_d[w]_{A_\infty}',C_d[w^{-1}]_{A_\infty}'\big)\Norm{f}{L^2(w)}.
\end{split}
\end{equation*}

This concludes the proof of the main part of the theorem. The estimate for $T^k_b$ is deduced by iterating from the case $k=1$. 
\end{proof}

\section{Examples} \label{examples}

We compare our new estimates with earlier quantitative results by means of some examples.

\subsection{Power weights and the maximal inequality}

\

Let $d=1$ and $p\in(1,\infty)$ be fixed; we do not pay attention to the dependence of multiplicative constants on $p$.
For $w(x)=\abs{x}^{\alpha}$ and $-1<\alpha<p-1$, one easily checks that
\begin{equation*}
  [w]_{A_p} \eqsim\frac{1}{1+\alpha}\cdot\frac{1}{((p-1)-\alpha)^{p-1}},
\end{equation*}
\begin{equation*}
  [w]_{A_\infty} \eqsim \frac{1}{1+\alpha},\qquad [w^{-1/(p-1)}]_{A_{\infty}}\eqsim\frac{1}{(p-1)-\alpha};
\end{equation*}
moreover, the functionals $[\ ]_{A_\infty}$ and $[\ ]_{A_\infty}'$ are comparable for these weights.

Letting $\alpha\to-1$ or $\alpha\to p-1$, this shows that we have power weights with $[w]_{A_p} = t\gg 1$ and either $[w]_{A_\infty}\eqsim t$ and $[w^{-1/(p-1)}]_{A_\infty}\eqsim 1$, or $[w]_{A_\infty}\eqsim 1$ and $[w^{-1/(p-1)}]_{A_\infty}\eqsim t^{1/(p-1)}$.

With $[w]_{A_p}\eqsim [w]_{A_\infty}\eqsim t\gg 1$ and $[w^{-1/(p-1)}]_{A_\infty}\eqsim 1$, our maximal estimate
\begin{equation*}
  \Norm{M}{\mathscr{B}(L^p(w))}\lesssim\big([w]_{A_p}[w^{-1/(p-1)}]_{A_\infty}\big)^{1/p}\eqsim t^{1/p}
\end{equation*}
clearly improves on Buckley's bound
\begin{equation*}
  \Norm{M}{\mathscr{B}(L^p(w))}\lesssim [w]_{A_p}^{1/(p-1)}\eqsim t^{1/(p-1)}.
\end{equation*}

Despite this improvement over earlier estimates, our bounds fail to provide a two-sided estimate for the norm of the maximal operator:  A.~Lerner and S.~Ombrosi \cite{LO:personal} have constructed a family of weights which shows that
\begin{equation*}
   \inf_{w\in A_2}\frac{\Norm{M}{\mathscr{B}(L^2(w))}}{\big([w]_{A_2}[w^{-1}]_{A_\infty}'\big)^{1/2}}=0.
\end{equation*}
The weights of their example are products of power weights and the two-valued weights considered in the next subsection.

\

\subsection{Two-valued weights and Calder\'on--Zygmund operators}

\

The estimates for the Muckenhoupt constants of power weights in the previous subsection show that
\begin{equation*}
  [w]_{A_2}\eqsim[w]_{A_\infty}+[w^{-1}]_{A_\infty}
  \eqsim[w]_{A_\infty}'+[w^{-1}]_{A_\infty}' 
  \qquad\text{for}\quad w(x)=\abs{x}^{\alpha},\quad d=1,
\end{equation*}
so the improvement of our bound
\begin{equation*}
  \Norm{T }{\mathscr{B}(L^2(w))}\lesssim [w]_{A_2}^{1/2}\big([w]_{A_{\infty}}'+[\si]_{A_{\infty}}'\big)^{1/2}
\end{equation*}
over $\Norm{T }{\mathscr{B}(L^2(w))}\lesssim [w]_{A_2}$ is invisible to such weights.

However, the difference can be observed with weights of the form $w=t\cdot \chi_E+\chi_{\R\setminus E}$, where $t>0$  and $E\subset\R$ is a measurable set, so that both $E$ and $\R\setminus E$ have positive Lebesgue measure. As $I$ ranges over all intervals of $\R$, the ratio $\abs{E\cap I}/\abs{I}$ ranges (at least) over all values $\alpha\in(0,1)$, and hence
\begin{equation*}
  [w]_{A_2}=\sup_{\alpha\in(0,1)}(\alpha t+1-\alpha)(\alpha t^{-1}+1-\alpha)=\frac{(t+1)^2}{4t},
\end{equation*}
and
\begin{equation*}
  [w]_{A_\infty}=\sup_{\alpha\in(0,1)}(\alpha t+1-\alpha)e^{-\alpha\log t}=:\sup_{\alpha\in(0,1)}f(\alpha).
\end{equation*}
Now $f'(\alpha)=0$ at the unique point $\hat\alpha=1/\log t-1/(t-1)\in(0,1)$, and so
\begin{equation*}
  [w]_{A_\infty}=f(\hat\alpha)=e^{-1}\frac{t-1}{\log t}\exp\frac{\log t}{t-1}
  \eqsim\begin{cases} t/\log t, & t\gg 1, \\ t^{-1}/\log t^{-1}, & 0<t\ll 1. \end{cases}
\end{equation*}

Assume then that $t\gg 1$ so that $[w]_{A_{\infty}}\eqsim t/\log t$. Since $\si$ is a weight of the same form with $t^{-1}\ll 1$ in place of $t$, we also have $[\si]_{A_\infty}\eqsim t/\log t$. Thus
\begin{equation*}
  [w]_{A_2}\eqsim t,\qquad \Norm{T}{\mathscr{B}(L^2(w))}\lesssim [w]_{A_2}^{1/2}\big([w]_{A_\infty}+[\si]_{A_\infty}\big)^{1/2}
  \eqsim\frac{t}{\sqrt{\log t}}.
\end{equation*}
In particular, the above estimates already show that
\begin{equation*}
  \inf_{w\in A_2}\frac{\Norm{T}{\mathscr{B}(L^2(w))}}{[w]_{A_2}}=0.
\end{equation*}

\

If we use the sharper version of our $A_2$ theorem with the weight constants $[\ ]_{A_\infty}'$ instead, we find that $\Norm{T}{\mathscr{B}(L^2(w))}$ actually grow much slower than $[w]_{A_2}$:

\begin{lemma}\label{lem:wAinftyPrime}
For $w=t\cdot\chi_E+\chi_{\R\setminus E}$ and $t\geq 3$, we have
\begin{equation*}
  [w]_{A_\infty}'\leq 4\log t.
\end{equation*}
\end{lemma}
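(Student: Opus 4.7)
My plan is to estimate the quantity defining $[w]_{A_\infty}'$ on each interval $Q$ separately, via a layer-cake decomposition combined with the classical weak-type $(1,1)$ inequality for the Hardy--Littlewood maximal operator. Note that the easy bound $[w]_{A_\infty}' \leq c_d[w]_{A_\infty}$ from Proposition~\ref{prop:2Ainftys}, combined with the computation $[w]_{A_\infty} \eqsim t/\log t$ performed just above, gives only $[w]_{A_\infty}' \lesssim t/\log t$, which is vastly weaker than $\log t$; so a direct argument is required.

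The starting point will be two elementary pointwise bounds stemming from the fact that $w$ takes only the values $1$ and $t$: first, $\avgint_Q w \geq 1$ for every interval $Q$; second, $M(w\chi_Q)(x) \leq t$ for every $x \in Q$, because for any interval $R \ni x$ the density satisfies $w(R \cap Q)/|R| \leq t \cdot |R\cap Q|/|R| \leq t$. Together these bracket the range over which the distribution function of $M(w\chi_Q)$ is nonzero.

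With these in hand, I will write the layer-cake identity
\begin{equation*}
\int_Q M(w\chi_Q) \ud x = \int_0^t \big|\{x \in Q : M(w\chi_Q)(x) > s\}\big|\ud s
\end{equation*}
and split the integration at the threshold $s = \avgint_Q w$. Below the threshold I use the trivial bound $|Q|$; above it I apply the weak-type $(1,1)$ inequality $|\{M(w\chi_Q) > s\}| \leq \tfrac{3}{s} w(Q)$, with the standard one-dimensional Vitali constant $3$. Summing the two contributions should yield
\begin{equation*}
\int_Q M(w\chi_Q) \leq \avgint_Q w\cdot |Q| + 3\, w(Q)\log \frac{t}{\avgint_Q w} \leq (1 + 3\log t)\,w(Q),
\end{equation*}
where the last step uses $\avgint_Q w \geq 1$.

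The only delicate point, and the reason for the threshold $t\geq 3$ in the statement, is the final reconciliation of constants: dividing by $w(Q)$ and taking the supremum over $Q$ gives $[w]_{A_\infty}' \leq 1 + 3\log t$, which must be absorbed into $4\log t$. Since $\log 3 > 1$, the hypothesis $t \geq 3$ is precisely what permits this absorption, completing the argument.
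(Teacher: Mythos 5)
Your argument is correct, and it takes a genuinely different (and shorter) route than the paper. The paper first establishes the pointwise identity $\chi_I M(w\chi_I)=\chi_I\bigl(1+(t-1)M(\chi_{I\cap E})\bigr)$, applies the layer-cake formula to $M(\chi_{I\cap E})$ with the split at $a=\abs{I\cap E}/\abs{I}$, and then bounds the resulting expression by optimizing over the density parameter $\alpha=\abs{I\cap E}/\abs{I}$ via a two-case analysis ($\alpha\gtrless\tau^{-1}$, $\tau=t-1$). You instead apply the layer-cake formula directly to $M(w\chi_Q)$, splitting at $\avgint_Q w$ and using only the pointwise bracket $1\leq w\leq t$; this avoids the reduction to $M(\chi_{I\cap E})$ and the case analysis entirely, and in fact proves the stronger statement that $[w]_{A_\infty}'\leq 1+C\log t$ for \emph{any} weight with $1\leq w\leq t$, not just two-valued ones. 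What your streamlined version gives up is the intermediate density-parametrized bound \eqref{eq:wAinftyPrime}, which the paper reuses in the dyadic-shift subsection (where $\alpha\in\{0,\tfrac12,1\}$ yields $[w]_{A_\infty}^{\prime,d}\leq 1+\log 2$, uniformly in $t$); your bound would only give $O(\log t)$ there. A minor remark on constants: the weak-type $(1,1)$ norm of the uncentered maximal operator on $\R$ is actually $2$ (as the paper uses), but your Vitali constant $3$ is a valid upper bound and still closes the argument since $\log 3>1$.
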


(With the earlier estimate for $[w]_{A_\infty}$, this shows that $[w]_{A_\infty}$ can be exponentially larger than $[w]_{A_\infty}'$.)

\begin{proof}
Note that
\begin{equation*}
\begin{split}
  \chi_I M(w\chi_I)
  &=\chi_I \sup_{J\subseteq I}\chi_J\avgint_J w
  =\chi_I \sup_{J\subseteq I}\chi_J\frac{1}{\abs{J}}(\abs{J\setminus E}+t\abs{J\cap E}) \\
  &=\chi_I \sup_{J\subseteq I}\chi_J\big(1+(t-1)\frac{\abs{J\cap E}}{\abs{J}}\big)
    =\chi_I\big(1+(t-1) M(\chi_{I\cap E})\big),
\end{split}
\end{equation*}
and hence, abbreviating $\tau:=t-1$,
\begin{equation*}
\begin{split}
  \int_I M(w\chi_I)
  &=\abs{I}+\tau \int_I M(\chi_{I\cap E})
  =\abs{I}+\tau \int_0^1\abs{I\cap\{ M(\chi_{I\cap E})>\lambda\}}\ud\lambda \\
  &\leq\abs{I}+\tau\Big(\int_0^a\abs{I}\ud\lambda+\int_a^1\frac{2}{\lambda}\abs{I\cap E}\ud\lambda\Big) \\
  &=\abs{I}+\tau\Big(a\abs{I}+2\abs{I\cap E}\log\frac{1}{a}\Big) \\
  &=\abs{I}+\tau\abs{I\cap E}\Big(1+2\log\frac{\abs{I}}{\abs{I\cap E}}\Big),\qquad a:=\frac{\abs{I\cap E}}{\abs{I}},
\end{split}
\end{equation*}
where the factor $2$ is the weak-type $(1,1)$ norm of the maximal operator on the real line.
Since $w(I)=\abs{I}+\tau\abs{I\cap E}$, we have
\begin{equation}\label{eq:wAinftyPrime}
\begin{split}
  [w]_{A_\infty}'
  =\sup_I\frac{1}{w(I)}\int_I M(w\chi_I)
  &\leq\sup_{\alpha\in(0,1)}\frac{1+\tau\alpha(1+2\log\alpha^{-1})}{1+\tau\alpha} \\
  &=1+2\sup_{\alpha\in(0,1)}\frac{\tau\alpha}{1+\tau\alpha}\log\frac{1}{\alpha}, \\
\end{split}
\end{equation}
recalling that the ratio $\abs{I\cap E}/\abs{I}$ attains at least all values $\alpha\in(0,1)$ as $I$ ranges over all intervals.

If $\alpha\geq\tau^{-1}$, then $\log\alpha^{-1}\leq\log\tau$, while $\tau\alpha/(1+\tau\alpha)\leq 1$. If $\alpha\leq\tau^{-1}$, then
\begin{equation*}
  \tau\alpha\log\frac{1}{\alpha}=\tau\alpha\log\frac{1}{\tau\alpha}+\tau\alpha\log\tau\leq\frac{1}{e}+\log\tau,
\end{equation*}
as $x\log x^{-1}\leq e^{-1}$ and $x\leq 1$ for $x=\tau\alpha\in(0,1)$. Altogether, recalling that $t=\tau+1\geq 3$, we have
\begin{equation*}
 [w]_{A_\infty}'\leq
 1+2\big(\frac{1}{e}+\log\tau)\leq(1+\frac{2}{e})+2\log t\leq 4\log t.\qedhere
\end{equation*}
\end{proof}

Since $\sigma=w^{-1}$ is a weight of the same form, we find that for these particular weights,
\begin{equation*}
  [w]_{A_2}\eqsim t,\qquad\Norm{T}{\mathscr{B}(L^2(w))}\lesssim [w]_{A_2}^{1/2}\big([w]_{A_\infty}'+[\sigma]_{A_\infty}'\big)^{1/2}
  \lesssim \big(t\log t)^{1/2},
\end{equation*}
so indeed $\Norm{T}{\mathscr{B}(L^2(w))}$ can grow much slower than $[w]_{A_2}$ for such particular families of weights. This example also motivates the use of the $A_\infty$ constants $[w]_{A_\infty}'$, rather than $[w]_{A_\infty}$, whenever this is possible.

\

In a similar way we can show that the main  result from Theorem \ref{thm:A1strong(p,p)} strictly improves on the earlier estimate \eqref{A1strong(p,p)}. Indeed, if we let $w$ be the previous weight with  $t\gg 1$ so that $w_{A_1}\eqsim t$ and $[w]_{A_{\infty}}\eqsim t/\log t$, then
$$
 [w]_{A_1}^{1/p} \, [w]_{A_{\infty}}^{1/p'} \eqsim \frac{t}{ (\log t)^{1/p'} }.
$$
As above, this family of weights shows that
\begin{equation*}
\inf_{w\in A_1}\frac{\Norm{T}{\mathscr{B}(L^p(w))}}{[w]_{A_1}}=0, \qquad 1<p<\infty.
\end{equation*}

\

\subsection{Two-valued weights and dyadic shifts}

\

Although it was not stated explicitly above, from the proof it is clear that our weighted bound for the dyadic shifts only depends on the dyadic Muckenhoupt constants, where the supremum is over dyadic cubes only, instead of all cubes. This makes a difference for the two-valued weights $w=t\cdot \chi_E+\chi_{\R\setminus E}$ considered above, when the set $E$ is appropriately chosen. Indeed, with $E:=\bigcup_{k\in\Z}[2k,2k+1)$, one observes that the ratio $\abs{E\cap I}/\abs{I}$ only attains the values $0,\tfrac12,1$ as $I$ ranges over the dyadic intervals. Consequently, the dyadic $A_\infty$ constant has a different expression
\begin{equation*}
  [w]_{A_\infty}^d=\max_{\alpha\in\{0,\tfrac12,1\}}(\alpha t+1-\alpha)e^{-\alpha t}=\frac{t+1}{2\sqrt{t}}
  =([w]_{A_2}^d)^{1/2},
\end{equation*}
where $[w]_{A_2}^d=[w]_{A_2}$, as one easily observes. 
Repeating the proof of Lemma~\ref{lem:wAinftyPrime} in the dyadic case (recalling that the weak-type $(1,1)$ norm is $C_d=1$ for the dyadic maximal operator), we get in place of \eqref{eq:wAinftyPrime} that
\begin{equation*}
  [w]_{A_\infty}^{\prime,d}\leq 1+\sup_{\alpha\in\{0,1/2,1\}}\frac{\tau\alpha}{1+\tau\alpha}\log\frac{1}{\alpha}
  =1+\frac{\tfrac12\tau}{1+\tfrac12\tau}\log 2\leq 1+\log 2.
\end{equation*}
So these constants are actually uniformly bounded over the choice of the parameter~$t$.

By symmetry, we also have $[w^{-1}]_{A_\infty}^d=[w]_{A_\infty}^d$ and $[w^{-1}]_{A_\infty}^{\prime,d}=[w]_{A_\infty}^{\prime,d}$, and hence, for this particular $E$ and $w=t\cdot \chi_E+\chi_{\R\setminus E}$,
\begin{equation*}
  \Norm{\sha}{\mathscr{B}(L^2(w))}\lesssim (r+1)^2\big([w]_{A_2}^{d}\big)^{1/2}\big([w]_{A_\infty}^{\prime,d}+[w^{-1}]_{A_\infty}^{\prime,d}\big)^{1/2}
  \lesssim(r+1)^2\big([w]_{A_2}^d\big)^{1/2}.
\end{equation*}
The first $A_\infty$ constants $[\ ]_{A_\infty}^d$ would have given the weaker bound $\Norm{\sha}{\mathscr{B}(L^2(w))}\lesssim(r+1)^2\big([w]_{A_2}^d\big)^{3/4}$, instead.

\

\subsection{The extrapolated bounds for Calder\'on--Zygmund operators}

\

It is interesting to compare our estimate \eqref{cor:CZupper}, namely
\begin{equation}\label{eq:CZupperRepeat}
\begin{split}
    \Norm{T}{\mathscr{B}(L^p(w))}
  &\lesssim[w]_{A_p}^{2/p-1/[2(p-1)]}\big([w]_{A_\infty}^{1/[2(p-1)]}+[\si]_{A_\infty}^{1/2}\big)([w]_{A_\infty}')^{1-2/p} \\
  &\lesssim[w]_{A_p}^{2/p}([w]_{A_\infty}')^{1-2/p}, \\
\end{split}
\end{equation}
which is valid for any Calder\'on--Zygmund operator and for all $p\geq 2$, with an estimate implicitly contained in the proof of a related result by Lerner \cite{Lerner:Ap}, Theorem~1.2. He considers maximal trunctations $T_*$ of convolution-type Calder\'on--Zygmund operators, and obtains the following bound:
\begin{equation}\label{eq:LernerAinfty}
\begin{split}
  \Norm{T_*}{\mathscr{B}(L^p(w))}
  &\lesssim[w]_{A_p}^{1/2}([w]_{A_\infty}')^{1/2}+\Norm{M}{\mathscr{B}(L^p(w))},\\
  &\lesssim[w]_{A_p}^{1/2}([w]_{A_\infty}')^{1/2},\qquad p\in[3,\infty),
\end{split}
\end{equation}
where the second estimate is an application of Buckley's result (we do not even need our improvement at this point),
\begin{equation*}
  \Norm{M}{\mathscr{B}(L^p(w))}\lesssim[w]_{A_p}^{1/(p-1)}\leq[w]_{A_p}^{1/2},\qquad p\in[3,\infty),
\end{equation*}
In \eqref{eq:LernerAinfty},  the factor $([w]_{A_\infty}')^{1/2}$ comes from an estimate of Wilson \cite{Wilson:89} relating the weighted norms of the grand maximal function and a certain square function, while $[w]_{A_p}^{1/2}$ is Lerner's  bound for the weighted norm of such square functions (whose exponent is optimal by \cite{CMP}).

To simplify comparison, let us only consider the simpler form of our bound \eqref{eq:CZupperRepeat}. Then the sum of  the powers of $[w]_{A_p}$ and $[w]_{A_\infty}'$ in both \eqref{eq:CZupperRepeat} and \eqref{eq:LernerAinfty} is $2/p+(1-2/p)=1/2+1/2=1$, and the sharper bound is the one where the larger weight constant $[w]_{A_p}$ has the smaller power. We have $2/p\leq 1/2$ if and only if $p\geq 4$, and hence Lerner's bound is sharper for $p\in[3,4)$ and ours for $p\in(4,\infty)$.
This indicates that the present results might not be the last word on joint $A_p$--$A_\infty$-control, but there is place for further investigation.

\section{Proof of the end-point theory at $p=\infty$}

The proof again relies on the sharp reverse H\"older inequality Theorem \ref{thm:SharpRHI}: if \,$w \in A_{\infty}$ and if we let 
$$r=r(w):=1+\frac{1}{c_d\,[w]_{A_{\infty}}'},$$ 
then
$$
\left (  \avgint_Qw^{r} \ud x\right )^{\frac{1}{r}} \le
\frac{2}{|Q|}\int_Qw\,. $$

\begin{proof}[Proof of Theorem~\ref{thm:embNorm}]
For $c=\ave{f}_Q$,
\begin{equation*}
\begin{split}
  \frac{1}{w(Q)}\int_Q\abs{f-c}w
  &=\frac{\abs{Q}}{w(Q)}\avgint_Q\abs{f-c}w \\
  &\leq\frac{\abs{Q}}{w(Q)}\Big(\avgint_Q\abs{f-c}^{r(w)'}\Big)^{1/r(w)'}\Big(\avgint_Q w^{r(w)}\Big)^{1/r(w)} \\
  &\leq\frac{\abs{Q}}{w(Q)} \Big(C_d r(w)'\Norm{f}{\BMO}\Big)\Big(2\avgint_Q w\Big) \\
  &= C_d r(w)'\Norm{f}{\BMO}\leq C_d[w]_{A_{\infty}}'\Norm{f}{\BMO},
\end{split}
\end{equation*}
which shows that $\Norm{f}{\BMO(w)}\leq C_d[w]_{A_{\infty}}'\Norm{f}{\BMO}$. Note that we used the sharp order of growth of the local $L^p$ norms of $\BMO$ functions as $p\to\infty$, which follows easily from the exponential integrability.

To see the sharpness for $d=1$, consider $w(x)=\abs{x}^{-1+\eps}$, which has $[w]_{A_{\infty}}\eqsim[w]_{A_\infty}'\eqsim 1/\eps$ and $f(x)=\log\abs{x}$. We check that
\begin{equation*}
  \Norm{f}{\BMO(w)}\geq\inf_a\frac{1}{w([0,1])}\int_0^1\abs{\log\frac1x-a}w(x)\ud x\geq\frac{c}{\eps}\geq c[w]_{A_{\infty}}\geq c[w]_{A_\infty}',
\end{equation*}
which proves the claim. It is immediate that $w([0,1])=\int_0^1 x^{-1+\eps}\ud x=1/\eps$. It remains to compute
\begin{equation*}
  \int_0^1\abs{\log\frac1x-a}x^{-1+\eps}\ud x
  =\int_0^{\infty}\abs{t-a}e^{-\eps t}\ud t
  =\frac{1}{\eps^2}\int_0^{\infty}\abs{u-\eps a}e^{-u}\ud u.
\end{equation*}
It suffices to check that $\psi(\alpha):=\int_0^{\infty}\abs{u-\alpha}e^{-u}\ud u\geq c>0$ for all $\alpha\in\R$. But this is an easy calculus exercise.
\end{proof}

We now prove Corollary~\ref{cor:LinftyBMO} on end-point estimates for Calder\'on--Zygmund operators.

\begin{proof}[Proof of Corollary~\ref{cor:LinftyBMO}]
For the positive estimate, it suffices to factorize $T=I\circ T$, where $T:L^{\infty}\to\BMO$ and $I:\BMO\to\BMO(w)$ have norm bounds $c_{T}$ and $c_d[w]_{A_{\infty}}'$, respectively. Concerning sharpness, note that the Hilbert transform of $\chi_{(-1,0)}$ is $\log(x+1)-\log x$ for $x>0$. Since $\log(x+1)$ is bounded on $[0,1]$, the computation proving the sharpness of the embedding $\BMO\hookrightarrow\BMO(w)$ also gives the lower bound
\begin{equation*}
\begin{split}
 \Norm{H\chi_{(-1,0)}}{\BMO(\abs{x}^{-1+\eps})}\geq c/\eps
 &=c[x^{-1+\eps}]_{A_{\infty}}\Norm{\chi_{(-1,0)}}{L^{\infty}} \\
 &\geq c[x^{-1+\eps}]_{A_{\infty}}'\Norm{\chi_{(-1,0)}}{L^{\infty}}.\qedhere
\end{split}
\end{equation*}
\end{proof}

We conclude with the proof of Proposition~\ref{BMO-Ainfty} on the sharp relation of $A_\infty$ and $\BMO$. Note that here we use the larger constant $[w]_{A_\infty}$, not $[w]_{A_\infty}'$.

\begin{proof}[Proof of Proposition~\ref{BMO-Ainfty}]

Let $Q$ be a cube. We estimate
\begin{equation*}
\begin{split}
  \int_Q\abs{\log w-\log c}
  &=\int_{Q\cap\{w\geq c\}}\log\frac{w}{c}+\int_{Q\cap\{w<c\}}\log\frac{c}{w} \\
  &=\int_{Q\cap\{w\geq c\}}\log\frac{w}{c}+\Big(\int_Q-\int_{Q\cap\{w\geq c\}}\Big)\log\frac{c}{w} \\
  &=2\int_{Q\cap\{w\geq c\}}\log\frac{w}{c}+\int_Q\log c+\int_Q\log\frac{1}{w} \\
  &\leq 2\int_{Q\cap\{w\geq c\}}\frac{w}{c}+\abs{Q}\log c+\abs{Q}\log\Big([w]_{A_\infty}\Big/\avgint_Q w\Big).
\end{split}
\end{equation*}
Hence
\begin{equation*}
  \avgint_Q \abs{\log w-\log c}
  \leq \frac{2}{c} \avgint_Q w+\log c+\log[w]_{A_\infty}-\log\Big( \avgint_Q w\Big).
\end{equation*}
Choosing $c=c_Q=2\avgint_Q w$, we get
\begin{equation*}
   \avgint_Q \abs{\log w-\log c_Q}
  \leq 1+\log 2+\log\Big( \avgint_Q w\Big)+\log[w]_{A_\infty}-\log\Big( \avgint_Q w\Big)=\log(2e[w]_{A_\infty}),
\end{equation*}
and this proves that
\begin{equation*}
 \Norm{\log w}{BMO}\leq\log(2e[w]_{A_\infty}).\qedhere
\end{equation*}  
\end{proof}

\begin{remark}
In the last estimate, we cannot replace $[w]_{A_\infty}$ by $[w]_{A_\infty}'$. Indeed, for the two-valued weight $w=t\cdot 1_E+1_{\R\setminus E}$, one readily checks that $\Norm{\log w}{BMO}\eqsim\log t$, whereas Lemma~\ref{lem:wAinftyPrime} shows that also $[w]_{A_\infty}'\lesssim\log t$. Thus $\Norm{\log w}{BMO}\leq\log(c[w]_{A_\infty}')$ would lead to the obvious contradiction that $\log t\leq c+\log\log t$.
\end{remark}

\appendix

\bibliography{weighted}

\begin{thebibliography}{10}

\bibitem{AIS}
Kari Astala, Tadeusz Iwaniec, and Eero Saksman.
\newblock Beltrami operators in the plane.
\newblock {\em Duke Math. J.}, 107(1):27--56, 2001.

\bibitem{AHMTT}
P.~Auscher, S.~Hofmann, C.~Muscalu, T.~Tao, and C.~Thiele.
\newblock Carleson measures, trees, extrapolation, and {$T(b)$} theorems.
\newblock {\em Publ. Mat.}, 46(2):257--325, 2002.

\bibitem{Buckley}
Stephen~M. Buckley.
\newblock Estimates for operator norms on weighted spaces and reverse {J}ensen
  inequalities.
\newblock {\em Trans. Amer. Math. Soc.}, 340(1):253--272, 1993.

\bibitem{CPP}
Pereyra M.~Cristina Chung, Daewon and Carlos P\'erez.
\newblock Quadratic {$A_2$} bounds for commutators of operators with {BMO}
  functions.
\newblock {\em Trans. Amer. Math. Soc.}
\newblock (to appear).

\bibitem{CF}
Ronald~R. Coifman and C.~Fefferman.
\newblock Weighted norm inequalities for maximal functions and singular
  integrals.
\newblock {\em Studia Math.}, 51:241--250, 1974.

\bibitem{CRW-commutators}
Ronald~R. Coifman, Richard Rochberg, and Guido~L. Weiss.
\newblock Factorization theorems for {H}ardy spaces in several variables.
\newblock {\em Ann. of Math. (2)}, 103(3):611--635, 1976.

\bibitem{CMP-ERAMS}
David Cruz-Uribe, Jos{\'e}~Mar{\'{\i}}a Martell, and Carlos P{\'e}rez.
\newblock Sharp weighted estimates for approximating dyadic operators.
\newblock {\em Electron. Res. Announc. Math. Sci.}, 17:12--19, 2010.

\bibitem{CMP}
David Cruz-Uribe, Jos{\'e}~Mar{\'{\i}}a Martell, and Carlos P{\'e}rez.
\newblock Sharp weighted estimates for classical operators.
\newblock {\em Preprint}, 2010.

\bibitem{CMP-book}
David Cruz-Uribe, Jos{\'e}~Mar{\'{\i}}a Martell, and Carlos P{\'e}rez.
\newblock {\em Weights, Extrapolation and the Theory of Rubio de Francia},
  volume 215 of {\em Operator Theory: Advances and Applications}.
\newblock Birkh\"auser, Basel, 2011.
\newblock (to appear).

\bibitem{DGPP}
Oliver Dragi{\v{c}}evi{\'c}, Loukas Grafakos, Mar{\'{\i}}a~Cristina Pereyra,
  and Stefanie Petermichl.
\newblock Extrapolation and sharp norm estimates for classical operators on
  weighted {L}ebesgue spaces.
\newblock {\em Publ. Mat.}, 49(1):73--91, 2005.

\bibitem{DV}
Oliver Dragi{\v{c}}evi{\'c} and Alexander Volberg.
\newblock Sharp estimate of the {A}hlfors-{B}eurling operator via averaging
  martingale transforms.
\newblock {\em Michigan Math. J.}, 51(2):415--435, 2003.

\bibitem{Duo-JFA}
Javier Duoandikoetxea.
\newblock Extrapolation of weights revisited: new proofs and sharp bounds.
\newblock {\em J. Funct. Anal.}, 260:1886--1901, 2011.

\bibitem{GCRdF}
Jos{\'e} Garc{\'{\i}}a-Cuerva and Jos{\'e}~L. Rubio~de Francia.
\newblock {\em Weighted norm inequalities and related topics}, volume 116 of
  {\em North-Holland Mathematics Studies}.
\newblock North-Holland Publishing Co., Amsterdam, 1985.

\bibitem{Hruscev}
Sergei~V. Hru{\v{s}}{\v{c}}ev.
\newblock A description of weights satisfying the {$A_{\infty }$} condition of
  {M}uckenhoupt.
\newblock {\em Proc. Amer. Math. Soc.}, 90(2):253--257, 1984.

\bibitem{HMW}
Richard Hunt, Benjamin Muckenhoupt, and Richard Wheeden.
\newblock Weighted norm inequalities for the conjugate function and {H}ilbert
  transform.
\newblock {\em Trans. Amer. Math. Soc.}, 176:227--251, 1973.

\bibitem{Hytonen:A2}
Tuomas Hyt\"onen.
\newblock The sharp weighted bound for general {Calder\'on-Zygmund} operators.
\newblock Preprint, arXiv:1007.4330 (2010).

\bibitem{HytKai}
Tuomas Hyt\"onen and Anna Kairema.
\newblock Systems of dyadic cubes in a doubling metric space.
\newblock Preprint, arXiv:1012.1985 (2010).

\bibitem{HLMORSU}
Tuomas Hyt\"onen, Michael~T. Lacey, Henri Martikainen, Tuomas Orponen,
  Maria~Carmen Reguera, Eric~T. Sawyer, and Ignacio Uriarte-Tuero.
\newblock Weak and strong type estimates for maximal truncations of
  {C}alder\'on-{Z}ygmund operators on {$A_p$} weighted spaces.
\newblock Preprint, arXiv:1103.5229 (2011).

\bibitem{HLRSUV}
Tuomas Hyt\"onen, Michael~T. Lacey, Maria~Carmen Reguera, Eric~T. Sawyer,
  Ignacio Uriarte-Tuero, and Armen Vagharshakyan.
\newblock Weak and strong type {$A_p$} estimates for {C}alder\'on-{Z}ygmund
  operators.
\newblock Preprint, arXiv:1006.2530 (2010).

\bibitem{HPTV}
Tuomas Hyt\"onen, Carlos P\'erez, Sergei Treil, and Alexander Volberg.
\newblock Sharp weighted estimates for dyadic shifts and the {$A_2$}
  conjecture.
\newblock Preprint, arXiv:1010.0755 (2010).

\bibitem{Journe}
Jean-Lin Journ{\'e}.
\newblock {\em Calder\'on-{Z}ygmund operators, pseudodifferential operators and
  the {C}auchy integral of {C}alder\'on}, volume 994 of {\em Lecture Notes in
  Mathematics}.
\newblock Springer-Verlag, Berlin, 1983.

\bibitem{LMPT}
Michael~T. Lacey, Kabe Moen, Carlos P{\'e}rez, and Rodolfo~H. Torres.
\newblock Sharp weighted bounds for fractional integral operators.
\newblock {\em J. Funct. Anal.}, 259(5):1073--1097, 2010.

\bibitem{LPR}
Michael~T. Lacey, Stefanie Petermichl, and Maria~Carmen Reguera.
\newblock Sharp {$A_2$} inequality for {H}aar shift operators.
\newblock {\em Math. Ann.}, 348(1):127--141, 2010.

\bibitem{Lerner:Ap}
Andrei~K. Lerner.
\newblock Sharp weighted norm inequalities for {L}ittlewood-{P}aley operators
  and singular integrals.
\newblock {\em Adv. in Math.}
\newblock To appear.

\bibitem{LO:personal}
Andrei~K. Lerner and Sheldy Ombrosi.
\newblock Personal communication.

\bibitem{LOP1}
Andrei~K. Lerner, Sheldy Ombrosi, and Carlos P{\'e}rez.
\newblock Sharp {$A_1$} bounds for {C}alder\'on-{Z}ygmund operators and the
  relationship with a problem of {M}uckenhoupt and {W}heeden.
\newblock {\em Int. Math. Res. Not. IMRN}, (6):Art. ID rnm161, 11, 2008.

\bibitem{LOP3}
Andrei~K. Lerner, Sheldy Ombrosi, and Carlos P{\'e}rez.
\newblock {$A_1$} bounds for {C}alder\'on-{Z}ygmund operators related to a
  problem of {M}uckenhoupt and {W}heeden.
\newblock {\em Math. Res. Lett.}, 16(1):149--156, 2009.

\bibitem{LOP2}
Andrei~K. Lerner, Sheldy Ombrosi, and Carlos P{\'e}rez.
\newblock Weak type estimates for singular integrals related to a dual problem
  of {M}uckenhoupt-{W}heeden.
\newblock {\em J. Fourier Anal. Appl.}, 15(3):394--403, 2009.

\bibitem{MP}
Mieczys{\l}aw Masty{\l}o and Carlos P\'erez.
\newblock The maximal operators between banach function spaces.
\newblock Work in progress.

\bibitem{Moen:Two-weight}
Kabe Moen.
\newblock Sharp one-weight and two-weight bounds for maximal operators.
\newblock {\em Studia Math.}, 194(2):163--180, 2009.

\bibitem{Muckenhoupt:Ap}
Benjamin Muckenhoupt.
\newblock Weighted norm inequalities for the {H}ardy maximal function.
\newblock {\em Trans. Amer. Math. Soc.}, 165:207--226, 1972.

\bibitem{MW}
Benjamin Muckenhoupt and Richard~L. Wheeden.
\newblock Weighted bounded mean oscillation and the {H}ilbert transform.
\newblock {\em Studia Math.}, 54(3):221--237, 1975/76.

\bibitem{NRVV}
Fedja Nazarov, Alexander Reznikov, Vasuynin Vasily, and Alexander Volberg.
\newblock {$A_1$} conjecture: weak norm estimates of weighted singular
  operators and bellman functions.
\newblock {\em Personal communication}, 2010.

\bibitem{perez-lecturenotes}
Carlos P{\'e}rez.
\newblock {\em A course on Singular Integrals and weights}.
\newblock Advanced Courses in Mathematics-CRM Barcelona. Birkh\"auser, Basel.
\newblock (to appear).

\bibitem{perez94}
Carlos P{\'e}rez.
\newblock Weighted norm inequalities for singular integral operators.
\newblock {\em J. London Math. Soc. (2)}, 49(2):296--308, 1994.

\bibitem{perez95}
Carlos P{\'e}rez.
\newblock On sufficient conditions for the boundedness of the
  {H}ardy-{L}ittlewood maximal operator between weighted {$L\sp p$}-spaces with
  different weights.
\newblock {\em Proc. London Math. Soc. (3)}, 71(1):135--157, 1995.

\bibitem{PTV}
Carlos P\'erez, Sergei Treil, and Alexander Volberg.
\newblock On {$A_2$} conjecture and corona decomposition of weights.
\newblock Preprint, arXiv:1006.2630 (2010).

\bibitem{PW-JFA}
Carlos P{\'e}rez and Richard~L. Wheeden.
\newblock Uncertainty principle estimates for vector fields.
\newblock {\em J. Funct. Anal.}, 181(1):146--188, 2001.

\bibitem{PTV:Riesz}
S.~Petermichl, S.~Treil, and A.~Volberg.
\newblock Why the {R}iesz transforms are averages of the dyadic shifts?
\newblock In {\em Proceedings of the 6th {I}nternational {C}onference on
  {H}armonic {A}nalysis and {P}artial {D}ifferential {E}quations ({E}l
  {E}scorial, 2000)}, number Vol. Extra, pages 209--228, 2002.

\bibitem{Petermichl:shift}
Stefanie Petermichl.
\newblock Dyadic shifts and a logarithmic estimate for {H}ankel operators with
  matrix symbol.
\newblock {\em C. R. Acad. Sci. Paris S\'er. I Math.}, 330(6):455--460, 2000.

\bibitem{Petermichl:Hilbert}
Stefanie Petermichl.
\newblock The sharp bound for the {H}ilbert transform on weighted {L}ebesgue
  spaces in terms of the classical {$A_p$} characteristic.
\newblock {\em Amer. J. Math.}, 129(5):1355--1375, 2007.

\bibitem{Petermichl:Riesz}
Stefanie Petermichl.
\newblock The sharp weighted bound for the {R}iesz transforms.
\newblock {\em Proc. Amer. Math. Soc.}, 136(4):1237--1249, 2008.

\bibitem{PV}
Stefanie Petermichl and Alexander Volberg.
\newblock Heating of the {A}hlfors-{B}eurling operator: weakly quasiregular
  maps on the plane are quasiregular.
\newblock {\em Duke Math. J.}, 112(2):281--305, 2002.

\bibitem{sawyer82b}
Eric~T. Sawyer.
\newblock A characterization of a two-weight norm inequality for maximal
  operators.
\newblock {\em Studia Math.}, 75(1):1--11, 1982.

\bibitem{Stein-LlogL}
E.~M. Stein.
\newblock Note on the class {$L$} {${\rm log}$} {$L$}.
\newblock {\em Studia Math.}, 32:305--310, 1969.

\bibitem{dlT}
Alberto de~la Torre.
\newblock Personal communication.

\bibitem{Vagharshakyan}
Armen Vagharshakyan.
\newblock Recovering singular integrals from {H}aar shifts.
\newblock {\em Proc. Amer. Math. Soc.}, 138(12):4303--4309, 2010.

\bibitem{Wilson:87}
J.~Michael Wilson.
\newblock Weighted inequalities for the dyadic square function without dyadic
  {$A_\infty$}.
\newblock {\em Duke Math. J.}, 55(1):19--50, 1987.

\bibitem{Wilson:89}
J.~Michael Wilson.
\newblock Weighted norm inequalities for the continuous square function.
\newblock {\em Trans. Amer. Math. Soc.}, 314(2):661--692, 1989.

\bibitem{Wilson-LNM}
Michael Wilson.
\newblock {\em Weighted {L}ittlewood-{P}aley theory and exponential-square
  integrability}, volume 1924 of {\em Lecture Notes in Mathematics}.
\newblock Springer, Berlin, 2008.

\end{thebibliography}

\bibliographystyle{plain}

\end{document}